\newcommand{\diam}{\textrm{diam}}
\newcommand{\dlog}{\textrm{dlog}}
\newcommand{\var}{\textrm{var}}
\newtheorem{theorem}{Theorem}
\newtheorem{corollary}{Corollary}
\newtheorem{lemma}{Lemma}
\newtheorem{remark}{Remark}
\newtheorem{definition}{Definition}
\newtheorem{question}{Question}
\newtheorem{example}{Example}
\newtheorem{conjecture}{Conjecture}
\renewcommand{\thethmx}{\Alph{thmx}}
\title[Meromorphic functions sharing four values]{On a question of Gary G. Gundersen concerning meromorphic functions sharing three distinct values IM and a fourth value CM }
\author{Xiao-Min Li$^1$*, Qing-Fei Zhai$^2$, Hong-Xun Yi$^3$}
\address{$^{1,2}$ Department of Mathematics, Ocean University of China, Qingdao, Shandong 266100, P. R. China
\vskip 2mm \hspace{1.5mm} Email:{\sf lixiaomin@ouc.edu.cn(X.-M. Li), 1476157537@qq.com(Q.-F. Zhai).}
\vskip 2mm $^3$ Department of Mathematics, Shandong University, Jinan, Shandong 250199, P. R. China
\vskip 2mm \hspace{1.5mm} Email:{\sf hxyi@sdu.edu.cn(H.-X. Yi).}
}
\thanks{{\sf Corresponding author: Xiao-Min Li}}
\thanks{{\sf 2010 Mathematics Subject Classification.} Primary 30D35; Secondary 30D30.}
\thanks{Project supported in part by the NSF of Shandong Province, China (No. ZR2019MA029) and
the FRFCU(No.3016000/842464005).}
\thanks{{\sf Keywords.} Nevanlinna$'$s theory; Meromorphic functions; Shared values; Uniqueness theorems.}
\begin{document}
\begin{abstract}
In 1992, Gundersen \cite {Gundersen1992} proposed the following famous open question: if two non-constant meromorphic functions share three values
IM and share a fourth value CM, then do the functions necessarily share all four values CM? The open question is a long-standing question in the studies of   the Nevanlinna$'$s value distribution theory of meromorphic functions, and has not been completely resolved by now. In this paper, we prove the following result: suppose that $f$ and $g$ are two distinct non-constant meromorphic functions, and one of $f$ and $g$ has finite order. If $f$ and $g$ share $a_1,$ $a_2,$ $a_3$ IM and $a_4$ CM, where $a_1,$ $a_2,$ $a_3,$ $a_4$ are four distinct complex values in the extended complex plane, then $f$ and $g$ share $a_1,$ $a_2,$ $a_3$ and $a_4$ CM. Applying the main result obtained in this paper, we resolve a question proposed in Gundersen \cite[p.458]{Gundersen1979}  concerning the nonexistence of two distinct non-constant meromorphic functions sharing three distinct values DM and a fourth value CM for finite order meromorphic functions. The obtained result also extends the corresponding result in Mues\cite[pp.109-117]{Mues1989F1} concerning the nonexistence of two distinct and non-constant entire functions that share three distinct finite values DM.  Examples are provided to show that the main results obtained in this paper, in a sense, are best possible.
\end{abstract}
\vskip 2mm
\par
\maketitle
\section{Introduction and main results}
\vskip 2mm
\par In this paper, by meromorphic functions we will always mean meromorphic functions in the complex plane. We adopt the standard notations in the Nevanlinna theory of meromorphic functions as explained in \cite{Hayman1964, Laine1993, YangYi2003}. Throughout this paper, we denote by $E\subset[0, +\infty)$ a set of finite linear measure. For a meromorphic function $f$ in the complex plane we denote by $S(r,f)$ any quantity satisfying $S(r,f)=o(T(r,f)),$ as $r \not\in E$ and $r\rightarrow\infty.$ Following Halburd-Korhonen \cite[p.466]{HalburdKorhonen2006}, we define the notion of the small function of a non-constant meromorphic function in the complex plane as follows: let $f$ be a non-constant meromorphic function. Next we denote by $S(f)$ the set of all meromorphic functions $\alpha$ in the complex plane such that $T(r, \alpha) = o(T(r,f))$ for all $r\in [0, +\infty)$ possibly outside
a set $E\subset [0, +\infty)$ of finite linear measure. Functions in the set $S(f)$ are called small compared to $f,$ or slowly moving with respect to $f,$  or a small function of $f$ for short.
\vskip 2mm
\par
Let $f$ and $g$ be two non-constant meromorphic functions in the complex plane, and let $a$ be a value in the extended plane. We say that $f$ and $g$ share the value $a$ CM, provided that $f$ and $g$ have the same set of $a$-points, and each common $a$-point of $f$ and $g$  has the same multiplicities as the $a$-point of $f$ and $g.$ We say that $f$ and $g$ share the value $a$ IM, provided that $f$ and $g$ have the same set of $a$-points ignoring multiplicities. We say that $f$ and $g$ share the value $a$ DM, provided that $f$ and $g$ share $a$ IM, and at least one common $a$-point of $f$ and $g$ has different multiplicities related to $f$ and $g$ (cf.\cite{Gundersen1979,Gundersen1983}). Here we mention that Mues \cite {Mues1989F1} and Yang-Yi \cite{YangYi2003} defined the notion of the DM shared value as follows: we say that $f$ and $g$ share the value $a$ DM, provided that  $f$ and $g$ share $a$ IM, and that $f-a$ and $g-a$ have no common zeros of the same multiplicities. For a non-constant meromorphic function $f,$ we denote by $\mu(f)$ and $\rho(f)$ respectively the lower
order and the order of $f,$ the relevant definitions can be found, for example, in \cite{Hayman1964, Laine1993, YangYi2003}. For convenience, we recall them below:
\begin{definition}\label{definition1.1} \rm{} For a non-constant meromorphic function $f,$ the lower
order and the order of $f,$ denoted as $\mu(f)$ and $\rho(f)$ respectively are defined as
\begin{equation}\label{definition1.1}
\mu(f)=\liminf\limits_{r\rightarrow\infty}\frac{\log T(r,f)}{\log r} \, \, \text{and} \, \, \rho(f)=\limsup\limits_{r\rightarrow\infty}\frac{\log T(r,f)}{\log r} \, \, \text{ respectively}.
\end{equation}
\end{definition}
Following Edrei-Fuchs \cite[p.293]{EdreiFuchs1959} , we say that a meromorphic function $f$ in the complex plane is of regular growth, provided $\mu(f)=\rho(f).$ Here we mention that both of $\mu(f)$ and $\rho(f)$ may be $+\infty.$
\vskip 2mm
\par In 1926, Nevanlinna \cite{Nevanlinna1926} proved the following theorems that are the famous five-value theorem and the four-value theorem respectively:
\vskip 2mm
\par
\begin{theorem}\rm{}(\cite[p.109]{Nevanlinna1926}) \label{TheoremA}
If $f$ and $g$ are non-constant meromorphic functions that share five values IM, then $f=g$.
\end{theorem}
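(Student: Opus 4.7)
The plan is to argue by contradiction, assuming $f\not\equiv g$, and to derive a numerical inequality from the Second Main Theorem that is incompatible with the characteristic function growth. First I would reduce to the case where all five shared values $a_1,\ldots,a_5$ are finite complex numbers: if one of the $a_j$ equals $\infty$, I would pre-compose with a suitable Möbius transformation $\varphi$ and replace $(f,g)$ by $(\varphi\circ f,\varphi\circ g)$, which preserves non-constancy, the characteristic $T(r,\cdot)$ up to $O(1)$, and the IM-sharing property.

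Next, the core step is to apply the Second Main Theorem with the five target values to each of $f$ and $g$ separately, obtaining
\begin{equation*}
3T(r,f)\le \sum_{j=1}^{5}\overline{N}\!\left(r,\frac{1}{f-a_j}\right)+S(r,f),\qquad 3T(r,g)\le \sum_{j=1}^{5}\overline{N}\!\left(r,\frac{1}{g-a_j}\right)+S(r,g).
\end{equation*}
Because $f$ and $g$ share each $a_j$ IM, the two reduced counting functions on the right are equal for every $j$. Adding the two inequalities therefore yields
\begin{equation*}
3\bigl(T(r,f)+T(r,g)\bigr)\le 2\sum_{j=1}^{5}\overline{N}\!\left(r,\frac{1}{f-a_j}\right)+S(r,f)+S(r,g).
\end{equation*}

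The final step is to exploit the fact that every common $a_j$-point of $f$ and $g$ is a zero of the (by assumption) non-vanishing function $f-g$. Since the five values $a_j$ are distinct, the corresponding $a_j$-point sets are pairwise disjoint, so
\begin{equation*}
\sum_{j=1}^{5}\overline{N}\!\left(r,\frac{1}{f-a_j}\right)\le N\!\left(r,\frac{1}{f-g}\right)\le T(r,f-g)+O(1)\le T(r,f)+T(r,g)+O(1).
\end{equation*}
Substituting this into the previous inequality gives $3(T(r,f)+T(r,g))\le 2(T(r,f)+T(r,g))+S(r,f)+S(r,g)$, hence $T(r,f)+T(r,g)\le S(r,f)+S(r,g)$, which contradicts the non-constancy of $f$ and $g$ as $r\to\infty$ outside the exceptional set $E$.

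I do not anticipate any real obstacle here; this is the classical Nevanlinna argument and it is essentially forced by the three sources of slack in the Second Main Theorem (the factor $q-2=3$ on the left, the IM-sharing, and the zero-counting bound for $f-g$). The only places requiring minor care are the reduction step at $\infty$ and the bookkeeping of the $S(r,f)$ and $S(r,g)$ terms to ensure they remain negligible off a set of finite linear measure.
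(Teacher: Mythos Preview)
Your argument is correct and is precisely the classical proof of Nevanlinna's five-value theorem. The paper itself does not supply a proof of this statement; it merely records it as Theorem~\ref{TheoremA} with a citation to Nevanlinna's original 1926 paper, so there is no alternative approach to compare against.
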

\vskip 2mm
\par
\begin{theorem}\rm{}(\cite[p.122]{Nevanlinna1926})\label{TheoremB}
Let $f$ and $g$ be distinct and non-constant meromorphic functions, and let $a_1,$ $a_2,$ $a_3,$ $a_4$ be four distinct complex values in the extended complex plane. If $f$ and $g$ share $a_1,$ $a_2,$ $a_3,$ $a_4$ CM, then $f$ is a M\"{o}bius transformation of $g$, two of the shared values, say $a_1$ and $a_2$, are Picard exceptional values, and the cross ratio $\left(a_1, a_2, a_3, a_4\right)=-1.$
\end{theorem}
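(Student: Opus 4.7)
The plan is to follow Nevanlinna's classical strategy in three stages. After post-composing $f$ and $g$ with a common Möbius transformation (which preserves CM sharing), I may normalize so that $a_1=0$, $a_2=1$, $a_3=\infty$, and $a_4=c$ for some $c\in\mathbb{C}\setminus\{0,1\}$; the task is then to show that $f=L(g)$ for a nonidentity Möbius $L$, that exactly two of the $a_i$ are Picard exceptional for both functions, and that $c$ is forced so that the cross ratio equals $-1$. The first input is Nevanlinna's second main theorem applied to $f$ (and to $g$) against the four shared values: combined with the identities $\bar N(r,1/(f-a_i))=\bar N(r,1/(g-a_i))$ and $N(r,1/(f-a_i))=N(r,1/(g-a_i))$ coming from CM sharing, it gives $T(r,f)=T(r,g)+S(r,f)$, hence $S(r,f)=S(r,g)$, and forces for each $i$ the defect-type equality $N(r,1/(f-a_i))=\bar N(r,1/(f-a_i))+S(r,f)$, so that multiple common $a_i$-points are negligible in counting.

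The heart of the argument is the construction of small auxiliary functions from the CM hypothesis. For each finite $a_i$, CM sharing at both $\infty$ and $a_i$ implies that the ratio
\[
\beta_i:=\frac{f-a_i}{g-a_i}
\]
has neither zeros nor poles, so $\beta_i$ is an entire function of the form $e^{h_i}$ for some entire $h_i$; moreover the logarithmic derivative lemma applied to
\[
\frac{\beta_i'}{\beta_i}=\frac{f'}{f-a_i}-\frac{g'}{g-a_i}
\]
yields $T(r,\beta_i'/\beta_i)=S(r,f)$. The $\beta_i$ are coupled through the relations $f=a_i+\beta_i(g-a_i)$: solving two of these for $g$ produces an explicit rational expression of $g$ in $(\beta_i,\beta_j)$, and consistency with the other pairs (together with the relation at $a_4$) gives a nontrivial algebraic identity among the $\beta_i$. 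Combining this identity with the smallness of the $\beta_i'/\beta_i$ forces a Borel-type relation among the exponents $h_i$, whose upshot is that the $\beta_i$ collapse to a single exponential (up to constants), which in turn translates into a Möbius identity $f=L(g)$ with $L\neq\mathrm{id}$.

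Once $f=L(g)$ holds, CM sharing forces $L$ to permute $\{a_1,a_2,a_3,a_4\}$. A nonidentity Möbius transformation has at most two fixed points on $\mathbb{C}_\infty$, so $L$ moves at least two of the shared values; but an $a_i$ moved by $L$ cannot be attained by $g$ at any point $z_0$, since then $f(z_0)=L(a_i)\neq a_i$ would contradict the CM sharing at $z_0$. Hence every moved $a_i$ is Picard exceptional for $g$ (and for $f$), and by Picard's theorem there are exactly two such exceptional values. Normalizing the two Picard values to $0$ and $\infty$ renders $L$ an involution of the form $w\mapsto\mu/w$, and the fixed-point conditions on the remaining two shared values $a$ and $a'$ force $a'=-a$ and $\mu=a^2$; the four shared values are therefore $\{0,\infty,a,-a\}$, whose cross ratio is $-1$. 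The main obstacle in this plan is the middle stage: distilling from the smallness of the $\beta_i'/\beta_i$ the precise multiplicative structure of the $\beta_i$ themselves requires the standard but delicate machinery of Borel-type exponential identities, and this is where Nevanlinna's argument demands its most careful computation.
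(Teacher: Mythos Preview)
The paper does not give its own proof of this theorem: it is stated as the classical Nevanlinna four-value theorem with a citation to \cite[p.122]{Nevanlinna1926}, and is used only as background. There is therefore nothing in the paper to compare your argument against.

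That said, your outline is the standard Nevanlinna route and is essentially correct. Two small points deserve tightening. First, the claim that ``CM sharing forces $L$ to permute $\{a_1,a_2,a_3,a_4\}$'' is stated without justification and is stronger than what you actually use; what your argument needs (and what follows cleanly) is only that each $a_i$ not fixed by $L$ is Picard exceptional for $g$, and then that $L$ must swap the two exceptional values $a_1,a_2$ --- the latter because the Picard exceptional values of $f=L(g)$ are $\{L(a_1),L(a_2)\}$, which by the symmetric sharing hypothesis must coincide with $\{a_1,a_2\}$, and $L$ fixing all four $a_i$ would force $L=\mathrm{id}$. Second, as you yourself flag, the passage from the smallness of the $\beta_i'/\beta_i$ to the collapse of the $\beta_i$ into a single exponential is the genuine technical step; in a full proof one writes the compatibility relation $(a_j-a_i)\beta_j + (a_i-a_k)\beta_k + (a_k-a_j)\beta_i\beta_j\beta_k^{-1}\cdot(\text{something})=0$ (or an equivalent three-term exponential identity) and invokes Borel's lemma on linear combinations of units, and this should be spelled out rather than gestured at.
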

\vskip 2mm
\par In 1976, L. Rubel posed the following question(cf.\cite{Gundersen1979}): whether do the four distinct CM shared values be replaced with four distinct IM shared values or not in Theorem \ref{TheoremB}? In this direction, Gundersen \cite{Gundersen1979} and  Gundersen \cite{Gundersen1983} proved the following results respectively, where Theorem \ref{TheoremD} improved Theorems \ref{TheoremB} and \ref{TheoremC}, and Theorem \ref{TheoremC} improved Theorem \ref{TheoremB}:
\vskip 2mm
\par
\begin{theorem}\rm{}(\cite[Theorem 1]{Gundersen1979})\label{TheoremC}
If $f$ and $g$ share three values CM and share a fourth value IM, then they share all four values CM.
\end{theorem}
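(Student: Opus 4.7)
The plan is to reduce the problem to a canonical form via a Möbius transformation, exploit the three CM sharings to realize $f$ and $g$ as explicit combinations of two entire exponentials, and then apply a log-derivative auxiliary function together with Nevanlinna theory to show that the multiplicities at the IM-shared fourth value must agree, upgrading IM to CM.

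After post-composing $f$ and $g$ by a common Möbius transformation, I may assume $a_1, a_2, a_3 = 0, 1, \infty$ are shared CM and $a_4 = c$ is a finite complex number with $c \notin \{0,1\}$ shared IM. The CM sharing at $0$ and $\infty$ makes $f/g$ entire and non-vanishing, so $f/g = e^\alpha$ for some entire $\alpha$; similarly, CM sharing at $1$ and $\infty$ gives $(f-1)/(g-1) = e^\beta$ for some entire $\beta$. Solving yields
\begin{equation*}
g \;=\; \frac{1-e^\beta}{e^\alpha - e^\beta}, \qquad f \;=\; \frac{e^\alpha\,(1-e^\beta)}{e^\alpha - e^\beta},
\end{equation*}
provided $e^\alpha \not\equiv e^\beta$; the alternative $e^\alpha \equiv e^\beta$ forces $f \equiv g$, contrary to hypothesis. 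Evaluating $f = g\,e^\alpha$ at any shared $c$-point $z_0$ yields $e^{\alpha(z_0)} = 1$, and symmetrically $e^{\beta(z_0)} = 1$.

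The key auxiliary function is
\begin{equation*}
\Phi \;=\; \frac{f'}{f-c} - \frac{g'}{g-c},
\end{equation*}
which is the logarithmic derivative of $F = (f-c)/(g-c)$. By the lemma on the logarithmic derivative, $m(r,\Phi) = S(r,F) = S(r,f)$. A Laurent expansion at any shared $c$-point where $f$ has multiplicity $p$ and $g$ has multiplicity $q$ shows that $\Phi$ is regular there when $p = q$ and has a simple pole with residue $p-q$ when $p \neq q$; consequently $N(r,\Phi)$ equals the reduced counting function $\overline{N}_{*}(r,c;f,g)$ of shared $c$-points of unequal multiplicity. Re-expressing $\Phi$ via the formulas above as a rational combination of $\alpha', \beta', e^\alpha, e^\beta$ and combining with the Second Main Theorem applied to $g$ with the four targets $0, 1, \infty, c$, one obtains the matching bound $\overline{N}_{*}(r, c; f, g) = S(r,f)$. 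Sharpening this to the strict identity $\overline{N}_{*} \equiv 0$ (equivalent to CM sharing at $c$) amounts to showing $\Phi \equiv 0$; this forces $(f-c)/(g-c)$ to be a nonzero constant $K$, and a short case analysis against $f/g = e^\alpha$ rules out $K \neq 1$ by the CM sharing at $0$ and $\infty$, while $K = 1$ combined with the shape of $F$ forces all shared $c$-point multiplicities to match.

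The principal obstacle is this last step: promoting the counting estimate $\overline{N}_{*}(r,c;f,g) = S(r,f)$ into the pointwise identity needed for CM sharing. This demands either a refined SMT argument against a judiciously chosen auxiliary, or a Borel-type lemma exploiting the exponential structure of $\alpha$ and $\beta$ to rule out even finitely many multiplicity mismatches. Handling the degenerate sub-cases (for instance, $\alpha$ or $\beta$ polynomial, or $e^\alpha$ or $e^\beta$ constant) contributes most of the technical complexity, since the explicit representation $g = (1-e^\beta)/(e^\alpha - e^\beta)$ must be leveraged to close each sub-case against the hypothesis $f \not\equiv g$.
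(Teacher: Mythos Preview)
The paper does not contain its own proof of this statement; Theorem~\ref{TheoremC} is quoted from Gundersen (1979) as background. So there is no in-paper argument to compare against, and I evaluate your proposal on its own merits.

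There is a genuine error in the strategy. You assert that obtaining CM sharing at $c$ ``amounts to showing $\Phi \equiv 0$''. This is false: CM sharing at $c$ is equivalent to $\Phi = (\log F)'$ being \emph{pole-free}, not to $\Phi$ vanishing identically. In the canonical Nevanlinna pair $f = e^{z}$, $g = e^{-z}$ with $c = -1$, all four values $0,1,-1,\infty$ are shared CM, yet a direct computation gives $\Phi = f'/(f+1) - g'/(g+1) \equiv 1$. Hence the target $\Phi \equiv 0$ is both unnecessary and generically unattainable, and the subsequent case analysis (forcing $(f-c)/(g-c)$ to be a constant $K$, ruling out $K\neq 1$, etc.) is aimed at the wrong conclusion.

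Beyond this misidentification, the argument is explicitly incomplete: you reach at best the estimate $\overline{N}_{*}(r,c;f,g)=S(r,f)$ and then flag as ``the principal obstacle'' the passage from $S(r,f)$ to identically zero --- but that passage is precisely the content of the theorem, so nothing has been proved. Even the $S(r,f)$ bound is only sketched (``re-expressing $\Phi$ \ldots\ combining with the Second Main Theorem''). Your setup through the explicit representation $g=(1-e^{\beta})/(e^{\alpha}-e^{\beta})$ and the observation $e^{\alpha(z_0)}=e^{\beta(z_0)}=1$ at every $c$-point is correct and is indeed the natural starting point; what is missing is a direct local comparison of the vanishing orders of $f-c$ and $g-c$ at such $z_0$ using this representation (rather than the global auxiliary $\Phi$), which is how Gundersen's original argument proceeds.
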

\begin{theorem}\rm{}(\cite[Theorem 1]{Gundersen1983})\label{TheoremD}
If two non-constant meromorphic functions share two values CM and share two other values IM, then the functions share all four values CM.
\end{theorem}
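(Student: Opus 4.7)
The plan is to combine a Möbius normalization with Nevanlinna's second main theorem and the lemma on logarithmic derivatives. First, by composing with Möbius transformations on the target, I may assume the four shared values are $0, 1, c, \infty$, with $f$ and $g$ sharing $0$ and $\infty$ CM and the other two values $1$ and $c$ IM. Because $f$ and $g$ share both $0$ and $\infty$ CM, the quotient $f/g$ is a nowhere-zero, nowhere-infinite meromorphic function, so $f/g = e^{\alpha}$ for some entire function $\alpha$; in particular $f$ and $g$ have identical zero divisors and identical pole divisors (with multiplicity).

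Next I would squeeze out the near-equality $T(r,f) = T(r,g) + S(r,f)$ together with the assertion that multiple $1$-points and $c$-points are negligible. Applying Nevanlinna's second main theorem to $f$ with the four values $0, 1, c, \infty$ gives
\begin{equation*}
2T(r,f) \le \overline{N}(r,f) + \overline{N}\!\left(r,\tfrac{1}{f}\right) + \overline{N}\!\left(r,\tfrac{1}{f-1}\right) + \overline{N}\!\left(r,\tfrac{1}{f-c}\right) + S(r,f),
\end{equation*}
and the analogous inequality for $g$. Since all four values are shared at least IM, the four reduced counting functions coincide for $f$ and $g$; this forces $T(r,f)$ and $T(r,g)$ to grow at the same rate and forces $N - \overline{N}$ at each shared value to be $S(r,f)$. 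Consequently, all but an $S(r,f)$-amount of $1$- and $c$-points are simple for both functions.

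The heart of the proof is to upgrade the IM sharing at $1$ and $c$ to CM sharing. I would introduce an auxiliary function of logarithmic-derivative type, such as
\begin{equation*}
H = \frac{f'}{f(f-1)(f-c)} - \frac{g'}{g(g-1)(g-c)}.
\end{equation*}
Expanded by partial fractions, $H$ becomes a $\mathbb{C}$-linear combination of $f'/f,\, f'/(f-1),\, f'/(f-c)$ and their $g$-analogues, so $m(r,H) = S(r,f)$ by the lemma on logarithmic derivatives. At a common zero of $f$ and $g$ (shared CM), the residues of the $f$- and $g$-summands agree and cancel; the same occurs at the common poles. At a common $1$- or $c$-point that is shared only IM, the residues may fail to cancel, but by the previous step the total contribution of such mismatched points to $N(r,H)$ is $S(r,f)$. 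Hence $T(r,H) = S(r,f)$; a companion auxiliary function of the same flavour (e.g.\ weighted differently in the partial-fraction coefficients) is then played against $H$ to force $H \equiv 0$, which on integration yields that the $f$- and $g$-multiplicities must coincide at each $1$-point and $c$-point.

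The main obstacle is this third step. With only two CM shared values, the cancellation afforded by $H$ is much more delicate than in the proof of Theorem \ref{TheoremC}, where three CM values leave only a single source of discrepancy in any such auxiliary function. One must combine several auxiliary functions and exploit the near-simplicity of the shared $1$- and $c$-points in tandem, and one must address the possibility that $e^{\alpha}$ is transcendental and hence not a small function of $f$; this is handled by playing the identity $f = g e^{\alpha}$ against the two IM sharings to rule out non-trivial $\alpha$. Once $H \equiv 0$ is achieved, reconstructing the equality of multiplicities and completing the proof is largely formal.
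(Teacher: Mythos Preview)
The paper does not supply its own proof of this theorem; it is quoted as a background result from Gundersen \cite{Gundersen1983}. So there is no in-paper argument to compare against, but your proposal has a genuine gap that is worth naming.

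The error is in step 2. From the second main theorem applied to $f$ and $g$, together with the comparison through $N(r,1/(f-g))$, what you actually obtain is Lemma~\ref{Lemma2.2}(ii) and Lemma~\ref{Lemma2.3}(ii): one has $2T(r,f)=\sum_j\overline{N}(r,1/(f-a_j))+S(r,f)$, and the \emph{common multiple} points (those where both $f-a_j$ and $g-a_j$ vanish to order $\ge 2$) contribute only $S(r,f)$. This does \emph{not} say that $N-\overline{N}$ is $S(r,f)$ at each shared value, nor that almost all $1$- and $c$-points are simple for both functions. Gundersen's own 4IM example (Example~\ref{Example1.1}) shows this can fail: there every pole of $f$ is double while every pole of $g$ is simple, so $(N-\overline{N})(r,f)\asymp T(r,f)$. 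What Lemma~\ref{Lemma2.3}(ii) gives is only that $\min(p,q)=1$ outside an $S$-set; points of type $(1,q)$ or $(p,1)$ with $p,q\ge 2$ are not controlled. These are exactly the points where your $H$ acquires a (simple) pole, so the assertion $N(r,H)=S(r,f)$ assumes precisely what the theorem is trying to establish.

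The remedy, and the device Gundersen actually uses (it is also equation \eqref{eq3.3} in the present paper, for the related $3$IM$+1$CM problem), is to take instead
\[
\varphi=\frac{f''}{f'}-\sum_{j}\frac{f'}{f-a_j}-\frac{g''}{g'}+\sum_{j}\frac{g'}{g-a_j}.
\]
At a common $a_j$-point with multiplicities $(p,q)$ the residues of the $f$-block and the $g$-block are $(p-1)-p=-1$ and $-(q-1)+q=+1$ respectively, so they cancel \emph{regardless} of whether $p=q$. Hence $\varphi$ is holomorphic at every shared-value point, its only poles come from $N_0(r,1/f')+N_0(r,1/g')=S(r,f)$, and $T(r,\varphi)=S(r,f)$ without any a priori control on the DM points. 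Integrating gives a multiplicative relation $\dfrac{f'\,g(g-1)(g-c)}{g'\,f(f-1)(f-c)}=e^{\alpha}$ (up to a small factor), and it is at \emph{this} stage that the two CM hypotheses are spent to force $e^{\alpha}$ to be constant and then equal to $1$. Your $H$ skips the $f''/f'$ and $g''/g'$ terms, which is exactly what makes the residues fail to cancel at DM points; once you include them, the outline you sketch in your final paragraph becomes the actual argument.
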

\vskip 2mm
\par Gundersen \cite{Gundersen1979} gave the following famous example that shows that the L.Rubel's question above is negative:
\vskip 2mm
\par
\begin{example}\rm{}(\cite[pp.458-459]{Gundersen1979})\label{Example1.1}
Let $f=\frac{e^h+1}{(e^h-1)^2}$ and $g=\frac{(e^h+1)^2}{8(e^h-1)},$ where $h$ is a non-constant entire function, we can easily verify that $f$ and $g$ share $0,$ $1,$ $\infty,$ $-1/8$ DM, and $f$ is not a M\"{o}bius transformation of $g.$
\end{example}
\vskip 2mm
\par Based upon Example \ref{Example1.1} and Theorem \ref{TheoremD}, Gundersen (\cite{Gundersen1979, Gundersen1983, Gundersen1992}) proposed  the following famous question:
\vskip 2mm
\par
\begin{question}\rm{}(\cite{Gundersen1979, Gundersen1983, Gundersen1992})\label{Question1.1}
 If two non-constant meromorphic functions share three values IM and share a fourth value CM, then do the functions necessarily share all four values CM?
\end{question}
\vskip 2mm
\par Question \ref{Question1.1} is the main open question in the theory of meromorphic functions that share four values.  This open question appears to be difficult and has not been completely resolved by now. In the past several decades, there are many partial results on Question \ref{Question1.1}. For the ease of the following narrations, we introduce the following notation: let $f$ and $g$ be two non-constant meromorphic functions, and let $a$ be a value in the extended complex plane. Suppose that $f$ and $g$ share $a$ IM. Next we denote by $\overline {N}_E(r,a;f,g)$ the counting function of those common $a$-points of $f$ and $g$ in $|z|<r$ where each point in $\overline {N}_E(r,a;f,g)$ has the same multiplicities regard to $f$ and $g,$ and each point in $\overline {N}_E(r,a;f,g)$ is counted only once. We define
\begin{equation}\nonumber
\tau(a)=\liminf\limits_{r\rightarrow\infty}\frac{\overline {N}_E(r,a;f,g)}{\overline{N}\left(r,\frac{1}{f-a}\right)} \, \, \text{if} \, \, \overline{N}\left(r,\frac{1}{f-a}\right)\neq 0
\end{equation}
for the large positive number $r,$ where $\overline{N}\left(r,\frac{1}{f-a}\right)=\overline{N}(r,f)$ if $a=\infty.$ From the definition we see that $0\leq \tau(a)\leq 1.$ We mention that   $\tau(a)$ is defined as $\tau(a)=1,$ when $f$ and $g$ share $a$ CM. In 1989, Mues \cite[Theorem 1]{Mues1989} proved that if $f$ and $g$ are two distinct and non-constant meromorphic functions in the complex plane that share four distinct values in the extended complex plane, and one of the four distinct values is shared by $f$ and $g$ CM, and $\tau(b)>\frac{2}{3}$ for another value $b$ of the four distinct shared values, then $f$ and $g$ share all the four distinct values CM. In 1993, Wang \cite{Wang1993} improved \cite[Theorem 1]{Mues1989} and proved that if $f$ and $g$ are two distinct and non-constant meromorphic functions in the complex plane that share four distinct values in the extended complex plane, and two of the four distinct values, say $a$ and $b$ satisfy  $\tau(a)>\frac{4}{5}$ and $\tau(b)>\frac{4}{5},$ then $f$ and $g$ share all the four distinct values CM. As far as we know, there are other research works concerning Question \ref{Question1.1} and the uniqueness question of meromorphic functions that share four distinct values in the complex plane, such as Mues \cite{Mues1989F1,  Mues1995},  Gundersen \cite{Gundersen1992}, Reinders \cite{Reinders1993}, Ueda \cite{Ueda1988},  Wang\cite {Wang2001}, Steinmetz \cite{Steinmetz1988, Steinmetz2012}, Huang\cite{Huang2004} and Huang-Du\cite{HuangDu2004}, etc. In addition, Ishizaki\cite{Ishizaki2000}, Li \cite{Li2001} and Yao \cite{Yao2003} studied the uniqueness question of non-constant meromorphic functions sharing four distinct small functions. We recall the following  partial results in Gundersen \cite[Theorem 1]{Gundersen1992} and Li-Yi \cite{LiYi2007} on the open question respectively:
\vskip 2mm
\par
\begin{theorem}\rm{}(\cite[Theorem 1]{Gundersen1992})\label{TheoremE}
Let $f$ and $g$ be two non-constant meromorphic functions that share $a_l,$ $a_2,$ $a_3$
IM and $a_4$ CM, where $a_1,$ $a_2,$ $a_3$ and $a_4$ are four distinct complex values in the extended complex plane. Suppose that there exist some real constant $\lambda >4/5$ and some set $I \subset(0, +\infty)$ that has infinite linear measure such that $N\left(r,\frac{1}{f-a_4}\right)/T(r,f)\geq \lambda$ for all $r \in I.$ Then $f$ and $g$ share all four values CM. Here $N\left(r,\frac{1}{f-a_4}\right)=N(r,f)$ when $a_4=\infty.$
\end{theorem}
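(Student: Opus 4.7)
The plan is to reduce to the case $a_4=\infty$ by a Möbius transformation, so that $f$ and $g$ share the three finite values $a_1,a_2,a_3$ IM and share their poles CM, and to construct a single auxiliary function whose Nevanlinna growth measures exactly the failure of CM-sharing at $a_1,a_2,a_3$. The natural choice is
\begin{equation*}
\Phi \;=\; \frac{f'}{(f-a_1)(f-a_2)(f-a_3)} \;-\; \frac{g'}{(g-a_1)(g-a_2)(g-a_3)}.
\end{equation*}
By partial fractions, each summand is a linear combination of logarithmic derivatives $f'/(f-a_i)$ or $g'/(g-a_i)$, so the logarithmic derivative lemma gives $m(r,\Phi)=S(r,f)+S(r,g)$. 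A local computation shows $\Phi$ is analytic at every common pole (using CM-sharing at $\infty$, the leading singular terms cancel, yielding in fact a zero of order $2p-1$ at a common pole of order $p$) and also at every common $a_i$-point ($i=1,2,3$) where $f$ and $g$ have the same multiplicity, since both summands are simple poles with equal residue. The only possible poles of $\Phi$ are therefore the common $a_i$-points where the multiplicities of $f$ and $g$ differ, and these are simple.

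Next I would split into two cases. If $\Phi\equiv 0$, integrating the identity $f'/F=g'/G$ forces a relation of the form $F=cG$ for some constant $c$, i.e.\ $\prod_i(f-a_i)=c\prod_i(g-a_i)$; together with the CM-sharing at $\infty$ and the IM-sharing at $a_1,a_2,a_3$, one checks directly that $c=1$ and hence $f\equiv g$ or $f$ and $g$ share all four values CM. Suppose then $\Phi\not\equiv 0$. Writing $\overline N_{(2}(r,a_i;f)$ for the counting function (without multiplicity) of $a_i$-points of $f$ of multiplicity at least two, and letting $N^{*}(r,a_i)$ denote the counting function of common $a_i$-points where the multiplicities of $f$ and $g$ differ, we have on the one hand
\begin{equation*}
N^{*}(r,a_i) \;\le\; N(r,\Phi) \;=\; T(r,\Phi)-m(r,\Phi) \;\le\; \sum_{i=1}^{3} N^{*}(r,a_i) + S(r,f)+S(r,g),
\end{equation*}
and on the other hand every bad $a_i$-point counted by $N^{*}(r,a_i)$ contributes a zero of $f'$ or $g'$ of order $\ge 1$ that is \emph{not} counted by the usual ramification term.

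The main input is now the Nevanlinna second main theorem applied to $f$ and to $g$ with the four targets $a_1,a_2,a_3,\infty$, which together with the IM-sharing yields
\begin{equation*}
2\bigl(T(r,f)+T(r,g)\bigr) \;\le\; 2\sum_{i=1}^{3}\overline N\!\left(r,\tfrac{1}{f-a_i}\right)+2\overline N(r,f) - N_{0}(r,1/f')-N_{0}(r,1/g') + S(r,f)+S(r,g).
\end{equation*}
One then inserts the multiplicity refinement: at each common simple $a_i$-point the full zero is counted both in $N$ and in $\overline N$, but at each bad point the difference $N-\overline N$ is at least one and this difference is absorbed by $N_0(r,1/f')+N_0(r,1/g')$. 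Combining this with the estimate for $N(r,\Phi)$ above, after standard bookkeeping one arrives at an inequality of the shape
\begin{equation*}
\sum_{i=1}^{3}\overline N\!\left(r,\tfrac{1}{f-a_i}\right) \;\ge\; \bigl(T(r,f)+T(r,g)\bigr)-\tfrac{1}{2}\overline N(r,f) - \tfrac{1}{2}N(r,\Phi) + S(r,f)+S(r,g).
\end{equation*}

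The last step is to feed in the growth hypothesis. Since $\overline N(r,f)\le N(r,f)=N(r,1/(f-a_4))\le T(r,f)+O(1)$ and, by hypothesis, $N(r,1/(f-a_4))\ge \lambda T(r,f)$ for $r\in I$ with $\lambda>4/5$, the CM-sharing at $\infty$ gives $T(r,g)=T(r,f)+S(r,f)$ and constrains $\sum_{i=1}^{3}\overline N(r,1/(f-a_i))\le (2-\lambda)T(r,f)+S(r,f)$ on $I$. Plugging these bounds into the inequality above produces a strict inequality of the form $\lambda<4/5+o(1)$ on a set of infinite linear measure, contradicting $\lambda>4/5$. Therefore $\Phi\equiv 0$ and the first case applies, so $f$ and $g$ share all four values CM. The delicate point — and the place where the precise threshold $4/5$ enters — is the multiplicity counting at the bad $a_i$-points: one must track carefully how contributions from $N^{*}(r,a_i)$ appear simultaneously in $N(r,\Phi)$ and in the ramification defects $N_0(r,1/f'),N_0(r,1/g')$, so that no factor is double-counted and the constant in front of $\overline N(r,f)$ comes out sharp enough to trigger the contradiction exactly at $\lambda=4/5$.
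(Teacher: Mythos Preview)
The paper does not prove Theorem~\ref{TheoremE}; it is quoted verbatim from Gundersen~\cite{Gundersen1992} and used as a black box in the proof of Theorem~\ref{Theorem1.1}. So there is no proof in the paper to compare your attempt against, and your proposal must stand or fall on its own.

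Your auxiliary function $\Phi$ is the right object, and the local analysis (analyticity at common poles with a zero of order $2p-1$, simple poles exactly at the ``bad'' $a_i$-points) is correct. However, two things are wrong or missing. First, in the case $\Phi\equiv 0$ your integration claim is false: integrating $f'/\prod(f-a_i)=g'/\prod(g-a_i)$ does \emph{not} give $\prod(f-a_i)=c\prod(g-a_i)$, because the partial-fraction coefficients $c_i=1/\prod_{j\ne i}(a_i-a_j)$ are not all equal to $1$. The conclusion is still salvageable by the simpler remark that $\Phi\equiv 0$ forces $N^*(r,a_i)=0$ for all $i$, i.e.\ CM sharing directly.

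Second, and more seriously, the heart of the argument --- getting the threshold $4/5$ --- is not actually carried out. You write ``after standard bookkeeping one arrives at an inequality of the shape\ldots'' and then ``plugging these bounds\ldots produces a strict inequality $\lambda<4/5+o(1)$'', but you never exhibit the bookkeeping, and the displayed inequality you claim to reach has the wrong signs relative to what SMT gives. If one tries the obvious route --- combine $T(r,\Phi)=N^*(r)+S(r)$, $N(r,1/\Phi)\ge 2N(r,f)-\overline N(r,f)$, the trivial bound $N^*(r)\le\sum_i\overline N(r,1/(f-a_i))$, and Lemma~\ref{Lemma2.2}(ii) --- one only gets $N(r,f)\le T(r,f)+S(r)$, which is vacuous. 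The sharp constant $4/5$ requires a genuinely more careful count relating $N^*(r)$ simultaneously to the excess multiplicities $\sum_i[N-\overline N](r,1/(f-a_i))$ for both $f$ and $g$ and to the zeros of $\Phi$ at poles; you acknowledge this is ``the delicate point'' but do not supply it. As written, the proposal is an outline with the key inequality asserted rather than proved.
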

\vskip 2mm
\par
\begin{theorem}\rm{}(\cite[Theorem 1]{Gundersen1992})\label{TheoremF}
 Let $f$ and $g$ be two distinct and non-constant meromorphic functions, and let $a_1,$ $a_2,$  $a_3,$  $a_4$ be four
distinct values in the extended complex plane. If $f$ and $g$ share $a_1,$ $a_2,$ $a_3$ IM and
$a_4$ CM, then  $f$  and $g$ are functions of normal growth, $f$
and $g$ have the same order, and the order of $f$  and $g$ is a
positive integer or infinite.
\end{theorem}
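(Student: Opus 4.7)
The plan is to reduce to a normalized configuration, extract a characteristic-function identity from Nevanlinna's second main theorem, and then exploit the single CM hypothesis to produce an exponential factorization from which both the regularity and the integer-order claims follow. First, after a M\"obius transformation, I would assume $a_4=\infty$ and $a_1,a_2,a_3\in\mathbb{C}$, so that the CM sharing of $\infty$ gives $N(r,f)\equiv N(r,g)$ (with multiplicities) and hence $f-g$ is entire, while the IM sharing of $a_1,a_2,a_3$ yields $\overline N\!\left(r,1/(f-a_i)\right)=\overline N\!\left(r,1/(g-a_i)\right)$ for $i=1,2,3$.

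Next I would apply Nevanlinna's second main theorem to $f$ and to $g$ for the four target values, then subtract. Using the sharing identities just noted, together with the trivial bound $\overline N_E(r,a_i;f,g)\leq \overline N(r,1/(f-a_i))$, one obtains, after standard manipulation,
\[
T(r,f) \;=\; T(r,g) + S(r,f) + S(r,g).
\]
This already delivers $\rho(f)=\rho(g)$ and $\mu(f)=\mu(g)$, handling the "same order" conclusion once regularity is in hand.

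For the structural part I would introduce an auxiliary function built from the logarithmic derivatives $f'/(f-a_i)$ and $g'/(g-a_i)$ and from $f-g$, engineered so that its poles and zeros — which can only live over the common $a_i$-points where multiplicities might differ — cancel. By the lemma on the logarithmic derivative, the resulting expression is a small function of $f$, and clearing denominators converts it into an algebraic identity between $f$ and $g$ whose coefficients belong to $S(f)$. Simultaneously, the CM sharing of $\infty$, through the usual Hadamard-type argument applied to the entire function $f-g$ together with $1/(f-a_i),\,1/(g-a_i)$ relations, lets me introduce an entire function $\alpha$ with exponential factors attached to the shared structure.

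The decisive and most delicate step is to combine the algebraic identity with the exponential factor to obtain a representation of the form
\[
\sum_{j=1}^{k} A_j\, e^{\alpha_j} \;\equiv\; 0,
\]
with $A_j\in S(f)$ and $\alpha_j$ entire, and then to invoke a Borel-type theorem on linear combinations of exponentials. If $\rho(f)<\infty$, this forces each $\alpha_j$ to be a polynomial of a common positive degree $d$, whence $\mu(f)=\rho(f)=d\in\mathbb{Z}^+$ by the regular growth of $T(r,e^P)$ for polynomial $P$; if $\rho(f)=\infty$, there is nothing more to prove. The main obstacle, as I see it, lies precisely here: one must extract an exponential sum with enough terms and sufficient linear independence of the coefficients in $S(f)$ for Borel's theorem to produce a non-trivial structural conclusion. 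Striking this balance requires using the CM hypothesis (to furnish the exponential factor) and all three IM hypotheses (to furnish the algebraic identity) in a coordinated way — which is what distinguishes this case from the purely IM setting, where no such rigidity can be produced.
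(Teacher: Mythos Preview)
The paper does not itself prove Theorem~F; it is quoted as a known result from Gundersen and Li--Yi, so there is no in-paper proof to compare against. I can still assess your outline.

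There is a concrete error at the outset: CM sharing of $\infty$ does \emph{not} make $f-g$ entire. It says $f$ and $g$ have the same poles with the same multiplicities, whence $N(r,f)=N(r,g)$, but the principal parts need not agree, so $f-g$ generically still has poles at those points. Everything you hang on ``$f-g$ entire'' (the Hadamard-type step, the production of an exponential factor) is unsupported.

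More seriously, the structural core of your sketch is too vague to be a proof. You promise ``an auxiliary function built from logarithmic derivatives \ldots\ engineered so that poles and zeros cancel'' and then an exponential sum $\sum A_j e^{\alpha_j}=0$ to which Borel applies, but you never say what the function is or how a genuine exponential sum with independent exponents arises. The machinery the paper \emph{does} deploy (in proving its main Theorem~1.1, which rests on Theorem~F) shows what is actually needed: the specific second-order combination
\[
\varphi=\frac{f''}{f'}-\sum_{i=1}^{3}\frac{f'}{f-a_i}-\frac{g''}{g'}+\sum_{i=1}^{3}\frac{g'}{g-a_i}
\]
satisfies $m(r,\varphi)=S(r,f)$ and $N(r,\varphi)\le N_0(r,1/f')+N_0(r,1/g')=S(r,f)$, because the three IM-shared finite values together with the CM-shared $\infty$ kill every candidate pole. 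Integrating yields
\[
\frac{f'\,g(g-a_1)(g-a_2)(g-a_3)}{g'\,f(f-a_1)(f-a_2)(f-a_3)}\;=\;R\,e^{P},
\]
a \emph{single} exponential (not a sum) with $P'=\varphi$ up to a rational correction. It is the growth of this $e^{P}$ that one ties to $T(r,f)$ to extract regularity and the positive-integer-or-infinite order. Your proposal never reaches this object, and without it the Borel step has nothing concrete to act on.
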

\vskip 2mm
\par Next we consider the following special case of Question \ref{Question1.1}:
\vskip 2mm
\par
\begin{question}\rm{}\label{Question1.2}
If two non-constant meromorphic functions share three values IM and share a fourth value CM, where one of the two non-constant meromorphic functions has finite order, then do the two functions necessarily share all four values CM?
\end{question}
\vskip 2mm
\par In this paper, we give an affirmative answer to Question \ref{Question1.2} for two distinct and non-constant meromorphic functions satisfying the assumptions in Question \ref{Question1.2}. Indeed, we prove the following theorem in this paper:
\vskip 2mm
\par
\begin{theorem}\rm{}\label{Theorem1.1}
Suppose that $f$ and $g$ are two distinct and non-constant meromorphic functions, and one of $f$ and $g$ has finite order. If $f$ and $g$ share $a_1,$ $a_2,$ $a_3$ IM and $a_4$ CM, where $a_1,$ $a_2,$ $a_3,$ $a_4$ are four distinct complex values in the extended complex plane, then $f$ and $g$ share $a_1,$ $a_2,$ $a_3$ and $a_4$ CM.
\end{theorem}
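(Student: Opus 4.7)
The strategy is to argue by contradiction, combining Theorems \ref{TheoremE} and \ref{TheoremF} with the Hadamard factorisation afforded by the finite order hypothesis. Assume the conclusion fails, so at least one of $a_1, a_2, a_3$ is shared IM but not CM. By Theorem \ref{TheoremF}, the common order $\rho := \rho(f) = \rho(g)$ is a positive integer and both functions are of regular growth, so each $f - a_i$ admits a Hadamard factorisation of the form $e^{P_i(z)} \Pi_i(z)$ with $\deg P_i \leq \rho$ and $\Pi_i$ a canonical product of genus at most $\rho$; analogously for $g - a_i$.

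Applying the contrapositive of Theorem \ref{TheoremE} then yields $\limsup_{r \to \infty,\, r \notin E} N(r, 1/(f-a_4))/T(r,f) \leq 4/5$, so that $\delta(a_4, f) \geq 1/5$ and, symmetrically, $\delta(a_4, g) \geq 1/5$; the CM-shared value $a_4$ is thus Nevanlinna-deficient for both $f$ and $g$. Moreover, since $a_4$ is CM-shared, the quotient $(f - a_4)/(g - a_4)$ is entire and nowhere zero, so the finite integer order forces $(f - a_4)/(g - a_4) = e^{Q(z)}$ for a polynomial $Q$ with $\deg Q \leq \rho$.

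The next step is to build auxiliary functions that encode the IM sharing at $a_1, a_2, a_3$. Natural candidates are the logarithmic-derivative differences $\Phi_i = f'/(f - a_i) - g'/(g - a_i)$ for $i = 1, 2, 3$ and cross-ratio expressions such as $(f - a_j)(g - a_k)/[(f - a_k)(g - a_j)]$. Each $\Phi_i$ is regular at every common $a_i$-point where the multiplicities of $f$ and $g$ agree; its poles can appear only at common $a_i$-points of differing multiplicities. Using the second fundamental theorem for $f$ (and for $g$) at $a_1, a_2, a_3, a_4$ together with the IM-equality of the reduced counting functions and the CM relation at $a_4$, one shows $T(r, \Phi_i) = S(r, f)$, so each $\Phi_i$ is a small function of $f$ in the Halburd--Korhonen sense.

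These small functions, combined with the Hadamard factorisation and the explicit form $e^Q$ at $a_4$, yield a system of polynomial-exponential identities among the $P_i$'s. The main obstacle is the final step: translating this system, together with the defect bound $\delta(a_4, f) \geq 1/5$, into a contradiction. I would follow the scheme of Mues \cite{Mues1989F1}, where for entire functions sharing three finite values DM, the fact that $\infty$ was Picard-exceptional allowed a truncated second-main-theorem count to close the argument. Here $\infty$ plays no privileged role, so I expect the delicate part to be a careful double counting of the multiplicity discrepancies at the IM-shared $a_i$-points, ultimately showing that $Q$ must be constant and that the multiplicities at every IM-shared $a_i$-point in fact agree, contradicting the assumption that $f$ and $g$ do not share all four values CM.
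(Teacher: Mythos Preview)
Your proposal has a genuine gap at two places.

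First, the claim $T(r,\Phi_i)=S(r,f)$ for $\Phi_i=f'/(f-a_i)-g'/(g-a_i)$ is unjustified and essentially circular. The proximity term $m(r,\Phi_i)$ is indeed $O(\log r)$ by the logarithmic derivative lemma, but $N(r,\Phi_i)$ is not controlled: $\Phi_i$ has a simple pole at every common $a_i$-point where the multiplicities for $f$ and $g$ differ, and the reduced counting function of those points is precisely what you are trying to show is $S(r,f)$. The paper avoids this by using a different auxiliary function,
\[
\varphi=\frac{f''}{f'}-\sum_{j=1}^{3}\frac{f'}{f-a_j}-\frac{g''}{g'}+\sum_{j=1}^{3}\frac{g'}{g-a_j},
\]
which involves the \emph{second} logarithmic derivative. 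This combination is regular at every common $a_j$-point regardless of multiplicities, so $N(r,\varphi)$ is controlled by $N_0(r,1/f')+N_0(r,1/g')=S(r,f)$ via Lemma~\ref{Lemma2.3}(i). Integrating $\varphi$ gives the key relation $\dfrac{f'}{f(f-1)(f-c)}=Re^{P}\dfrac{g'}{g(g-1)(g-c)}$ with $R$ rational and $P$ a polynomial; the dichotomy is then on whether $P$ is constant.

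Second, even granting a correct auxiliary setup, your ``main obstacle'' is not a gap you can close with Mues's entire-function scheme or a double count: when $P$ is non-constant the paper needs heavy machinery you do not mention at all. The argument passes to angular Nevanlinna theory on sectors determined by the leading term of $P$, proves (Claim~2I) that in each such sector there are only finitely many common $a_l$-points by exploiting that $|Re^P|$ forces the multiplicity ratio to blow up, and then combines Baernstein's spread theorem (Lemma~\ref{Lemma2.15}) with Green-function estimates on a sectorial domain (Lemmas~\ref{Lemma2.16}--\ref{Lemma2.18}) and a Bergweiler--Eremenko/Rippon--Stallard type lower bound on $|zf'/f|$ (Lemmas~\ref{Lemma2.19}--\ref{Lemma2.20}) to reach a contradiction. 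Your Hadamard-factorisation/polynomial-exponential outline does not supply any substitute for this sectorial analysis.
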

\vskip 2mm
\par
\begin{remark}\rm{}\label{Remark1.1}
Theorem \ref{Theorem1.1} gives an affirmative answer to Question \ref{Question1.1} for non-constant meromorphic functions $f$ and $g$ of finite order sharing three values IM and a fourth value CM.
\end{remark}
\vskip 2mm
\par From Theorem \ref{Theorem1.1} we get the following result:
\vskip 2mm
\par
\begin{corollary}\rm{}\label{corollary1.1}
Suppose that $f$ and $g$ are two distinct and non-constant entire functions, and one of $f$ and $g$ has finite order. If $f$ and $g$ share $a_1,$ $a_2,$ $a_3$ IM, where $a_1,$ $a_2,$ $a_3$ are three distinct finite complex values, then $f$ and $g$ share $a_1,$ $a_2$ and $a_3$  CM.
\end{corollary}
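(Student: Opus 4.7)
The plan is to deduce Corollary \ref{corollary1.1} directly from Theorem \ref{Theorem1.1} by choosing $a_{4}=\infty$ as the CM-shared value. More precisely, I would first observe that since $f$ and $g$ are entire, neither function has any poles in the complex plane, so $\infty$ is a Picard exceptional value for both $f$ and $g$. Consequently $f$ and $g$ trivially share $\infty$ CM, because the hypothesis on the common $\infty$-points is vacuously satisfied (there are none). Combined with the given hypothesis that $f$ and $g$ share the three finite values $a_{1},a_{2},a_{3}$ IM, this means $f,g$ share the four distinct extended-plane values $a_{1},a_{2},a_{3},\infty$ in precisely the configuration required by Theorem \ref{Theorem1.1}: three IM and one CM.

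Since one of $f,g$ is assumed to have finite order, all the hypotheses of Theorem \ref{Theorem1.1} are in force. Applying that theorem yields that $f$ and $g$ share $a_{1},a_{2},a_{3},\infty$ CM, which in particular gives that $f$ and $g$ share $a_{1},a_{2},a_{3}$ CM, completing the proof. In this deduction there is essentially no obstacle beyond Theorem \ref{Theorem1.1} itself: the only point that would warrant a brief comment in the written-up argument is the vacuous-CM observation at $\infty$ for entire functions, which justifies treating the entire-function setting as a genuine special case of the meromorphic setting considered in Theorem \ref{Theorem1.1}.
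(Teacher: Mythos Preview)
Your proposal is correct and matches the paper's approach exactly: the paper simply states that Corollary \ref{corollary1.1} follows from Theorem \ref{Theorem1.1}, and your argument (taking $a_4=\infty$, which is vacuously CM-shared since entire functions have no poles) is precisely the intended deduction.
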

\vskip 2mm
\par
\maketitle
\section{Preliminaries}
\vskip 2mm
\par In this section, we will introduce some results that play an important role in proving the main result in this paper.
First of all, we introduce the following result due to Adams-Straus\cite{Adams1971}:
\vskip 2mm
\par
\begin{lemma}\rm{}(\cite[Theorem 3]{Adams1971}). \label{lemma2.1}
\, Let $f$ and $g$ be two non-constant rational functions. If $f$ and $g$ share four distinct
values $a_1,$ $a_2,$ $a_3,$ $a_4$ IM, then $f=g.$
\end{lemma}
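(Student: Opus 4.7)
The plan is to derive a contradiction, assuming $f \neq g$, by an algebraic count of zeros of the difference $h := f-g$. The argument rests on two classical ingredients: the Riemann--Hurwitz ramification relation applied to $f$ and $g$ viewed as self-maps of $\mathbb{P}^1$, together with the elementary degree bound $\deg h \leq \deg f + \deg g$.

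First I would normalize. Post-composition with a single M\"obius transformation $T$ replaces $(f,g)$ by $(T\circ f,\, T \circ g)$, preserves IM sharing (with new shared values $T(a_j)$), and preserves the degrees. I would choose $T$ so that $T(a_1),\dots,T(a_4)$ all lie in $\mathbb{C}$. This reduction is crucial: it guarantees that every common $a_j$-point becomes an ordinary zero of $h$ in $\mathbb{C}$ and cannot interact with the pole structure of $h$.

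Next I would combine the two bounds. Set $p := \deg f$, $q := \deg g$, and write $f = P/Q$, $g = R/S$ in lowest terms. Then $h = (PS - QR)/(QS)$, so $\deg h \leq p + q$, and $h$ has at most $p+q$ distinct zeros in $\mathbb{C}$. Letting $N_j$ denote the common number of distinct $a_j$-points of $f$ and $g$ (equal by IM sharing), every such point is a zero of $h$, and these zeros are distinct across $j$ since the $a_j$ are distinct, yielding $\sum_{j=1}^{4} N_j \leq \deg h \leq p+q$. In the opposite direction, Riemann--Hurwitz applied to $f$ gives $\sum_{a \in \mathbb{P}^1}(p - N_a(f)) = 2p - 2$; restricting the sum to $a_1,\dots,a_4$ produces $\sum_{j=1}^4 N_j \geq 2p + 2$, and symmetrically $\sum_{j=1}^4 N_j \geq 2q + 2$. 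Combining forces $2\max(p,q) + 2 \leq p + q$, which is impossible. Hence $h \equiv 0$, i.e., $f = g$.

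I do not expect any serious obstacle beyond bookkeeping. The one delicate point is the M\"obius normalization: without it, a shared value at $\infty$ would make common poles interact with the pole set of $h = f-g$ rather than contributing to its zero set, destroying the clean distinct-zero count. Once all four values lie in $\mathbb{C}$, the remainder is immediate from Riemann--Hurwitz and the trivial degree estimate on a difference of rational functions, so no further machinery (in particular, no appeal to Nevanlinna's second main theorem, which would also work but is heavier) is needed.
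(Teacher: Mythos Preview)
The paper does not give its own proof of this lemma; it is quoted verbatim as Theorem~3 of Adams--Straus and used as a black box. There is therefore nothing to compare against on the paper's side.

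Your argument is correct and is the standard zero-counting proof for the rational case: after a M\"obius normalization on the target so that all four $a_j$ are finite, every common $a_j$-point is a genuine zero of $h=f-g$, the degree estimate gives $\sum_j N_j \le p+q$, and Riemann--Hurwitz applied to each of $f,g$ gives $\sum_j N_j \ge 2p+2$ and $\ge 2q+2$, forcing a contradiction. One minor bookkeeping point: your upper bound counts zeros of $h$ in $\mathbb{C}$, whereas $\infty\in\mathbb{P}^1$ could itself be a common $a_j$-point for some $j$. This discrepancy is at most $1$ and does not affect the contradiction $2\max(p,q)+2\le p+q$; alternatively, precompose with a M\"obius transformation on the \emph{source} so that $\infty$ lies in none of the four fibres, which removes the issue entirely.
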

\vskip 2mm
\par The following result was proved in Nevanlinna \cite{Nevanlinna1926} originally:
\vskip 2mm
\par
\begin{lemma}\rm{}(\cite[p.373, Satz 3]{Nevanlinna1926} or\cite[Theorem 2]{Gundersen1979}) \label{Lemma2.2}
Let $f$ and $g$ be two distinct and non-constant meromorphic functions, and let $a_1,$ $a_2,$ $a_3$ be three distinct and finite complex values in the complex plane. If $f$ and $g$ share $a_1,$ $a_2,$ $a_3$ and $\infty$ IM, then
\vskip 2mm
\par (i) $T(r,f)=T(r,g)+S(r,f);$ (ii) $2T(r,f)=\overline{N}\left(r,f\right)+\sum\limits_{j=1}^{3} \overline{N}\left(r,\frac{1}{f-a_{j}}\right)+S(r,f).$
\end{lemma}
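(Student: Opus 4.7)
The plan is to apply the Second Main Theorem to $f$ with the four targets $a_1,a_2,a_3,\infty$,
$$2T(r,f)\le\overline{N}(r,f)+\sum_{j=1}^{3}\overline{N}\left(r,\tfrac{1}{f-a_j}\right)+S(r,f),$$
and to bound the right-hand side above by $T(r,f)+T(r,g)+S(r,f)$ via a single auxiliary function. Combined with the symmetric inequality for $g$, this will deliver (i) and (ii) simultaneously.

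As a preliminary, I would replace $f$ and $g$ by $f-c$ and $g-c$ for some constant $c\notin\{a_1,a_2,a_3\}$. Every $T$, $N$, and $\overline{N}$ in the conclusion is invariant under this shift up to $O(1)$, and afterwards the finite shared values become nonzero, so we may assume $0\notin\{a_1,a_2,a_3\}$.

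The key step is the single auxiliary function $\Psi:=\tfrac{1}{f}-\tfrac{1}{g}$. We have $\Psi\not\equiv 0$ since $f\not\equiv g$; at each common $a_j$-point, $\Psi=\tfrac{1}{a_j}-\tfrac{1}{a_j}=0$ (which is meaningful precisely because $a_j\neq 0,\infty$); and at each common pole, $\tfrac{1}{f}=\tfrac{1}{g}=0$, so $\Psi=0$ again. Since the common shared-value points are distinct across different fibres, counting them once yields
$$\overline{N}(r,f)+\sum_{j=1}^{3}\overline{N}\left(r,\tfrac{1}{f-a_j}\right)\le N\left(r,\tfrac{1}{\Psi}\right)\le T(r,\Psi)+O(1)\le T(r,f)+T(r,g)+O(1),$$
the last inequality using $T(r,\Psi)\le T(r,\tfrac{1}{f})+T(r,\tfrac{1}{g})+O(1)=T(r,f)+T(r,g)+O(1)$.

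Substituting into the Second Main Theorem yields $T(r,f)\le T(r,g)+S(r,f)$; symmetry gives the reverse, proving (i). Feeding the equality from (i) back into the chain forces every ``$\le$'' to be equality up to $S(r,f)$, which is exactly (ii). The main delicacy is the choice of $\Psi$: the naive $f-g$ misses the common poles, and using $f-g$ together with $\tfrac{1}{f}-\tfrac{1}{g}$ costs a factor of $2$ and produces only a vacuous estimate; the preliminary translation is precisely what lets a single function capture all four shared-value fibres at once.
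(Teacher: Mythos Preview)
Your argument is correct and follows essentially the same path as the classical proof the paper cites (Nevanlinna/Gundersen): apply the Second Main Theorem to $f$ with the four targets and bound the resulting sum of reduced counting functions by $T(r,f)+T(r,g)+O(1)$ via an auxiliary function vanishing on every shared-value fibre, then combine with the symmetric estimate for $g$. Your device $\Psi=\tfrac{1}{f}-\tfrac{1}{g}$ after shifting $0$ away from the $a_j$ is a clean way to capture the pole fibre together with the three finite ones in a single stroke, and the error term you obtain is exactly the logarithmic-derivative expression recorded in Remark~\ref{Remark2.1}.
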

\vskip 2mm
\par
\begin{remark}\rm{}\label{Remark2.1} A proof of Lemma \ref{Lemma2.2} can be in Gundersen \cite[p.101, proof of Lemma 1]{Gundersen1992}. From \cite[Theorem 2.1]{Hayman1964} and the proof of  \cite[Lemma 1]{Gundersen1992} we see that the error term $S(r,f)$ in Lemma \ref{Lemma2.2} is expressed as
\begin{equation}\label{eq2.1}
S(r,f)=m\left(r,\frac{f'}{f}\right)+m\left(r,\sum\limits_{j=1}^3\frac{f'}{f-a_j}\right)+O(1).
\end{equation}
\end{remark}
\vskip 2mm
\par The following result and its proof can be found in Gundersen \cite{Gundersen1983}:
\vskip 2mm
\par
\begin{lemma}\rm{}(\cite[p.550, Lemma 2]{Gundersen1983} and \cite[p.549, Corollary 1(II)]{Gundersen1983}). \label{Lemma2.3}
Let $f$ and $g$ be two diatinct and non-constant meromorphic functions that share four distinct values $a_1,$ $a_2,$ $a_3$ and $a_4$ IM, where $a_4=\infty.$ Then the following statement holds:
\vskip 2mm
\par (i) $N_0\left(r,\frac{1}{f'}\right)+N_0\left(r,\frac{1}{g'}\right)=S(r,f),$ where $N_0\left(r,\frac{1}{f'}\right)$ and $N_0\left(r,\frac{1}{g'}\right)$ ``count" respectively only those points in $N_0\left(r,\frac{1}{f'}\right)$ and $N_0\left(r,\frac{1}{g'}\right)$ which do not occur when $f(z)=g(z)=a_{j}$ for some $j=1,2,3,4.$
\vskip 2mm
\par (ii) $\sum\limits_{j=1}^{4}N^{\ast}(r,a_{j},f,g)=S(r,f),$ where $N^{\ast}(r,a_{j},f,g)$ with $1\leq j\leq 4$ and $j\in\Bbb{Z}$ denotes the counting function of those common $a_j$-points of $f$ and $g$ in $|z|<r$ that are multiple for both $f$ and $g,$ and each such point in $N^{\ast}(r,a_{j},f,g)$ is counted according to the number of the times of the smaller of the two multiplicities.
\end{lemma}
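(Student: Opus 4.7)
My approach splits along the two parts of the lemma.

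For Part~(i), I would apply Nevanlinna's Second Main Theorem to $f$ with the four values $a_1, a_2, a_3, a_4$ (with $a_4=\infty$), obtaining
\[
2T(r,f)\le\overline{N}(r,f)+\sum_{j=1}^{3}\overline{N}\left(r,\frac{1}{f-a_j}\right)-N_{0}^{\mathrm{SMT}}\left(r,\frac{1}{f'}\right)+S(r,f),
\]
where the SMT ramification term $N_{0}^{\mathrm{SMT}}(r,1/f')$ counts zeros of $f'$ lying away from all $a_j$-points of $f$. Comparing with the equality in Lemma~\ref{Lemma2.2}~(ii) forces $N_{0}^{\mathrm{SMT}}(r,1/f')\le S(r,f)$, hence equality. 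Because $f$ and $g$ share every $a_j$ IM, the condition ``$f(z)=a_j$ for some $j$'' is equivalent to ``$f(z)=g(z)=a_j$ for some $j$,'' so $N_{0}^{\mathrm{SMT}}$ coincides with the $N_0$ in the statement of the lemma. The same argument for $g$, combined with Lemma~\ref{Lemma2.2}~(i) to absorb $S(r,g)$ into $S(r,f)$, yields Part~(i).

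Part~(ii) is the heart of the lemma. The difficulty is that zeros of $f'$ and $g'$ at a common $a_j$-point $z_0$ with $m_f,m_g\ge 2$ are hidden by the sharing and are therefore invisible to the $N_0$-bound of Part~(i). My plan is to construct a meromorphic function $\Psi$ on the plane with two properties: (a) at every common $a_j$-point of $f$ and $g$ where both multiplicities are at least $2$, $\Psi$ has a pole of order at least $\min(m_f,m_g)$; and (b) the Nevanlinna characteristic satisfies $T(r,\Psi)=S(r,f)$. Granted such a $\Psi$, the First Main Theorem gives
\[
\sum_{j=1}^{4}N^{\ast}(r,a_j;f,g)\le N(r,\Psi)+O(1)=S(r,f),
\]
which is Part~(ii).

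For the construction I would begin with the first-order differences $\phi_j:=\frac{f'}{f-a_j}-\frac{g'}{g-a_j}$ for $j=1,2,3$ (and $\frac{f'}{f}-\frac{g'}{g}$ for $a_4=\infty$). The lemma on the logarithmic derivative yields $m(r,\phi_j)=S(r,f)$, and local Laurent expansions show that each $\phi_j$ already registers common $a_j$-points with \emph{mismatched} multiplicities via simple poles with residue $m_f-m_g$. The remaining case of matched multiplicities $m_f=m_g\ge 2$ is the obstacle, because $\phi_j$ is regular there; it would be captured by a second-order companion such as $\frac{f''}{f'}-\frac{g''}{g'}$ or a normalized Wronskian $f'g''-f''g'$, whose proximity is again controlled by the logarithmic derivative lemma. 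The main difficulty will be to package these first- and second-order pieces into a single $\Psi$ whose pole at every common multiple $a_j$-point dominates $\min(m_f,m_g)$, while any extraneous poles of $\Psi$ (away from common $a_j$-points) are controlled using Part~(i) together with Lemma~\ref{Lemma2.2}.
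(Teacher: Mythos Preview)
Your Part~(i) argument via the Second Main Theorem ramification term, compared against the equality in Lemma~\ref{Lemma2.2}(ii), is correct.

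Part~(ii) has a genuine gap. Your candidate second-order auxiliaries do not detect the critical case of \emph{matched} multiplicities $m_f=m_g=m\ge 2$: at such a point both $f''/f'$ and $g''/g'$ have a simple pole with residue $m-1$, so their difference is regular; and $f'g''-f''g'$ has a \emph{zero} there (the leading terms cancel), while its characteristic is not $S(r,f)$ in any case. You rightly flag this packaging as the main difficulty, but you do not resolve it.

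The approach indicated in Remark~\ref{Remark2.2} (due to Mues, and used in Gundersen's proof) is different in spirit: instead of forcing poles, one builds the \emph{entire} function
\[
\phi=\frac{f'g'(f-g)^{2}}{(f-a_1)(f-a_2)(f-a_3)(g-a_1)(g-a_2)(g-a_3)}.
\]
A local count shows $\phi$ is holomorphic at every common $a_j$-point and every common pole; at a common $a_j$-point (including $j=4$, i.e.\ a common pole) with $\min(m_f,m_g)\ge 2$ it has a zero of order $2\min(m_f,m_g)-2\ge \min(m_f,m_g)$, and it also vanishes at every zero of $f'$ or $g'$ away from the $a_j$-points. Expanding $(f-g)^2$ and taking partial fractions exhibits $\phi$ as a finite linear combination of products $\dfrac{f'}{f-a_s}\cdot\dfrac{g'}{g-a_t}$, so $T(r,\phi)=m(r,\phi)=S(r,f)$ by the logarithmic-derivative lemma. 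Since $f\ne g$ forces $\phi\not\equiv 0$, one obtains
\[
\sum_{j=1}^{4}N^{\ast}(r,a_j;f,g)\;\le\; N\Bigl(r,\tfrac{1}{\phi}\Bigr)\;\le\; T(r,\phi)+O(1)=S(r,f),
\]
and the same inequality also re-proves Part~(i). The idea you are missing is the factor $(f-g)^{2}$, which manufactures zeros of exactly the right order at common multiple $a_j$-points without any second-derivative machinery.
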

\vskip 2mm
\par
\begin{remark}\rm{}\label{Remark2.2}
A proof of Lemma \ref{Lemma2.3} can be found in the proof of Lemma 3 in \cite[p.102]{Gundersen1992}. Following the expression of the Mues$'$s function $\Psi(f)$ in \cite[pp.176-177]{FrankHua1999} and the proof of Lemma 3 in \cite[p.102]{Gundersen1992}, we can see that the error term $S(r,f)$ in Lemma \ref{Lemma2.3} is expressed as
\begin{equation}\label{eq2.2}
S(r,f)=m(r,\phi)+O(1)= m\left(r,\sum\limits_{s=1}^3\sum\limits_{t=1}^3\frac{c_{st}f'}{f-a_s}\frac{g'}{g-a_t}\right)+O(1),
      \end{equation}
   where $c_{st}$ with $s,t\in\{1,2,3\}$ is a finite complex constant that depends only on $a_1,$ $a_2$ and $a_3,$  and $\phi$ is called the E.Mues's function that is an entire function defined as
\begin{equation}\nonumber
\phi=\frac{f'g'(f-g)^2}{(f-a_1)(f-a_2)(f-a_3)(g-a_1)(g-a_2)(g-a_3)}
\end{equation}
when $f$ and $g$ share $a_1,$ $a_2,$ $a_3,$ $\infty$ IM. This can be found in Mues \cite[p.171, proof of Lemma 1]{Mues1989}. Here we mention that based upon the assumption that two distinct and non-constant meromorphic functions $f$ and $g$ share $0,$ $1,$ $c$ and $\infty$ IM, where $c$ is a finite complex number such that $c\not\in\{0,1,\infty\},$ the Mues$'$s function $\phi$ in Mues \cite[p.171, proof of Lemma 1]{Mues1989} is written into
\begin{equation}\nonumber
\phi=\frac{f'g'(f-g)^2}{(f)(f-1)(f-c)(g)(g-1)(g-c)}.
\end{equation}
\end{remark}
The following result is due to Yang \cite{Yang1993}:
\vskip 2mm
\par
\begin{lemma}\rm{}(\cite[Theorem 1.6]{Yang1993})\label{lemma2.4}. Suppose that
$f$ is a non-constant meromorphic function of finite order, and let $k$ be a positive integer. Then $m\left(r,\frac{f^{(k)}}{f}\right)=O(\log r), \, \, \text{as}  \, \, r\rightarrow\infty.$
\end{lemma}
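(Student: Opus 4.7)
The plan is to combine a pointwise-in-$r$ form of Nevanlinna's lemma on the logarithmic derivative with the polynomial growth of $T(r,f)$ forced by the finite-order hypothesis, and then to reduce the general $k$ to the case $k=1$ by a one-line induction.

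First I would reduce to the case $k=1$. The identity $f^{(k)}/f=\prod_{j=0}^{k-1}(f^{(j)})'/f^{(j)}$ together with the subadditivity of $m(r,\cdot)$ gives
\begin{equation*}
m\!\left(r,\frac{f^{(k)}}{f}\right) \;\leq\; \sum_{j=0}^{k-1} m\!\left(r,\frac{(f^{(j)})'}{f^{(j)}}\right)+O(1).
\end{equation*}
Since $\rho(f^{(j)})=\rho(f)<\infty$ for every $j$, it suffices to prove $m(r,h'/h)=O(\log r)$ for an arbitrary non-constant meromorphic function $h$ of finite order.

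For this reduced statement I would invoke the Poisson-Jensen based form of the logarithmic derivative lemma (Hayman, Theorem 2.2, or its sharpening by Gol'dberg and Grinstein): for all $0<r<R<\infty$,
\begin{equation*}
m\!\left(r,\frac{h'}{h}\right) \;\leq\; K\!\left\{\log^{+}T(R,h)+\log^{+}R+\log^{+}\frac{1}{R-r}+1\right\},
\end{equation*}
with an absolute constant $K$ and, crucially, without any exceptional set in $r$. Choosing $R=2r$ makes the middle two terms $O(\log r)$ and $O(1)$, so $m(r,h'/h)\leq K\log^{+}T(2r,h)+O(\log r)$. The hypothesis $\rho(h)<\infty$ forces $T(2r,h)\leq(2r)^{\rho(h)+\varepsilon}$ for every $\varepsilon>0$ and all sufficiently large $r$, so $\log^{+}T(2r,h)=O(\log r)$, and the stated bound follows.

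The only delicate point is securing the logarithmic derivative lemma in a form valid for \emph{every} sufficiently large $r$ rather than for $r$ outside some set of finite linear measure. If one instead starts from the classical formulation (valid only off an exceptional set $E$), one has to bridge across any maximal interval $[r_1,r_2]\subset E$ by exploiting the monotonicity of $T(r,h)$; polynomial growth of $T(r,h)$ in the finite-order case guarantees that $\log T(r_2,h)-\log T(r_1,h)=O(\log r)$ across such an interval, which is exactly what is needed to propagate the $O(\log r)$ estimate to all large $r$ without loss.
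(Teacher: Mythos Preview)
Your proof is correct and follows the standard route: reduce to $k=1$ via the telescoping product $f^{(k)}/f=\prod_{j=0}^{k-1}(f^{(j+1)}/f^{(j)})$, then apply the explicit Poisson--Jensen form of the logarithmic derivative lemma with $R=2r$ and use finite order to bound $\log^{+}T(2r,h)$ by $O(\log r)$.

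The paper itself gives no proof of this lemma; it is simply quoted as a known result from Yang's book \cite[Theorem 1.6]{Yang1993}. So there is nothing to compare against, but what you have written is exactly the standard argument behind that citation. Your final paragraph about bridging across an exceptional set is unnecessary once you invoke the Gol'dberg--Grinstein/Hayman form of the lemma, which already holds for every $r<R$ without exception; you may safely drop it.
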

\vskip 2mm
\par
\begin{remark}\rm{}\label{Remark2.3} Based upon the assumptions of Lemma \ref{Lemma2.2} or the assumptions of Lemma \ref{Lemma2.3}, we additionally suppose that $f$ has finite order. Then, from Lemma \ref{lemma2.4}, the formula \eqref{eq2.1} in Remark \ref{Remark2.1}, the formula \eqref{eq2.2} in Remark \ref{Remark2.2} we deduce that the error term $S(r,f)$ in Lemma \ref{Lemma2.2} and the error term $S(r,f)$ in Lemma \ref{Lemma2.3} can be estimated as $S(r,f)=O(\log r),$ as $r\rightarrow\infty.$
\end{remark}
\vskip 2mm
\par The following result is from Yang-Yi\cite{YangYi2003}:
\begin{lemma}\rm{}(\cite[p.11, Theorem 1.5]{YangYi2003}). \label{Lemma2.5}
If $f$ is a transcendental meromorphic function in the complex plane, then $\lim\limits_{r\rightarrow\infty}\frac{T(r,f)}{\log r}=\infty.$
\end{lemma}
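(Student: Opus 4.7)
My plan is to find a value $a \in \widehat{\mathbb{C}}$ for which the preimages of $a$ under $f$ are so abundant that the counting function $N(r, 1/(f-a))$ grows faster than any constant multiple of $\log r$, and then invoke the first fundamental theorem to conclude that $T(r, f)$ inherits the same growth.

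First, because $f$ is transcendental, the point at infinity is an essential singularity of $f$, so Picard's great theorem supplies a value $a \in \widehat{\mathbb{C}}$ (we may take $a = \infty$ if $f$ has infinitely many poles) for which the unintegrated counting function $n(r, 1/(f-a))$ (read as $n(r, f)$ when $a = \infty$) tends to $+\infty$ as $r \to \infty$. Given any $M > 0$, I would pick $R_M$ with $n(t, 1/(f-a)) \geq M$ for all $t \geq R_M$, substitute into
\[
N(r, 1/(f-a)) = \int_0^r \frac{n(t, 1/(f-a)) - n(0, 1/(f-a))}{t}\, dt + n(0, 1/(f-a)) \log r,
\]
and estimate the integral over $[R_M, r]$ from below by $M \log(r/R_M)$ to obtain $N(r, 1/(f-a)) \geq M \log r - C_M$ for all sufficiently large $r$. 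Dividing by $\log r$ and letting $r \to \infty$ yields $\liminf_{r \to \infty} N(r, 1/(f-a))/\log r \geq M$, and since $M$ was arbitrary, $N(r, 1/(f-a))/\log r \to +\infty$.

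To conclude, the first fundamental theorem gives $T(r, f) = T(r, 1/(f-a)) + O(1) \geq N(r, 1/(f-a)) + O(1)$, where in the case $a = \infty$ one simply uses $T(r, f) \geq N(r, f)$. Dividing by $\log r$ and passing to the limit then gives $T(r, f)/\log r \to +\infty$, as required.

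The only non-elementary ingredient is Picard's great theorem. If a Picard-free argument were preferred, I would split cases: when $f$ has infinitely many poles, the above argument runs with $a = \infty$; when $f$ has only finitely many poles, I would write $f = h/p$ with $h$ entire transcendental and $p$ a polynomial, use Cauchy's coefficient estimates to show that $\log M(r, h)/\log r \to \infty$, and then invoke Hayman's inequality $\log^+ M(r, h) \leq 3\, T(2r, h)$ to transfer the growth statement to the characteristic; the discrepancy between $T(r, f)$ and $T(r, h)$ is only $O(\log r)$ and is absorbed. The main subtlety in either approach is ensuring that the divergence holds as a genuine limit rather than merely a $\limsup$, which the integral estimate on $N$ delivers automatically.
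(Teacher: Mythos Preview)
Your proof is correct. Both the Picard-based route and the alternative case split (infinitely many poles versus finitely many poles, the latter reducing to growth of entire transcendental functions via $\log^+ M(r,h) \leq 3\,T(2r,h)$) are standard and valid; the integral estimate on $N$ does indeed deliver a genuine limit rather than merely a $\limsup$.

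The paper does not supply its own proof of this lemma: it is stated as Lemma~\ref{Lemma2.5} with a citation to \cite[p.~11, Theorem~1.5]{YangYi2003} and used as a black box (for instance, to conclude that certain functions with $T(r,\cdot)=O(\log r)$ must be rational). So there is no in-paper argument to compare against. Your write-up would serve perfectly well as a self-contained justification, and the Picard-free alternative you sketch is arguably preferable in an elementary exposition since it keeps the toolkit within basic Nevanlinna theory and Cauchy estimates.
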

\vskip 2mm
\par  From Lemma \ref{Lemma2.2}(i), Lemma \ref{Lemma2.3}(ii), Lemma \ref{lemma2.4}, Remark \ref{Remark2.1}-Remark \ref{Remark2.3} we get the following result:
\vskip 2mm
\par
\begin{lemma}\rm{}(\cite[proof of Lemma 9]{LiYi2007}). \label{Lemma2.6} Let $f$ and $g$ be two distinct and non-constant meromorphic
functions,  and one of $f$ and $g$ has finite order. Suppose that $f$ and $g$ share $a_1,$ $a_2,$ $a_3,$ $a_4$ IM,
where $a_1,$ $a_2,$ $a_3$ are three distinct finite values and $a_4=\infty.$ Then
\begin{equation}\nonumber
\frac {1}{7} T(r,f)\leq\sum\limits_{j=1}^{3}\sum\limits_{l=1}^{6}\overline{N}_{(1,l)}\left(r,a_j;f,g\right)
+\sum\limits_{j=1}^{3}\sum\limits_{m=2}^{6}\overline{N}_{(m,1)}\left(r,a_j;f,g\right)
+O(\log r),
\end{equation}
 as $r\rightarrow\infty.$ Here and in what follows, $\overline{N}_{(1,l)}\left (r,a_j;f,g \right )$ denotes the reduced counting function of those common zeros of $f-a_j$ and $g-a_j$ in $|z|<r$ that are simple zeros of $f-a_j$  and are zeros of $g-a_j$ of multiplicity $l$ for $1\leq l\leq 6$ with $l\in\Bbb{Z},$
 and $1\leq j\leq 3$ with $j\in\Bbb{Z},$ while $\overline{N}_{(m,1)}\left(r,a_j;f,g\right)$ denotes the reduced counting function of those common zeros of $f-a_j$ and $g-a_j$ in $|z|<r$ that are zeros of $f-a_j$ of multiplicity $m$ and are simple zeros of $g-a_j$ for $m\geq 2$ and $m\in\Bbb{Z}.$
 \end{lemma}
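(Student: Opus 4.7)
The plan is to combine the finite-order refinements of Lemmas \ref{Lemma2.2} and \ref{Lemma2.3} with Nevanlinna's first fundamental theorem applied separately to each of the three finite shared values. Before starting, I note that if both $f$ and $g$ were rational, Lemma \ref{lemma2.1} would force $f\equiv g$, contradicting the distinctness assumption; by Lemma \ref{Lemma2.2}(i) combined with Remark \ref{Remark2.3}, $T(r,g)=T(r,f)+O(\log r)$, so both $f$ and $g$ are transcendental of finite order, and Lemma \ref{Lemma2.5} guarantees $\log r=o(T(r,f))$ as $r\rightarrow\infty$, so every $O(\log r)$ quantity below is absorbed in the stated error.

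For each $j\in\{1,2,3\}$ I decompose the reduced common $a_j$-point counting function according to the pair of multiplicities $(m,l)$ carried by $f$ and $g$ at a shared $a_j$-point:
\begin{equation*}
\overline{N}\!\left(r,\tfrac{1}{f-a_{j}}\right)=A_{j}(r)+B_{j}(r)+C_{j}(r),
\end{equation*}
where $A_{j}(r)=\sum_{l=1}^{6}\overline{N}_{(1,l)}(r,a_{j};f,g)+\sum_{m=2}^{6}\overline{N}_{(m,1)}(r,a_{j};f,g)$ is the target sum, $B_{j}(r)$ collects the one-sided high-multiplicity contributions (pairs $(1,l)$ with $l\geq 7$ and pairs $(m,1)$ with $m\geq 7$), and $C_{j}(r)$ collects all pairs with both $m\geq 2$ and $l\geq 2$. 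Since $f$ and $g$ share $a_j$ IM, the pairs $(m,l)$ exhaust all common $a_j$-points, and the three groups $A_j, B_j, C_j$ are disjoint, so this decomposition is exact.

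The three groups are now bounded by three different tools. First, Lemma \ref{Lemma2.3}(ii) together with Remark \ref{Remark2.3} gives $\sum_{j=1}^{4}N^{\ast}(r,a_{j},f,g)=O(\log r)$; a point of type $(m,l)$ with $m,l\geq 2$ is counted with weight $\min(m,l)\geq 2$ in $N^{\ast}(r,a_j,f,g)$, hence $\sum_{j=1}^{3}C_{j}(r)=O(\log r)$. Second, Lemma \ref{Lemma2.2}(ii) with Remark \ref{Remark2.3}, combined with the trivial bound $\overline{N}(r,f)\leq N(r,f)\leq T(r,f)+O(1)$ from the first fundamental theorem, yields
\begin{equation*}
\sum_{j=1}^{3}\overline{N}\!\left(r,\tfrac{1}{f-a_{j}}\right)=2T(r,f)-\overline{N}(r,f)+O(\log r)\geq T(r,f)+O(\log r).
\end{equation*}
Third, a common $a_{j}$-point of type $(m,1)$ with $m\geq 7$ contributes at least $7$ to the unreduced counting function, so
\begin{equation*}
\sum_{m\geq 7}\overline{N}_{(m,1)}(r,a_{j};f,g)\leq \tfrac{1}{7}N\!\left(r,\tfrac{1}{f-a_{j}}\right)\leq \tfrac{1}{7}T(r,f)+O(1),
\end{equation*}
and symmetrically $\sum_{l\geq 7}\overline{N}_{(1,l)}(r,a_{j};f,g)\leq \tfrac{1}{7}T(r,g)+O(1)=\tfrac{1}{7}T(r,f)+O(\log r)$ using $T(r,g)=T(r,f)+O(\log r)$. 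Summing both bounds over $j=1,2,3$ yields $\sum_{j=1}^{3}B_{j}(r)\leq \tfrac{6}{7}T(r,f)+O(\log r)$. Subtracting the estimates for $\sum_{j}B_{j}$ and $\sum_{j}C_{j}$ from the lower bound on $\sum_{j}\overline{N}(r,1/(f-a_{j}))$ gives exactly $\sum_{j=1}^{3}A_{j}(r)\geq \tfrac{1}{7}T(r,f)+O(\log r)$, which is the claim.

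The only delicate point is the arithmetic of the third estimate: the truncation at multiplicity $7$ is chosen so that the one-sided high-multiplicity loss $\sum_{j=1}^{3}B_{j}$ costs at most $\tfrac{6}{7}T(r,f)+O(\log r)$ when aggregated across the three finite shared values, leaving precisely the announced $\tfrac{1}{7}T(r,f)$ reserve on the right-hand side. Everything else is bookkeeping combining Lemmas \ref{Lemma2.2}--\ref{Lemma2.5} with Nevanlinna's first fundamental theorem, upgraded to $O(\log r)$ error thanks to the finite-order hypothesis via Remark \ref{Remark2.3}.
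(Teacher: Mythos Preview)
Your proof is correct and follows the approach the paper intends: it combines Lemma~\ref{Lemma2.2}(i)--(ii), Lemma~\ref{Lemma2.3}(ii), and the finite-order upgrade to $O(\log r)$ from Remark~\ref{Remark2.3}, exactly as the paper indicates in the sentence introducing Lemma~\ref{Lemma2.6}. The decomposition $A_j+B_j+C_j$, the bound $\sum_jC_j=O(\log r)$ via $N^{\ast}$, the lower bound $\sum_j\overline{N}(r,1/(f-a_j))\geq T(r,f)+O(\log r)$, and the $\tfrac{1}{7}$-loss estimate on each high-multiplicity tail are precisely the computation underlying the cited \cite[proof of Lemma 9]{LiYi2007}.
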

\vskip 2mm
\par
The following result due to Zhang \cite{Zhang1999} improved Lemma 7 from Li-Yang \cite{LiYang1998}:
\vskip 2mm
\par
\begin{lemma}\rm{}(\cite[Lemma 6]{Zhang1999}). \label{lemma2.7}
Let $f_1$ and $f_2$ be two non-constant meromorphic functions such that
\begin{equation} \nonumber
\overline{N}(r,f_j)+\overline{N}\left(r,\frac{1}{f_j}\right)=S(r) \, \, \text{for} \, \,  j\in\{1,2\}.
\end{equation}
Then, either $\overline{N}_0(r,1;f_1,f_2)=S(r),$ or there exist two integers $s$ and $t$ with $|s|+|t|>0,$ such that $f^s_1f^t_2=1.$ Here and in what follows, $\overline{N}_0(r,1;f_1,f_2)$ denotes the reduced counting function of the common $1$-points in $|z|<r,$ and $S(r)$ is any quantity such that $S(r)=o(T(r)),$ as $r\not\in E$ and $r\rightarrow\infty,$ where $T(r)=T(r,f_1)+T(r,f_2),$ and $E\subset \Bbb{R}^{+}$ denotes a set of finite linear measure.
\end{lemma}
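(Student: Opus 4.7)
The plan is to first reduce to studying the logarithmic derivatives and then to run a dichotomy on their rational dependence. Set $\alpha_j:=f_j'/f_j$ for $j=1,2$. Since the poles of $\alpha_j$ are simple and located at zeros or poles of $f_j$, one has $N(r,\alpha_j)\le\overline{N}(r,f_j)+\overline{N}(r,1/f_j)=S(r)$; combined with $m(r,\alpha_j)=S(r,f_j)$ from the logarithmic derivative lemma, this gives $T(r,\alpha_j)=S(r)$ for $j=1,2$. So $\alpha_1$ and $\alpha_2$ are both small functions relative to $T(r)=T(r,f_1)+T(r,f_2)$.

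The easy branch is when $\alpha_1$ and $\alpha_2$ are $\Bbb{Q}$-linearly dependent: if integers $s,t$ with $|s|+|t|>0$ satisfy $s\alpha_1+t\alpha_2\equiv 0$, then the logarithmic derivative of $f_1^s f_2^t$ vanishes identically, so $f_1^s f_2^t\equiv K$ for some nonzero constant $K$. Under the hypothesis $\overline{N}_0(r,1;f_1,f_2)\neq S(r)$ there is at least one common $1$-point of $f_1$ and $f_2$, at which $K=1$; this delivers the second alternative of the lemma at once. So I may assume henceforth that $\alpha_1,\alpha_2$ are $\Bbb{Q}$-linearly independent, and the task is to show that $\overline{N}_0(r,1;f_1,f_2)=S(r)$.

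This latter task is the main obstacle. The plan is to construct a nonzero meromorphic function $\Xi$ with $T(r,\Xi)=S(r)$ that vanishes at the common $1$-points of $f_1,f_2$ up to an $S(r)$-exceptional set, whence $\overline{N}_0(r,1;f_1,f_2)\le\overline{N}(r,1/\Xi)+S(r)\le T(r,\Xi)+S(r)=S(r)$. A first candidate is $\Xi_0:=\alpha_1(f_2-1)-\alpha_2(f_1-1)$; since $f_j'(z_0)=\alpha_j(z_0)$ at any simple common $1$-point $z_0$, one checks that $\Xi_0$ vanishes there to order at least two. However $T(r,\Xi_0)$ is of order $T(r,f_1)+T(r,f_2)$, far too large by itself to conclude. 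The remedy, which is the technical heart of Zhang's construction in \cite{Zhang1999}, is to pass to a quotient of $\Xi_0$ by $(f_1-1)(f_2-1)$ reexpressed in terms of $\alpha_1,\alpha_2,\phi:=\alpha_1/\alpha_2$ and their derivatives; the $\Bbb{Q}$-linear independence of $\alpha_1,\alpha_2$ prevents the simplified expression from vanishing identically, and careful bookkeeping of the small error terms produces the needed bound $T(r,\Xi)=S(r)$. An alternative route is to apply a second main theorem for small target functions to $f_1$ with targets $0,1,\infty$ and a fourth small function whose preimage under $f_1$ meets the common $1$-points of $f_1,f_2$ modulo an $S(r)$-exceptional set.
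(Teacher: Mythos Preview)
The paper does not prove this lemma at all; it merely cites it as \cite[Lemma 6]{Zhang1999} (noting that it sharpens a lemma of Li--Yang). There is therefore no ``paper's own proof'' to compare your proposal against.

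Regarding the proposal itself: the reduction $T(r,\alpha_j)=S(r)$ and the handling of the $\Bbb{Q}$-linearly dependent branch are correct and clean. The gap is in the independent branch. You correctly observe that your candidate $\Xi_0=\alpha_1(f_2-1)-\alpha_2(f_1-1)$ vanishes to second order at simple common $1$-points but has characteristic comparable to $T(r)$, not $S(r)$; at that point, however, you do not construct the small auxiliary function but instead invoke ``the technical heart of Zhang's construction'' in prose. That is precisely the step that needs to be written out: one must produce an explicit differential expression in $f_1,f_2$ (typically built from quantities like $f_j'/f_j$ and $f_j'/(f_j-1)$ and their derivatives) whose proximity term is $S(r)$ by the logarithmic-derivative lemma, whose poles are controlled by the hypothesis $\overline{N}(r,f_j)+\overline{N}(r,1/f_j)=S(r)$, and which vanishes at common $1$-points while being not identically zero under the independence assumption. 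None of those verifications appear. The ``alternative route'' via a second main theorem with small targets is likewise only named; you have not specified the fourth small target, nor explained why its $f_1$-preimage captures the common $1$-points. As written, the hard half of the dichotomy is a plan rather than a proof.
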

\vskip 2mm
\par
The following result is due to Markushevich \cite{Markushevich1965}:
\begin{lemma}\rm{}(\cite{Markushevich1965}). \label{lemma2.8}
Let $Q(z)=q_nz^n+q_{n-1}z^{n-1}+\cdots+q_1z+q_0,$ where $n$ is a positive integer and $q_n=|q_n|e^{i\theta_n}$ with $|q_n|>0$ and $\theta_n \in [0,2\pi).$ For any given positive number $\varepsilon$ satisfying $0<\varepsilon<\frac{\pi}{4n},$ we consider $2n$ angles:
\begin{equation}\nonumber
S_j: -\frac{\theta_{n}}{n}+(2j-1)\frac{\pi}{2n}+\varepsilon<\theta<-\frac{\theta_{n}}{n}+(2j+1)\frac{\pi}{2n}-\varepsilon,
\end{equation}
where $j$ is an integer satisfying $0\leq j\leq 2n-1.$ Then, there exists a positive number $R=R(\varepsilon)$ such that  $\text{Re}(Q(z))>|q_n|(1-\varepsilon)r^n\sin(n\varepsilon)$ for $z\in S_j$ with $|z|=r>R,$ where $j$ is even, and $\text{Re}(Q(z))<-|q_{n}|(1-\varepsilon)r^n\sin(n\varepsilon)$ for $z\in S_j$ with $|z|=r>R,$ where $j$ is odd.
\end{lemma}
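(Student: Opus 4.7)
The plan is to reduce everything to the behavior of the leading term $q_n z^n$ and treat the lower-order terms as a negligible perturbation of size $O(r^{n-1})$. Writing $z = re^{i\theta}$, one has
\begin{equation}\nonumber
\text{Re}(q_n z^n) = |q_n|r^n \cos(n\theta + \theta_n),
\end{equation}
so the entire task becomes controlling $\cos(n\theta+\theta_n)$ on each sector $S_j$ and absorbing the error coming from $q_{n-1}z^{n-1}+\cdots+q_0$.

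First I would compute the range of $n\theta+\theta_n$ when $z\in S_j$: multiplying the defining inequality by $n$ and adding $\theta_n$ gives
\begin{equation}\nonumber
(2j-1)\frac{\pi}{2}+n\varepsilon < n\theta+\theta_n < (2j+1)\frac{\pi}{2}-n\varepsilon,
\end{equation}
an interval of length $\pi-2n\varepsilon>0$ centered at $j\pi$. For even $j$, $\cos$ attains its minimum on this interval at the two endpoints, where its value is $\sin(n\varepsilon)$, so $\cos(n\theta+\theta_n)\geq \sin(n\varepsilon)$; for odd $j$, by the same reasoning $\cos(n\theta+\theta_n)\leq -\sin(n\varepsilon)$. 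Thus on $|z|=r$ with $z\in S_j$,
\begin{equation}\nonumber
\text{Re}(q_n z^n)\geq |q_n|r^n\sin(n\varepsilon) \quad (j \text{ even}), \qquad \text{Re}(q_n z^n)\leq -|q_n|r^n\sin(n\varepsilon) \quad (j\text{ odd}).
\end{equation}

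Next I would control the remainder $P(z):=Q(z)-q_nz^n = q_{n-1}z^{n-1}+\cdots+q_0$. Setting $M:=|q_{n-1}|+\cdots+|q_0|$, the triangle inequality gives $|P(z)|\leq Mr^{n-1}$ for $r\geq 1$, and hence $|\text{Re}(P(z))|\leq Mr^{n-1}$. Combining the two estimates, for even $j$ and $z\in S_j$ with $|z|=r$,
\begin{equation}\nonumber
\text{Re}(Q(z)) \;\geq\; |q_n|r^n\sin(n\varepsilon)-Mr^{n-1} \;=\; |q_n|r^n\sin(n\varepsilon)\left(1-\frac{M}{|q_n|r\sin(n\varepsilon)}\right).
\end{equation}
The bracketed factor exceeds $1-\varepsilon$ precisely when $r>\frac{M}{\varepsilon |q_n|\sin(n\varepsilon)}$. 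The symmetric computation for odd $j$ yields $\text{Re}(Q(z))\leq -|q_n|(1-\varepsilon)r^n\sin(n\varepsilon)$ under the same lower bound on $r$. Thus choosing
\begin{equation}\nonumber
R=R(\varepsilon):=\max\left\{1,\, \frac{M}{\varepsilon|q_n|\sin(n\varepsilon)}\right\}
\end{equation}
gives both desired inequalities simultaneously.

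There is essentially no serious obstacle in this argument; it is a bookkeeping exercise. The only mild subtlety I would be careful about is verifying that the interval $((2j-1)\pi/2+n\varepsilon,(2j+1)\pi/2-n\varepsilon)$ is non-empty (which uses the hypothesis $\varepsilon<\pi/(4n)$, hence $n\varepsilon<\pi/4<\pi/2$) and that the minimum/maximum of $\cos$ on this interval really occurs at the endpoints and equals $\pm\sin(n\varepsilon)$—this is immediate from monotonicity of $\cos$ on $[j\pi,j\pi+\pi/2]$ and its even symmetry about $j\pi$, but is worth stating explicitly so the sign pattern between even and odd $j$ is transparent.
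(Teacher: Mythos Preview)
Your proof is correct and is the standard elementary argument for this result. The paper does not give its own proof of this lemma but simply cites it from Markushevich \cite{Markushevich1965}; your approach---isolating the leading term, computing the range of $n\theta+\theta_n$ on each sector to bound $\cos(n\theta+\theta_n)$ by $\pm\sin(n\varepsilon)$, and absorbing the $O(r^{n-1})$ remainder---is exactly the textbook argument one finds there.
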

Next we introduce the Nevanlinna theory of meromorphic functions in one angular domain that will play a key role in the proof of the main result: let $f$ be a meromorphic function on the angular domain $\overline{\Omega}(\alpha,\beta)=\{z\in\Bbb{C}:\alpha\leq \arg z\leq \beta\},$ where $0<\beta-\alpha\leq 2\pi.$ Following Goldberg-Ostrovskii \cite[p.25]{Goldberg1970}, we define
\begin{equation}\nonumber
\begin{aligned}
&A_{\alpha,\beta}(r,f)=\frac{\omega}{\pi}\int_{1}^{r}\left(\frac{1}{t^{\omega}}-\frac{t^{\omega}}{r^{2\omega}}\right)\{\log ^{+}|f(te^{i\alpha})|+\log ^{+}|f(te^{i\beta})|\}\frac{dt}{t},\\
&B_{\alpha,\beta}(r,f)=\frac{2\omega}{\pi r^{\omega}}\int_{\alpha}^{\beta}\log ^{+}|f(re^{i\theta})|\sin\omega(\theta-\alpha)d\theta,\\
&C_{\alpha,\beta}(r,f)=2\sum\limits_{1<|b_{m}|<r}\left(\frac{1}{|b_{m}|^{\omega}}-\frac{|b_{m}|^{\omega}}{r^{2\omega}}\right)\sin\omega(\theta_{m}-\alpha),
\end{aligned}
\end{equation}
where $\omega=\frac{\pi}{\beta-\alpha}$ and $\{b_m\} _{m=1}^{+\infty} \subset \overline{\Omega}(\alpha,\beta)$  with $b_m=|b_m|e^{i\theta_{m}}$ and $m\in\Bbb{Z}^{+}$ is the sequence of poles of $f$ on $\overline{\Omega}(\alpha,\beta),$ and each distinct point in the sequence $\{b_m\} _{m=1}^{+\infty}$ is repeated as many times as its multiplicity of a pole of $f$ on $\overline{\Omega}(\alpha,\beta).$ We denote by $\overline{C}_{\alpha,\beta}(r,f)$ the reduced form of $C_{\alpha,\beta}(r,f).$ In other word, if each point in the above expression of $C_{\alpha,\beta}(r,f)$ is counted only once for each distinct point in the sequence $\{b_m\} _{m=1}^{+\infty},$ we denote $\overline{C}_{\alpha,\beta}(r,f)$ instead of $C_{\alpha,\beta}(r,f).$
We call $C_{\alpha,\beta}(r,f)$ the Nevanlinna$'$s angular counting function of poles of $f$ on $\overline{\Omega}(\alpha,\beta).$
The Nevanlinna$'$s angular characteristic function is defined as $S_{\alpha,\beta}(r,f)=A_{\alpha,\beta}(r,f)+B_{\alpha,\beta}(r,f)+C_{\alpha,\beta}(r,f).$ We recall the following two results are due to Zheng \cite{Zheng2003}:
\vskip 2mm
\par
\begin{lemma}\rm{}(\cite[Lemma 3]{Zheng2003}). \label{lemma2.9}
Let $f$ be meromorphic on $\overline{\Omega}(\alpha,\beta).$ Then for arbitrary complex number $a\in\Bbb{C},$ we have
\begin{equation}\nonumber
S_{\alpha,\beta}\left(r,\frac{1}{f-a}\right)=S_{\alpha,\beta}(r,f)+O(1).
\end{equation}
\end{lemma}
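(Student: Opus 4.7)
The plan is to split the claim into two $O(1)$ identities whose chain yields the lemma. The pointwise bound $\bigl|\log^{+}|f-a|-\log^{+}|f|\bigr|\leq\log^{+}|a|+\log 2$, inserted into the three defining integrals, gives $S_{\alpha,\beta}(r,f-a)=S_{\alpha,\beta}(r,f)+O(1)$: the $A$-term difference is controlled by the bounded quantity $\int_{1}^{r}(t^{-\omega}-t^{\omega}/r^{2\omega})\,dt/t$, which tends to $1/\omega$; the $B$-term difference carries a prefactor $r^{-\omega}$ and is therefore $O(1)$; and the $C$-terms coincide identically because $f$ and $f-a$ share the same poles in the sector $\overline{\Omega}(\alpha,\beta)$. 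It therefore remains to establish the second identity $S_{\alpha,\beta}(r,1/g)=S_{\alpha,\beta}(r,g)+O(1)$ for $g=f-a$.

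This second identity is the angular analogue of Nevanlinna's first fundamental theorem and carries the main content. Writing $\log|g|=\log^{+}|g|-\log^{+}(1/|g|)$, one substitutes into the angular Poisson--Jensen formula for the sector $\overline{\Omega}(\alpha,\beta)$, i.e.\ the identity expressing $\int_{\alpha}^{\beta}\log|g(re^{i\theta})|\sin\omega(\theta-\alpha)\,d\theta$ as a combination of boundary line-integrals along the two rays $\arg z=\alpha$ and $\arg z=\beta$ against the kernel $t^{-\omega}-t^{\omega}/r^{2\omega}$, plus a sum over the zeros and poles of $g$ in the sector weighted by $(|b|^{-\omega}-|b|^{\omega}/r^{2\omega})\sin\omega(\arg b-\alpha)$, plus a bounded term depending only on a fixed normalization of $g$ (for instance at $z=1$). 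Grouping the $\log^{+}$ and $\log^{-}$ pieces on each side, the identity rearranges to $A_{\alpha,\beta}(r,g)+B_{\alpha,\beta}(r,g)+C_{\alpha,\beta}(r,g)=A_{\alpha,\beta}(r,1/g)+B_{\alpha,\beta}(r,1/g)+C_{\alpha,\beta}(r,1/g)+O(1)$, which is precisely what is wanted.

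The main obstacle will be the bookkeeping in this rearrangement: one must verify that the boundary contributions produced by the Poisson--Jensen formula for the sector reassemble into exactly the $A_{\alpha,\beta}$-terms on both sides, with the specific kernel $t^{-\omega}-t^{\omega}/r^{2\omega}$ appearing in the paper's definition, and that the discrete contributions from zeros of $g$ on one side and poles of $g$ on the other reproduce $C_{\alpha,\beta}(r,1/g)$ and $C_{\alpha,\beta}(r,g)$ with the correct signs and weights. Once this angular first fundamental theorem is in hand, combining it with the first estimate and absorbing all additive constants into the error term gives the conclusion $S_{\alpha,\beta}(r,1/(f-a))=S_{\alpha,\beta}(r,f)+O(1)$, which is exactly the statement of Lemma \ref{lemma2.9}.
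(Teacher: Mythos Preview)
The paper does not prove this lemma; it is quoted from \cite{Zheng2003} (and ultimately from Goldberg--Ostrovskii \cite{Goldberg1970}) without argument, so there is no in-paper proof to compare against. Your two-step strategy---first reducing to $g=f-a$ via the pointwise bound $\bigl|\log^{+}|f-a|-\log^{+}|f|\bigr|\le\log^{+}|a|+\log 2$, then invoking the angular first fundamental theorem $S_{\alpha,\beta}(r,1/g)=S_{\alpha,\beta}(r,g)+O(1)$---is exactly the standard route, and your analysis of the $A$-, $B$-, and $C$-terms in the first step is correct. The second step is indeed the substantive one: the relevant identity is the Carleman--Nevanlinna (or Levin) formula for the sector, which expresses $\frac{2\omega}{\pi r^{\omega}}\int_{\alpha}^{\beta}\log|g(re^{i\theta})|\sin\omega(\theta-\alpha)\,d\theta$ in terms of the ray integrals and the zero/pole sum with precisely the kernels appearing in the definitions of $A_{\alpha,\beta}$ and $C_{\alpha,\beta}$, plus a bounded remainder. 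You have identified this correctly but left the derivation as a sketch; to make the proof complete you would need to state that formula explicitly (it follows from the Poisson--Jensen formula for the half-disk $\{|w|<R,\ \mathrm{Im}\,w>0\}$ transported by $w=z^{\omega}e^{-i\omega\alpha}$, as in \cite[Chapter~1, \S2]{Goldberg1970}) and check that the constants coming from the fixed normalization point are indeed $O(1)$ in $r$.
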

\vskip 2mm
\par
\begin{lemma}\rm{}(\cite[Lemma 4]{Zheng2003}). \label{lemma2.10}
Let $f$ be meromorphic on $\overline{\Omega}(\alpha,\beta).$ Then for arbitrary $q$ distinct $a_{1},a_{2},\cdots,a_{q}$ in $\Bbb{C}\cup\{\infty\},$ we have
\begin{equation}\nonumber
(q-2)S_{\alpha,\beta}(r,f)\leq \sum\limits_{j=1}^{q}\overline{C}_{\alpha,\beta}\left(r,\frac{1}{f-a_{j}}\right)+R_{\alpha,\beta}(r,f),
\end{equation}
where the term $\overline{C}_{\alpha,\beta}\left(r,\frac{1}{f-a_{j}}\right)$ will be replaced with $\overline{C}_{\alpha,\beta}\left(r,f\right)$ when $a_{j}=\infty$ for some positive integer $j$ satisfying $1\leq j\leq q.$
\end{lemma}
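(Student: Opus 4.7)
The plan is to transcribe Nevanlinna's classical proof of the Second Main Theorem into the angular setting, replacing the Nevanlinna characteristic $T(r,f)$ by $S_{\alpha,\beta}(r,f)$, the proximity function by $A_{\alpha,\beta}(r,\cdot)+B_{\alpha,\beta}(r,\cdot)$, and the counting function by $C_{\alpha,\beta}(r,\cdot)$. By composing with a M\"obius transformation and invoking Lemma~\ref{lemma2.9}, I may assume $a_q=\infty$, so that the remaining target values $b_1,\dots,b_{q-1}$ are finite. Following Nevanlinna's partial-fractions trick, introduce
\[
\Phi(z)=\sum_{j=1}^{q-1}\frac{f'(z)}{f(z)-b_j}=\frac{[Q(f)]'(z)}{Q(f)(z)},\qquad Q(w)=\prod_{j=1}^{q-1}(w-b_j),
\]
so that $\Phi$ is a logarithmic derivative of $Q(f)$. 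For every fixed $z$ at most one of the quantities $|f(z)-b_j|^{-1}$ can be very large, so a pointwise argument yields
\[
\sum_{j=1}^{q-1}\log^+\!\left|\frac{1}{f(z)-b_j}\right|\le \log^+\!\left|\frac{1}{\Phi(z)}\right|+\log^+|f'(z)|+O(1).
\]

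Next, the decisive technical input is the angular lemma on the logarithmic derivative established by Zheng \cite{Zheng2003}, which states that $(A_{\alpha,\beta}+B_{\alpha,\beta})(r,h'/h)=R_{\alpha,\beta}(r,h)$ for every meromorphic $h$ on $\overline{\Omega}(\alpha,\beta)$, where $R_{\alpha,\beta}$ is by definition the error term that appears in the stated inequality. Applying this both to $f$ (to handle $f'$) and to $\Phi$ (which is itself a logarithmic derivative), and integrating the pointwise inequality above against the angular means, I obtain
\[
\sum_{j=1}^{q-1}(A_{\alpha,\beta}+B_{\alpha,\beta})\!\left(r,\frac{1}{f-b_j}\right)\le (A_{\alpha,\beta}+B_{\alpha,\beta})(r,f)+R_{\alpha,\beta}(r,f).
\]
Combining this with Lemma~\ref{lemma2.9} applied to $f$ and to each $1/(f-b_j)$, which converts each $A+B$ term into $S_{\alpha,\beta}-C_{\alpha,\beta}$, and then rearranging, yields the weak form
\[
(q-2)S_{\alpha,\beta}(r,f)\le C_{\alpha,\beta}(r,f)+\sum_{j=1}^{q-1}C_{\alpha,\beta}\!\left(r,\frac{1}{f-b_j}\right)-C_{\alpha,\beta}\!\left(r,\frac{1}{f'}\right)+R_{\alpha,\beta}(r,f).
\]

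Finally, to pass from the unreduced $C_{\alpha,\beta}$ to the reduced $\overline{C}_{\alpha,\beta}$ in the statement, I recall that at a $b_j$-point of multiplicity $m$ the derivative $f'$ has a zero of order $m-1$, so the excess $C_{\alpha,\beta}-\overline{C}_{\alpha,\beta}$ arising at multiple $b_j$-points is precisely absorbed by the negative term $-C_{\alpha,\beta}(r,1/f')$, producing the stated inequality. The hardest step, and indeed the heart of the whole argument, will be the angular lemma on the logarithmic derivative: the classical Poisson--Jensen derivation on a disc has to be replaced by a Carleman-type representation on the sector $\overline{\Omega}(\alpha,\beta)$, and the boundary integrals along the two bounding rays $\arg z=\alpha,\beta$ must be controlled carefully so that the resulting $R_{\alpha,\beta}(r,f)$ genuinely plays the role of a small-function error. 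Once that ingredient is in hand, the remaining steps are a mechanical adaptation of Nevanlinna's classical calculation.
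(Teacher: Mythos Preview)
The paper does not prove this lemma; it is quoted as \cite[Lemma~4]{Zheng2003} and used as a black box. Your sketch is the standard derivation one finds in Zheng's work (and in Goldberg--Ostrovskii for the angular Nevanlinna theory): reduce to finite targets, apply the angular logarithmic-derivative lemma to the auxiliary function $\Phi=\sum_j f'/(f-b_j)$, convert proximity terms to counting terms via the angular first main theorem (Lemma~\ref{lemma2.9}), and absorb the multiplicity excess into the $-C_{\alpha,\beta}(r,1/f')$ term. So your approach is correct and matches the source the paper cites; there is nothing further to compare.
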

\vskip 2mm
\par From Zheng \cite[p.81, Lemma 2.5.3]{Zheng2009} and the formula
 \begin{equation}\nonumber
 m_{\alpha,\beta}(r,f)=\frac{1}{2\pi}\int_{\alpha}^{\beta}\log^+|f(re^{\theta})|d\theta \, \, \text{with} \, \, 0\leq \alpha<\beta\leq 2\pi
 \end{equation}
in Zheng \cite[p.56]{Zheng2009}, we get the following result:
\begin{lemma}\rm{}(\cite[Lemma 2.5.3]{Zheng2009}). \label{lemma2.11}
Let $f$ be a meromorphic function in the complex plane. For any positive number $r$ such that $0<r<R,$ we have
\begin{equation}\label{eq2.3}
\begin{aligned}
&\quad R_{\alpha,\beta}(r,f)\\
&\leq K\left(\left(\frac{R}{r}\right)^{\omega}\int_{1}^{R}\frac{\log T(t,f)}{t^{1+\omega}}dt+\log \frac{r}{R-r}+\log \frac{R}{r}+\frac{1}{r^{\omega}}m\left(r,\frac{f'}{f}\right)+1\right),
\end{aligned}
\end{equation}
where $\omega=\frac{\pi}{\beta-\alpha}$ with $0\leq \alpha<\beta\leq 2\pi,$ and $K$ is a constant independent of $r$ and $R.$
\end{lemma}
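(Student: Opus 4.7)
The plan is to derive the angular estimate \eqref{eq2.3} by pulling back the sector $\overline{\Omega}(\alpha,\beta)$ to a half-plane via the branch $w=z^{\omega}$ with $\omega=\pi/(\beta-\alpha)$, since $f$ on the sector then corresponds to a meromorphic function $F(w)=f(w^{1/\omega})$ on the upper half-plane $\{\mathrm{Im}\,w>0\}$. The angular functionals $A_{\alpha,\beta},B_{\alpha,\beta},C_{\alpha,\beta}$ are precisely the standard Carleman/Tsuji-type ingredients for $F$ on the half-plane of radius $r^{\omega}$, and $R_{\alpha,\beta}(r,f)$ plays the role of the angular proximity function of the logarithmic derivative $f'/f$. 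This reduces the problem to the classical question of bounding the half-plane proximity function of $F'/F$ by the Nevanlinna characteristic of $F$ on a larger region, and then translating back in the $z$-variable.

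Next I would write out a Poisson--Jensen representation of $\log F$ on the half-disk $\{|w|<R^{\omega},\,\mathrm{Im}\,w>0\}$, obtained either by reflection across the real axis followed by the disk Poisson--Jensen formula, or directly via the Green's function of the half-disk. Differentiating this representation term-by-term expresses $F'/F$ as a sum of three types of contributions: a boundary integral over the arc $\{|w|=R^{\omega}\}\cap\{\mathrm{Im}\,w>0\}$, boundary integrals over the real segment $[-R^{\omega},R^{\omega}]$, and discrete sums over zeros and poles of $F$. Each derivative of the Poisson kernel admits the familiar pointwise bound, and after taking $\log^{+}$ and integrating in $\theta$ with the weight $\sin\omega(\theta-\alpha)$ that defines the angular proximity, the three contributions are estimated as follows. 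The circular arc yields a term of order $(R/r)^{\omega}\cdot\sup_{|w|=R^\omega}\log^{+}|F|$, which, after invoking a standard maximum-modulus estimate $\log^{+}M(t,F)\lesssim T(t',F)$ on a slightly larger radius and integrating, produces the weighted integral $(R/r)^{\omega}\int_{1}^{R}t^{-1-\omega}\log T(t,f)\,dt$. The segment on the real axis corresponds exactly to the two rays $\arg z=\alpha,\arg z=\beta$, and contributes a term reducible to the ordinary proximity $m(r,f'/f)$ multiplied by $r^{-\omega}$. The pole/zero sums are absorbed into $\overline{C}_{\alpha,\beta}$-type quantities already separated out in the definition of $R_{\alpha,\beta}$, leaving only a bounded remainder plus the cutoff logarithmic terms $\log(r/(R-r))$ and $\log(R/r)$ that come from estimating the Poisson kernel near the boundary of the half-disk.

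Assembling these three estimates and multiplying through by the normalizing constants that appear in the definitions of $A_{\alpha,\beta}$, $B_{\alpha,\beta}$ and $\overline{C}_{\alpha,\beta}$ gives the inequality \eqref{eq2.3} with an absolute constant $K$ depending only on the angular aperture. Everything here is then independent of $r$ and $R$ except through the explicit weights that appear on the right side.

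The main obstacle is the careful bookkeeping in the differentiation of the Poisson--Jensen formula on the half-disk: one must separate cleanly the contribution of the boundary rays (which will be $m(r,f'/f)/r^{\omega}$ after the change of variable), the contribution of the arc $|w|=R^{\omega}$ (where the factor $(R/r)^{\omega}$ arises from the Poisson kernel and the growth lemma $\log^{+}M(t,F)\lesssim T(t',F)$ must be applied on a slightly inflated radius and then integrated by parts against $t^{-\omega-1}$), and the discrete zero/pole terms (which must be shown to combine into an already-present angular counting function so they do not contaminate the bound). Once this decomposition is in place, the remainder of the argument is a sequence of routine weighted-kernel estimates.
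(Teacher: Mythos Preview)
The paper does not supply a proof of this lemma at all: it is quoted verbatim as \cite[Lemma 2.5.3]{Zheng2009} and used as a black box (the only argument the paper gives in this vicinity is the short derivation of Lemma~\ref{Lemma2.12} from Lemma~\ref{lemma2.11}). So there is no ``paper's own proof'' to compare against; the authors simply import the result from Zheng's monograph.

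That said, your outline is broadly the standard route taken in Goldberg--Ostrovskii and in Zheng's book: one works with the Nevanlinna theory on the half-disk (equivalently, the sector via the conformal map $z\mapsto z^{\omega}$), writes the differentiated Poisson--Jensen formula there, and estimates the boundary-arc, boundary-segment, and zero/pole contributions separately. Two points of caution if you actually carry this out. First, the quantity $R_{\alpha,\beta}(r,f)$ is the error term in the angular second main theorem (Lemma~\ref{lemma2.10}), not literally the angular proximity of $f'/f$; it is built from $A_{\alpha,\beta}(r,f'/f)+B_{\alpha,\beta}(r,f'/f)$ together with bounded pieces, so you should state precisely which functional you are bounding before decomposing. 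Second, the appearance of the integral $\int_{1}^{R}t^{-1-\omega}\log T(t,f)\,dt$ does not come from a single maximum-modulus bound followed by integration by parts as you suggest; in Zheng's argument it arises from estimating $A_{\alpha,\beta}(r,f'/f)$ via the known pointwise bound on $\log^{+}|f'/f|$ along the rays $\arg z=\alpha,\beta$ (essentially the Gol'dberg--Grinshtein estimate), and that bound already contains $\log T(t,f)$ inside an integral. Your sketch conflates this with the arc contribution, which is where the $\log\frac{r}{R-r}+\log\frac{R}{r}$ terms actually originate. The overall architecture is right, but the attribution of terms to the three pieces of the boundary needs to be corrected before the estimates will close.
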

\vskip 2mm
\par From Remark \ref{Remark2.3}, Lemma \ref{lemma2.4} and Lemma \ref{lemma2.11} with $r\in (0, +\infty)$ and $R=2r,$ we deduce the following result:
\vskip 2mm
\par
\begin{lemma}\rm{}\label{Lemma2.12}
Based upon the assumptions of Lemma \ref{lemma2.11}, we have the following conclusion: suppose that $f$ is a non-constant meromorphic function of finite order in the complex plane. Then, we have $R_{\alpha,\beta}(r,f)=O(1),$ when $r\in (0, +\infty),$ $R=2r$ and $r\rightarrow\infty.$
\end{lemma}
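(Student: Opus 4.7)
The plan is to substitute $R=2r$ directly into the inequality \eqref{eq2.3} supplied by Lemma \ref{lemma2.11}, and then show that each of the four contributions on the right-hand side is $O(1)$ as $r\rightarrow\infty$.

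First, the elementary factors. With $R=2r$ one has $(R/r)^{\omega}=2^{\omega}$, $\log(r/(R-r))=\log 1=0$, and $\log(R/r)=\log 2$. Since the constant $K$ in \eqref{eq2.3} is independent of $r$ and $R$, these three pieces together contribute a bounded amount.

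Next, the logarithmic derivative term $r^{-\omega}m(r,f'/f)$. Because $f$ has finite order, Lemma \ref{lemma2.4} yields $m(r,f'/f)=O(\log r)$ as $r\rightarrow\infty$. Since $0\leq\alpha<\beta\leq 2\pi$ forces $\omega=\pi/(\beta-\alpha)\geq 1/2>0$, the quantity $r^{-\omega}m(r,f'/f)$ actually tends to $0$, and is certainly $O(1)$.

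The only nontrivial step is the integral $2^{\omega}\int_{1}^{2r}\frac{\log T(t,f)}{t^{1+\omega}}\,dt$. Since $f$ has finite order $\rho<\infty$, for any fixed $\varepsilon>0$ one has $T(t,f)\leq t^{\rho+\varepsilon}$ for all sufficiently large $t$, so that $\log T(t,f)\leq (\rho+\varepsilon)\log t$ eventually. Combined with the boundedness of $\log T(t,f)$ on a finite initial segment, this gives $\log T(t,f)=O(\log t)$ on $[1,\infty)$. Since $\omega>0$, the improper integral $\int_{1}^{\infty}\frac{\log t}{t^{1+\omega}}\,dt$ converges, so $\int_{1}^{2r}\frac{\log T(t,f)}{t^{1+\omega}}\,dt$ is bounded in $r$, and this contribution is also $O(1)$. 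Assembling the four estimates proves $R_{\alpha,\beta}(r,f)=O(1)$ as $r\rightarrow\infty$. I foresee no real obstacle: the argument is a direct substitution together with the finite-order growth bound on $T(r,f)$ and Lemma \ref{lemma2.4}; the only point that merits explicit mention is the strict positivity of $\omega$, which guarantees convergence of the defining integral.
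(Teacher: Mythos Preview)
Your proof is correct and follows essentially the same approach as the paper: substitute $R=2r$ into \eqref{eq2.3}, use finite order to control $\log T(t,f)$ in the integral, and use Lemma \ref{lemma2.4} for the logarithmic-derivative term. The only cosmetic difference is that the paper inserts an intermediate bound $\log T(t,f)\leq M_1\log t\leq t^{\delta_1}$ with $\delta_1<\omega$ before integrating, whereas you integrate $\frac{\log t}{t^{1+\omega}}$ directly; your route is slightly more direct but the substance is identical.
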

 \begin{proof} Suppose that $f$ is a non-constant meromorphic function of finite order in the complex plane. Then, from Lemma \ref{lemma2.4} we see that there exists a large positive number $t_0$ and a positive constant $M_1$ such that $t_0\geq 1$ and $\log T(t,f)\leq M_1\log t\leq t^{\delta_1}$ when $1\leq t_0\leq t\leq R$ with $R=2r,$ where $\delta_1$ is a positive constant such that $1+\omega-\delta_1>1.$ Combining this with Lemma \ref{lemma2.4}, the formula \eqref{eq2.3} in Lemma \ref{lemma2.11} for $r\in (0, +\infty)$ and $R=2r,$ we have
\begin{equation}\nonumber
\begin{aligned}
&\quad R_{\alpha,\beta}(r,f)\leq K\left(2^{\omega}\int_{1}^{2r}\frac{\log T(t,f)}{t^{1+\omega}}dt+\frac{M_2}{r^{\omega}}\log r +\log 2+1\right)\\
&=K\left(2^{\omega}\int_{1}^{t_0}\frac{\log T(t,f)}{t^{1+\omega}}dt+2^{\omega}\int_{t_0}^{2r}\frac{\log T(t,f)}{t^{1+\omega}}dt+\frac{M_2}{r^{\omega}} \log r+\log 2 +1\right)\\
&\leq  K\left(2^{\omega}\int_{t_0}^{2r}\frac{t^{\delta_1}}{t^{1+\omega}}dt+\frac{M_2}{r^{\omega}} \log r+\log 2 +1+O(1)\right)\\
&= K\left(2^{\omega}\int_{t_0}^{2r}\frac{1}{t^{1+\omega-\delta_1}}dt+\frac{M_2}{r^{\omega}} \log r+\log 2 +1+O(1)\right)=O(1),\\
\end{aligned}
\end{equation}
as $r\rightarrow\infty.$ Here and in what follows, $M_2$ is a positive constant such that $M_2\geq M_1.$ This reveals the conclusion of Lemma \ref{Lemma2.12}.
\end{proof}
\vskip 2mm
\par The following result was originally proved in Goldberg-Ostrovskii\cite{Goldberg1970}:
\vskip 2mm
\par
\begin{lemma}\rm{}(\cite[p.291, Theorem 2.7]{Goldberg1970})\label{Lemma2.13}
Let $f(z)$ be a meromorphic function on the angular domain $\overline{\Omega}(\alpha,\beta)=\{z\in\Bbb{C}:\alpha\leq \arg z\leq \beta\},$ where $0<\beta-\alpha\leq 2\pi.$ If $S_{\alpha,\beta}(r,f)=O(1),$ then there exists a set $F\subset (0, +\infty)$ of finite logarithmic measure, such that when $r\not\in F$ and $r\rightarrow\infty,$ the relation
\begin{equation}\nonumber
\log |f(re^{i\theta})|=cr^{\omega}\sin(\omega(\theta-\alpha))+o(r^{\omega})
\end{equation}
holds uniformly in $\theta$ with $\alpha\leq\theta\leq \beta.$ Here $c$ is a real constant and $\omega=\frac{\pi}{\beta-\alpha}.$
\end{lemma}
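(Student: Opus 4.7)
The plan is to combine the Carleman--Levin representation of $\log|f|$ on the sector with the hypothesis $S_{\alpha,\beta}(r,f)=O(1)$ applied symmetrically to $f$ and to $1/f$, and then extract the leading radial asymptotic from what remains.

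First, I would apply Lemma \ref{lemma2.9} with $a=0$ to deduce $S_{\alpha,\beta}(r,1/f)=O(1)$ as well. Consequently, in addition to $A_{\alpha,\beta}(r,f)$, $B_{\alpha,\beta}(r,f)$, $C_{\alpha,\beta}(r,f)$ all being bounded, the three corresponding quantities for $1/f$ are bounded. In particular, both $\overline{C}_{\alpha,\beta}(r,f)$ and $\overline{C}_{\alpha,\beta}(r,1/f)$ are $O(1)$, so that the zeros and poles of $f$ inside the sector contribute only a finite weighted sum to any sector representation formula, while the boundary radial integrals along $\arg z=\alpha$ and $\arg z=\beta$ are simultaneously controlled for $f$ and $1/f$, hence essentially controlled in absolute value (not merely in $\log^{+}$).

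Next, I would invoke the Carleman--Levin (Nevanlinna-type) representation of $\log|f(z)|$ on the truncated sector $\overline{\Omega}(\alpha,\beta)\cap\{|z|<R\}$. For $z=re^{i\theta}$ with $\alpha<\theta<\beta$ and $r<R$, that identity expresses $\log|f(re^{i\theta})|$ as the sum of a Poisson-type integral of $\log|f|$ along the two radial sides $\arg z=\alpha,\beta$, an integral of $\log|f|$ over the arc $|z|=R$ against the appropriate sector Poisson kernel, and a discrete sum over the zeros and poles of $f$ inside the truncated sector, each term weighted by an explicit kernel whose $\theta$-dependence is $\sin(\omega(\theta-\alpha))$. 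Dividing by $r^{\omega}\sin(\omega(\theta-\alpha))$ and letting $R\to\infty$ along a suitable sequence, the bounds on $A_{\alpha,\beta}(r,f^{\pm1})$ force the ray contributions to be $o(r^{\omega})$ uniformly in $\theta$, the bounds on $C_{\alpha,\beta}(r,f^{\pm1})$ force the zero--pole sum to be $o(r^{\omega})$, and the dominant arc term isolates a contribution whose $\theta$-dependence is exactly $\sin(\omega(\theta-\alpha))$ with a fixed real coefficient $c$; this yields the claimed asymptotic.

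The hardest part will be the selection of the truncation radii $R$: on a small set of radii the Poisson integral over $|z|=R$ need not stay close to its average, so one must choose $R=R(r)$ via a standard covering argument and discard a set $F\subset(0,+\infty)$ of finite logarithmic measure. This exceptional-set mechanism is precisely the one underlying the proof of Theorem~2.7 in Goldberg--Ostrovskii \cite{Goldberg1970}; the plan is to follow their setup, using Lemma \ref{lemma2.9} at the outset so that the Carleman--Levin identity reduces cleanly to the stated asymptotic once all the bounded terms are absorbed into the $o(r^{\omega})$ remainder.
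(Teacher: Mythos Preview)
The paper does not supply its own proof of this lemma; it is quoted verbatim as \cite[p.291, Theorem 2.7]{Goldberg1970} and used as a black box. So there is nothing in the paper to compare your argument against beyond the citation itself, and your plan explicitly says it will follow Goldberg--Ostrovskii's Theorem~2.7 setup, which is exactly what the paper relies on.

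As a sketch your outline is reasonable and captures the right ingredients: symmetry via Lemma~\ref{lemma2.9} to control $S_{\alpha,\beta}(r,1/f)$, the Carleman--Levin integral representation on a truncated sector, and the extraction of the $\sin(\omega(\theta-\alpha))$ term with a logarithmic-measure exceptional set from the arc integral. One small caution: you write ``dividing by $r^{\omega}\sin(\omega(\theta-\alpha))$'', but the claimed asymptotic is uniform on the closed interval $\alpha\le\theta\le\beta$, where that factor vanishes at the endpoints; in the actual argument the $\sin(\omega(\theta-\alpha))$ dependence is not obtained by division but comes out of the kernel structure itself, and the $o(r^{\omega})$ error must be established uniformly in $\theta$ without that division. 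This is handled in Goldberg--Ostrovskii, so if you are genuinely following their proof it will not cause trouble, but your phrasing as written would not give uniformity at the boundary rays.
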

\vskip 2mm
\par The following result was proved in Edrei \cite{Edrei1965} and Yang \cite{Yang1979}:
\vskip 2mm
\par
\begin{lemma}\rm{}(\cite[p.85]{Edrei1965}, \cite[Lemma 1]{Yang1979} and \cite[p.5, Theorem 1.1.3]{Zheng2009})\label{Lemma2.14}
 Let $f$ be a transcendental and meromorphic meromorphic function in the complex plane with the lower order $0\leq \mu<\infty$ and
 the order $0<\rho\leq \infty.$ Then, for arbitrary positive number $\sigma$ satisfying $\mu\leq \sigma\leq \rho$ and a set $F\subset (0, +\infty)$ with finite logarithmic measure, there exist a sequence of positive numbers $\{r_n\}$ such that
\vskip 2mm
\par (1) \rm{} $r_n\not\in F$ and $\lim\limits_{n\rightarrow\infty}\frac{r}{n}=\infty;$ \quad (2) \rm{} $\liminf\limits_{n\rightarrow\infty} \frac{\log T(r,f)}{\log r}\geq\sigma;$
\vskip 2mm
\par (3) \rm{}  $T(t,f) <\left(1+ o(l)\right)\left(\frac{t}{r_n}\right)^{\sigma}T(r_n,,f)$ for $t \in \left[\frac{r_n}{n},nr_n\right]$ with $n\in\Bbb{Z}^{+}.$
\end{lemma}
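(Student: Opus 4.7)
The plan is to construct $\{r_n\}$ as a sequence of P\'olya peaks of exponent $\sigma$ for $T(r,f)$ and then perturb slightly to miss the exceptional set $F$. Setting $\varphi(r):=T(r,f)r^{-\sigma}$, the goal reduces to selecting $r_n\not\in F$ at which $\varphi$ attains a near-maximum on $I_n:=[r_n/n,\,nr_n]$ with the additional property $\varphi(r_n)\geq r_n^{-1/n}$. Indeed, since $T(t,f)=t^{\sigma}\varphi(t)$, a bound $\varphi(t)\leq(1+o(1))\varphi(r_n)$ for $t\in I_n$ rearranges into condition~(3), and $\varphi(r_n)\geq r_n^{-1/n}$ gives $\log T(r_n,f)/\log r_n\geq\sigma-1/n\to\sigma$, yielding~(2); the requirement $r_n\geq n^{3}$ then secures~(1).

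First I would build a reservoir $\mathcal{R}_n$ of admissible starting radii: because $\limsup_{r\to\infty}\log T(r,f)/\log r=\rho\geq\sigma$, there exist arbitrarily large $R$ with $T(R,f)\geq R^{\sigma-1/n}$, so $\mathcal{R}_n\cap[n^{3},\infty)$ is unbounded. Starting from any $R\in\mathcal{R}_n$, I would run the standard P\'olya--Edrei iteration on $\varphi$: while $R$ fails to be a local peak on its own window $[R/n,nR]$, replace $R$ by the point of that window where $\varphi$ is largest. The iterates form a sequence along which $\varphi$ strictly increases; because $\liminf_{r\to\infty}\log T(r,f)/\log r=\mu\leq\sigma$ keeps $\varphi$ bounded along cofinal sequences, the iteration must terminate at some $r_n$ satisfying
\[
\varphi(t)\leq(1+1/n)\varphi(r_n)\qquad\text{for all }t\in I_n,
\]
which is exactly~(3) with explicit $o(1)=1/n$. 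The initial inequality $\varphi(R)\geq R^{-1/n}$ propagates to $\varphi(r_n)\geq r_n^{-1/n}$, furnishing~(2).

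To enforce $r_n\not\in F$, I would use that $I_n$ has logarithmic measure $2\log n\to\infty$ whereas $F$ has finite logarithmic measure; hence for $n$ large there exists $r_n'\in[r_n,\,r_n(1+1/n)]\setminus F$. Continuity and monotonicity of $T(r,f)$ yield $\varphi(r_n')=(1+o(1))\varphi(r_n)$, so (2) and (3) survive the substitution $r_n\mapsto r_n'$, while $r_n'\geq r_n\geq n^{3}$ gives~(1).

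The principal obstacle is the termination of the P\'olya--Edrei iteration in the boundary cases $\sigma=\rho$ and $\sigma=\mu$, especially when $\rho=\infty$. When $\sigma=\rho$ the reservoir $\mathcal{R}_n$ is cofinal only after inserting the $1/n$ slack into the exponent; when $\sigma=\mu$ there can be subsequences along which $\varphi$ spikes, so each argmax in the iteration must be chosen carefully to avoid such spikes. Both issues are handled by the diagonal choice at stage $n$: the exponent slack $1/n$, the window ratio $n$, and the maximization slack $1+1/n$ all sharpen at a controllable rate, so the resulting errors enter multiplicatively as $1+o(1)$ in~(3) and additively at rate $1/n$ in~(2), and taking $n\to\infty$ produces the asserted sequence.
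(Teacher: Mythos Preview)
The paper does not prove this lemma; it is quoted with attribution to Edrei, Yang, and Zheng. Your outline is the classical P\'olya--peak construction, and the reduction to near-maxima of $\varphi(r)=T(r,f)r^{-\sigma}$ on expanding windows is the right framework.

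The termination of the iteration, however, is where the substance lies, and your justification does not close. The assertion that $\mu\leq\sigma$ ``keeps $\varphi$ bounded along cofinal sequences'' is inaccurate when $\mu=\sigma$ (one gets only $\varphi(R_j)\leq R_j^{\varepsilon}$), and even when $\mu<\sigma$ so that $\varphi(R_j)\to 0$ along some $R_j\to\infty$, your iterates $r_k$ need not meet that sequence. Non-termination gives $\varphi(r_k)\geq(1+1/n)^k\varphi(r_0)$ with $r_k\in[r_0n^{-k},r_0n^{k}]$; along a subsequence $r_{k_j}\to\infty$ this yields only $\limsup_r\log T(r)/\log r\geq\sigma+\log(1+1/n)/\log n$, which is no contradiction when $\sigma<\rho$, and the iterates can equally drift toward small $r$ where $\varphi$ is uncontrolled. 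The proofs in the cited references do not let the iteration run free: they first fix, via $\mu\leq\sigma$, a large barrier $R$ with $T(R)\leq R^{\sigma+1/n}$, choose the seed $r_0$ from the reservoir well below $R$, and argue that the maximum of $\varphi$ on a suitable interval inside $[r_0,R]$ furnishes the peak --- the hypotheses $\mu\leq\sigma$ and $\rho\geq\sigma$ act jointly, not sequentially as in your sketch. Your diagonal-slack remark does not supply this barrier argument. A minor second slip: the interval $[r_n,r_n(1+1/n)]$ has logarithmic measure $\log(1+1/n)\to 0$, so the finite-log-measure hypothesis on $F$ does not by itself produce a point of that interval outside $F$; one should use a fixed-ratio window such as $[r_n,2r_n]$ together with the tail bound $\int_{r_n}^{\infty}\chi_F(t)\,t^{-1}dt\to 0$, and correspondingly establish the peak on a window of ratio $2n$ (or $n^{2}$) so that the perturbed window still fits.
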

\vskip 2mm
\par A sequence $\{r_n\}$ satisfying (I), (2) and (3) in Lemma \ref{Lemma2.14} is called P\'{o}1ya peaks of order $\sigma$
outside $F\subset (0, +\infty)$ with finite logarithmic measure.  For $r>0$ and $a\in\Bbb{C},$ we define
\begin{equation}\nonumber
D(r,a)=:\left\{\theta\in [-\pi,\pi): \log^{+}\frac{1}{|f(re^{i\theta})-a|}>\frac{T(r,f)}{\log r}\right\}
\end{equation}
and
\begin{equation}\nonumber 
D(r, \infty)=:\left\{\theta\in [-\pi,\pi): \log^{+}|f(re^{i\theta})|>\frac{T(r,f)}{\log r}\right\}.
\end{equation}
\vskip 2mm
\par The following result is a special version of the main result of Baernstein \cite{BaernsteinII1973}:
\vskip 2mm
\par
\begin{lemma} \rm{}(\cite[p.419]{BaernsteinII1973})\label{Lemma2.15}
 Let $f$ be a transcendental and meromorphic function in the complex plane with the finite lower order $\mu$
and the order $0<\rho<\infty,$ and for some point $a\in\Bbb{C}\cup \{\infty\},$ we have $\delta=:\delta(a,f)> 0.$ Then, for arbitrary P\'{o}1ya
peaks $\{r_n\}\subset (0, +\infty)\setminus F$ of order $\sigma > 0$ satisfying $\mu\leq\sigma\leq \rho,$ we have
\begin{equation}\nonumber
\liminf\limits_{r_n\rightarrow\infty}\text{mes}D(r_n, a)\geq \min\left\{2\pi,\frac{4}{\sigma}\arcsin\sqrt{\frac{\delta}{2}}\right\}.
\end{equation}
Here and in what follows, $F\subset (0, +\infty)$ is a set of finite logarithmic measure.
\end{lemma}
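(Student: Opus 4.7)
The plan is to reduce to the case $a=\infty$ via M\"{o}bius inversion and then invoke Baernstein's star-function machinery exactly as in \cite{BaernsteinII1973}. If $a\in\Bbb{C},$ I would replace $f$ by $1/(f-a);$ the deficiency $\delta(a,f)$ equals $\delta(\infty,1/(f-a)),$ the order and lower order are preserved, P\'{o}lya peaks of order $\sigma$ transfer, and the sets $D(r,a;f)$ agree with $D(r,\infty;1/(f-a))$ up to sets of vanishing Lebesgue measure. Henceforth assume $a=\infty,$ so the goal becomes the explicit lower bound
\[
\liminf_{r_n\to\infty}\text{mes}\bigl\{\theta\in[-\pi,\pi):\log^{+}|f(r_ne^{i\theta})|>T(r_n,f)/\log r_n\bigr\}\geq\min\Bigl\{2\pi,\tfrac{4}{\sigma}\arcsin\sqrt{\delta/2}\Bigr\}.
\]

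The central tool is the Baernstein star function
\[
T^{*}(re^{i\theta},f)=\sup_{|E|=2\theta}\int_{E}\log^{+}|f(re^{i\phi})|\,d\phi+2\pi N(r,f),\qquad \theta\in(0,\pi],
\]
the supremum being taken over measurable $E\subset[-\pi,\pi]$ of Lebesgue measure $2\theta.$ Two facts drive the argument: (a) $T^{*}$ extends to a subharmonic function on the upper half-plane with boundary values $T^{*}(r,f)=2\pi N(r,f)$ and $T^{*}(-r,f)=2\pi T(r,f);$ (b) the deficiency hypothesis yields $T^{*}(-r,f)-T^{*}(r,f)\geq 2\pi(\delta-o(1))T(r,f),$ producing a prescribed gap between the boundary values along the two rays of the real axis.

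Next I would fix the P\'{o}lya peaks $\{r_n\}$ of order $\sigma$ and use item (3) of Lemma \ref{Lemma2.14} to obtain a polynomial-type growth estimate $T^{*}(re^{i\theta},f)\leq (1+o(1))(r/r_n)^{\sigma}T^{*}(r_ne^{i\theta},f)$ throughout the sector $\{r_n/n<|z|<nr_n,\,0<\arg z<\pi\}.$ A Phragm\'{e}n--Lindel\"{o}f comparison with the positive harmonic function $(r/r_n)^{\sigma}\sin\sigma\theta$ in this sector, combined with the boundary gap (b), produces for each admissible $\theta_{0}\in(0,\pi)$ a trigonometric inequality of the form
\[
T^{*}(r_ne^{i(\pi-\theta_0)},f)\geq 2\pi N(r_n,f)+\bigl(1-\cos(\sigma\theta_0)\bigr)\pi\bigl(\delta-o(1)\bigr)T(r_n,f).
\]
Rewriting the left side through the rearrangement interpretation of the supremum and optimizing in $\theta_0,$ one converts this into a lower bound on the measure of the set where $\log^{+}|f(r_ne^{i\theta})|$ exceeds the threshold $T(r_n,f)/\log r_n;$ the value $(4/\sigma)\arcsin\sqrt{\delta/2}$ is the solution of that extremal problem.

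The main obstacle will be establishing subharmonicity of $T^{*},$ which in \cite{BaernsteinII1973} is obtained through a convexity argument for the non-decreasing rearrangement of $\log^{+}|f(re^{i\theta})|$ as a function of the pair $(\log r,\theta),$ coupled with an adroit use of Green's identity; this is genuinely deep and I would simply quote it. A secondary technical point is showing that the error terms from the P\'{o}lya peak estimate and from replacing $f$ by $1/(f-a)$ are absorbed into $o(T(r_n,f))$ uniformly outside the finite logarithmic measure exceptional set $F,$ so that passing to $\liminf$ in the final step and the $2\pi$ cap in the minimum are both legitimate.
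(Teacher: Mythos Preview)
The paper does not prove Lemma~\ref{Lemma2.15}; it is quoted directly from Baernstein~\cite{BaernsteinII1973} as an established result and used as a black box in the proof of Theorem~\ref{Theorem1.1}. Your sketch correctly identifies the Baernstein star-function method underlying the original 1973 proof, so there is nothing to compare against here---you have supplied more than the paper itself does.
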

\vskip 2mm
\par Next we introduce the Nevanlinna theory in a simply connected domain on $\Bbb{C}$ (cf.\cite[pp.26-38]{Zheng2009}): let $D\subset\Bbb{C}$ be a simply connected domain surrounded by finitely many piecewise analytic curves. Then for any $a\in D,$ there exists a Green function, denoted as $G_D(z,a),$ for $D$ with singularity at $a\in D$ which is uniquely determined by the following conditions: (1) $G_{D}(z,a)$ is harmonic in $D\backslash\{a\};$ (2) in a neighborhood of $a,$ $G_{D}(z,a)=\log\frac{1}{|z-a|}+\omega_D(z,a)$ for some function $\omega_D(z,a)$ harmonic in $D;$
(3) $G_{D}(z,a)\equiv0$ on the boundary of $D.$
\vskip 2mm
\par Next we denote by $\Gamma=\partial{D}$ the positive boundary of $D$ and $\mathfrak{n}$ the inner normal of $\Gamma$ with respect to $D.$ Since for $z\in D,$ $G_{D}(z,a)>0$ and for $z\in \Gamma,$ $G_{D}(z,a)=0,$ from the definition of directional derivative it follows that the directional derivative of $G_{D}(z,a)$ on $\Gamma$ in the inner normal is non-negative, that is to say, $\frac{\partial G}{\partial {\mathfrak{n}}}\geq 0$ with $ G=G_{D}(z,a).$ From the Green formula, in view of the Green function, we can establish the following formula, which is an extension of the Poisson formula for a disk. For a generalization of the formula, the reader is referred to  \cite[p.4, Theorem 1.1]{Goldberg1970}. Following Zheng\cite[p.28]{Zheng2009}, we define the counting function of $f$ with the center at $a$ for $D$ as follows:
\begin{equation}\label{eq2.4}
N(D,a,f)=\sum\limits_{b_{n}\in D}G_{D}(b_{n},a)+n(0,a,f)\omega_{D}(a,a),
\end{equation}
where $a$ is a point such that $a\in D,$ and $b_{n}$ with $n\in\Bbb{Z}^{+}$ is a pole of $f$ appearing often according to its multiplicities of a pole of $f(z),$ and $n(0,a,f)$ is the multiplicity of a pole of $f(z)$ at $a,$ while $\overline{N}(D,a,f)$ is the sum in \eqref{eq2.4} counting all distinct $b_{n}\in D$ with $n\in\Bbb{Z}^{+}$ and with $n(0,a,f)$ replaced by $1$ when $f(a)=\infty.$  Following Zheng \cite[p.28]{Zheng2009}, we also define
\begin{equation}\label{eq2.5}
m(D,a,f)=\frac{1}{2\pi}\int_{\Gamma}\log^{+}|f(\zeta)|\frac{\partial G_{D}(\zeta,a)}{\partial \mathfrak{n}}ds
\end{equation}
and
\begin{equation}\label{eq2.6}
T(D,a,f)=m(D,a,f)+N(D,a,f),
\end{equation}
 which are called the proximity function and the Nevanlinna characteristic function of $f$ with the center at $a$ for the simply connected domain $D$ respectively. For $p$ meromorphic functions $f_1,$ $f_2,$ $\ldots,$ $f_p$ on a simply connected domain $D$ surrounded by finitely many piecewise analytic curves, where $p$ is a positive integer such that $p\geq 2,$  the basic properties concerning the proximity functions, the Nevanlinna counting functions and the Nevanlinna characteristic functions of meromorphic functions on $\overline {D}=:D\cup \partial{D}$ with the center at $a$ for the simply connected domain $D$ are similar to the corresponding properties concerning the classical proximity functions, the classical Nevanlinna counting functions and the classical Nevanlinna characteristic functions of meromorphic functions on the closed disc $|z|\leq r$ with the center at the origin point $0$ for the closed disk $|z|\leq r,$ this can be found, for example, in Zheng \cite[pp.28-29]{Zheng2009}. We recall the following result that is called the Nevanlinna second fundamental theorem with the center at $a$ for a simply connected domain $D$ (cf.\cite[p.32]{Zheng2009}):
\vskip 2mm
\par
\begin{lemma}\rm{}(cf.\cite[p.32, Theorem 2.1.4]{Zheng2009})\label{Lemma2.16}
Let $f$ be a meromorphic function on $D\cup \partial D,$ where $D$ is a simply connected domain such that $D\subset\Bbb{C},$ and let $a_1,$ $a_2,$ $\ldots,$ $a_q$ be $q$ distinct finite complex numbers, where $q$ is a positive integer such that $q\geq 2,$ Then for $a\in D$ such that $f(a)\not\in\{ 0,\infty\}$ and  $f(a)\neq a_j$ with $1\leq j\leq q $ and $j\in\Bbb{Z},$ we have
\begin{equation}\nonumber 
\begin{aligned}
(q-1)T(D,a,f)&\leq N(D,a,f)+\sum\limits_{j=1}^{q}N\left(D,a,\frac{1}{f-a_{j}}\right)-N_{1}(D,a,f)\\
&\quad+S(D,a,f),
\end{aligned}
\end{equation}
where
\begin{equation}\nonumber
\begin{aligned}
S(D,a,f)&=m\left(D,a,\frac{f'}{f}\right)+\sum\limits_{j=1}^{q}m\left(D,a,\frac{f'}{f-a_{j}}\right) \\
&\quad +q\left(\log^{+}\frac{2q}{\delta}+\log^{+}\frac{\delta}{2q}+\log2\right)+\log q-\log|f'(a)| \\
&\quad+\sum\limits_{j=1}^{q}\left(\log|f(a)-a_{j}|+\varepsilon(a_j,D)\right)
\end{aligned}
\end{equation}
and
\begin{equation}\nonumber
N_{1}(D,a,f)=2N(D,a,f)-N(D,a,f')+N\left(D,a,\frac{1}{f'}\right).
\end{equation}
Here $\delta=\min\limits_{1\leq j< k \leq q}|a_j-a_k|$ and  $\varepsilon(a_j,D)\leq \log^{+}|a_j|+\log 2$ for $1\leq j\leq q $ with $j\in\Bbb{Z}.$
\end{lemma}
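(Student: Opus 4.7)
The plan is to adapt Nevanlinna's classical proof of the second main theorem to the simply connected domain $D$, with the Green function $G_D(z,a)$ playing the role that $\log|R/z|$ plays in the disk case. The foundation is a Jensen-type identity: for any $F$ meromorphic on $\overline{D}$ with $F(a)\neq 0,\infty$,
$$\log|F(a)| = m(D,a,F) - m(D,a,1/F) + N(D,a,1/F) - N(D,a,F),$$
obtained by applying Green's formula to $\log|F|$ in $D$, using $\Delta G_D(\cdot,a) = -2\pi\delta_a$ and the vanishing of $G_D$ on $\Gamma=\partial D$. Applied to $F = f-a_j$ this furnishes the first main theorem in $D$; applied to $F = 1/f'$ it produces the term $N_1(D,a,f)$ that records critical values and multiple $a_j$-points of $f$.

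Next I would invoke Nevanlinna's standard auxiliary function
$$\Phi(z) = \sum_{j=1}^{q}\frac{f'(z)}{f(z)-a_j},$$
which satisfies $m(D,a,\Phi) \leq \sum_{j=1}^{q} m(D,a,f'/(f-a_j)) + \log q$ by the triangle inequality on $\Gamma$. The partition trick, which is the heart of Nevanlinna's proof, carries over verbatim to $\Gamma$: for each $\zeta\in\Gamma$ let $a_{\nu(\zeta)}$ be the value nearest to $f(\zeta)$; the separation $\delta = \min_{j<k}|a_j-a_k|$ forces $|f(\zeta)-a_j|\geq \delta/2$ whenever $j\neq\nu(\zeta)$, while at points where $f(\zeta)$ is close to $a_{\nu(\zeta)}$ the term $f'/(f-a_{\nu(\zeta)})$ dominates $\Phi$. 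Integrating the resulting pointwise estimate against $\partial G_D/\partial \mathfrak{n}$ over $\Gamma$ yields
$$\sum_{j=1}^q m\bigl(D,a,1/(f-a_j)\bigr) \leq m(D,a,\Phi) + m(D,a,1/f') + q\log^{+}(2q/\delta) + \log 2.$$

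To finish, I would combine this proximity estimate with the first main theorem in $D$, the comparison $T(D,a,f-a_j) = T(D,a,f) + O(\log^{+}|a_j|)$, and the Jensen identity applied to $1/f'$ (which supplies $N_1(D,a,f)$); a routine rearrangement then delivers the asserted inequality. The main obstacle, beyond invoking the standard logarithmic-derivative bound for the proximities $m(D,a,f'/(f-a_j))$ that make up part of $S(D,a,f)$, is the meticulous bookkeeping of the boundary constants: the term $-\log|f'(a)|$ enters from evaluating the Jensen identity for $1/f'$ at the reference point $a$; the terms $\log|f(a)-a_j|$ come from the same identity applied to $f-a_j$; and the constants $\varepsilon(a_j,D)\leq \log^{+}|a_j|+\log 2$ arise from estimating $m(D,a,F)$ in terms of integrals of $\log^{+}|F|$ against the Green-function weight $\partial G_D/\partial\mathfrak{n}$. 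Organizing all these contributions into a single error term $S(D,a,f)$ having exactly the stated form is where the accounting becomes delicate.
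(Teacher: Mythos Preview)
The paper does not give its own proof of this lemma: it is simply quoted from Zheng's monograph \cite[p.~32, Theorem~2.1.4]{Zheng2009} as a preliminary result, so there is no in-paper argument to compare against. Your outline is precisely the classical Nevanlinna proof transported to a simply connected domain via the Green function, and this is indeed how the result is established in Zheng's book (and in Goldberg--Ostrovskii before it). One small clarification: you refer to ``invoking the standard logarithmic-derivative bound'' for the terms $m(D,a,f'/(f-a_j))$ as an obstacle, but note that in the statement as given these proximity terms are left unestimated inside $S(D,a,f)$; no logarithmic-derivative lemma is needed at this stage, and the only genuine work is the pointwise partition estimate on $\Gamma$ together with the Jensen identities you describe.
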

\vskip 2mm
\par We recall the following result that depicts the relationship between two Green's functions for two domains of which is mapped to another one by a univalent analytic function:
 \begin{lemma}\rm{}(cf.\cite[p.276, 5.3 Theorem]{Conway1978})\label{Lemma2.17} Let $D_1$ and $D_2$ be regions on the complex plane such that there is a one-to-one analytic function $\phi$ of $D_1$ onto $D_2,$ and let $a\in D_1$ and $\alpha=\phi(a).$ If $G_a$ and $G_{\alpha}$ are the Green's functions for $D_1$ and $D_2$  with singularities $a$ and $\alpha$ respectively, then $G_a(z)=G_{\alpha}(\phi(z))$ for each $z\in D_1.$
\end{lemma}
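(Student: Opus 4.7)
The plan is to prove conformal invariance of the Green's function by defining $H(z)=G_{\alpha}(\phi(z))$ for $z\in D_1$ and verifying that $H$ satisfies the three defining properties of the Green's function of $D_1$ with singularity at $a$; then uniqueness of the Green's function forces $H=G_a$.

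First I would handle harmonicity: since $\phi$ is a one-to-one analytic map and $G_{\alpha}$ is harmonic on $D_2\setminus\{\alpha\}$, and harmonicity is preserved under composition with a holomorphic map, $H$ is harmonic on $D_1\setminus\{a\}$ (note that $\phi^{-1}(\alpha)=\{a\}$ by injectivity). Second, I would analyze the behavior of $H$ near the singularity $a$. Because $\phi$ is univalent, $\phi'(a)\neq 0$, so one can write $\phi(z)-\phi(a)=(z-a)\psi(z)$ with $\psi$ analytic and nonvanishing in a neighborhood of $a$ and $\psi(a)=\phi'(a)$. Substituting the local expansion
\begin{equation}\nonumber
G_{\alpha}(w)=\log\frac{1}{|w-\alpha|}+\omega_{D_2}(w,\alpha)
\end{equation}
and using $|\phi(z)-\alpha|=|z-a|\,|\psi(z)|$, I obtain
\begin{equation}\nonumber
H(z)=\log\frac{1}{|z-a|}+\bigl(-\log|\psi(z)|+\omega_{D_2}(\phi(z),\alpha)\bigr),
\end{equation}
and the parenthesized term is harmonic in a neighborhood of $a$ since $\psi$ is analytic and nonzero there and $\omega_{D_2}(\cdot,\alpha)\circ\phi$ is harmonic. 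Hence $H$ has the correct logarithmic singularity at $a$.

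The main obstacle, and the step that requires the most care, is the boundary condition: I need $H(z)\to 0$ as $z\to\partial D_1$. The key fact is that a conformal bijection $\phi:D_1\to D_2$ sends approach to $\partial D_1$ to approach to $\partial D_2$; that is, for every sequence $z_n\in D_1$ with $z_n\to\partial D_1$, every accumulation point of $\phi(z_n)$ lies in $\partial D_2$ (otherwise an interior accumulation point would, together with the open mapping theorem and $\phi^{-1}$ being continuous on $D_2$, force $z_n$ to accumulate in $D_1$). Since $G_{\alpha}(w)\to 0$ as $w\to\partial D_2$, we conclude $H(z)=G_{\alpha}(\phi(z))\to 0$ as $z\to\partial D_1$.

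Finally, having verified that $H$ is harmonic on $D_1\setminus\{a\}$, has the singularity $\log\frac{1}{|z-a|}$ plus a harmonic remainder at $a$, and vanishes on $\partial D_1$, I invoke the uniqueness of the Green's function (which follows from the maximum principle applied to the bounded harmonic function $G_a-H$ on $D_1\setminus\{a\}$, whose singularities at $a$ cancel) to conclude $H(z)=G_a(z)$ for every $z\in D_1$, which is the stated identity $G_a(z)=G_{\alpha}(\phi(z))$.
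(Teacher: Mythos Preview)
Your proof is correct and follows the standard argument for conformal invariance of the Green's function: pull back $G_{\alpha}$ by $\phi$, verify the three defining properties (harmonicity off the singularity, the correct $\log\frac{1}{|z-a|}$ behavior at $a$ via $\phi'(a)\neq 0$, and vanishing boundary values via the fact that a conformal bijection carries boundary approach to boundary approach), and invoke uniqueness. The paper does not supply its own proof of this lemma but simply cites Conway \cite[p.276, 5.3 Theorem]{Conway1978}; your argument is essentially the one found there.
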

\vskip 2mm
\par
\begin{remark}\rm{}\label{Remark2.5X2} Following Zheng \cite[pp.35-37]{Zheng2009} and Goldberg-Ostrovskii\cite[p.2]{Goldberg1970}, we have a discussion as follows: let $f$ be a non-constant meromorphic function on $\overline{D}=:D\cup\partial {D},$ where $D\subset\Bbb{C}$ is a simple connected domain that is surrounded by finitely many piecewise analytic curves, in other words, the boundary $\Gamma =\partial{D}$ of the simple connected domain $D$ consists of finitely many piecewise analytic curves. Then, for a point $a\in D$ such that $f(a)\neq 0, \infty,$ by the Riemann mapping theorem (cf.\cite[p.230,Theorem 1]{Ahlfors1979})
there exists a conformal mapping $\phi_a(z)$ of $D$ onto the unit disc $\{w\in\Bbb{C}:|w|<1\}$ in such a way that $\phi_a(a)=0.$ By the theorem on the boundary behavior of conformal mappings, the function $\phi_a(z)$ is continuous on $\overline{D}=D\cup\partial{D},$ and $|\phi_a(\zeta)|=1$ for $\zeta\in \Gamma.$ It is easy for us to see that $G_D(z,a)=-\log |\phi_a(z)|$ with $z\in \overline{D}=D\cup\partial{D}$ satisfies (1) and (2).  Along the positive boundary $\Gamma=\partial{D}$ of $D,$ we have
\begin{equation}\label{eq2.7}
\frac{\phi'_a(\zeta)}{\phi_a(\zeta)}d\zeta=i\frac{\partial G_D(\zeta,a)}{\partial \mathfrak{n}}ds \, \, \text{for each} \, \, \zeta\in \Gamma.
\end{equation}
when $\phi_a(z)$ is analytic in the boundary $\partial{D}$ of $D.$ Indeed, from the Cauchy-Riemann condition that the tangent vector $\mathfrak{s}$ at any point along the positive boundary $\Gamma=\partial{D}$ of $D$ becomes the inner normal $\mathfrak{n}$ of $\Gamma$ at the corresponding point on $\Gamma$ after $\mathfrak{s}$ is rotated $\pi/2$ anticlockwise. Therefore, for each $\zeta\in \Gamma$ we have
\begin{equation}\label{eq2.8}
\frac{\partial u}{\partial{\mathfrak{s}} }=\frac{\partial v}{\partial{\mathfrak{n}} } \, \, \text{and} \, \, \frac{\partial u}{\partial{\mathfrak{n}} }=-\frac{\partial v}{\partial{\mathfrak{s}} }
\end{equation}
when $u(z)+iv(z)=:H(z)$ with $u(z)=\text{Re}(H(z))$ and $v(z)=\text{Im}(H(z))$ is an analytic function on $\overline{D}=D+\partial D.$ Therefore, it follows from \eqref{eq2.8} and $\log \phi_a(z)=\log |\phi_a(z)|+i\arg \phi_a(z)$ with $z\in D\setminus\{a\}$ that
\begin{equation}\label{eq2.9}
\begin{split}
d\log \phi_a(\zeta)&=\frac{\partial{\log \phi_a(\zeta)}}{\partial{\mathfrak{s}}}ds=i\frac{\partial{\arg \phi_a(\zeta)}}{\partial{\mathfrak{s}}}ds=-i\frac{\partial\log|\phi_a(\zeta)|}{\partial{\mathfrak{n}}}ds\\
&=i\frac{\partial{G_D(\zeta,a)}}{\partial{\mathfrak{n}}}ds
\end{split}
\end{equation}
for $\zeta\in \Gamma$ when the function $\phi_a(z)$ mentioned above is analytic in the boundary $\partial{D}$ of $D.$ Here and in what follows, $d\mathfrak{s}$
denotes the tangent vector along the positive boundary $\Gamma$ of $D$ at any given point $\zeta\in \Gamma,$ and $\mathfrak{n}$ denotes the corresponding inner normal of the positive boundary $\Gamma$ of $D$ at the corresponding point $\zeta\in \Gamma.$ From \eqref{eq2.9} we get \eqref{eq2.7}. In this way, we can obtain the Green functions for some special simple connected domains on $\Bbb{C}$ as follows:
\vskip 2mm
\par ({\bf{\ref{Remark2.5X2}.1}}) For $D=\{z\in\Bbb{C}:|z|<R, \, \, \text{Im}(z)>0\}$ and a point $a\in D,$ where $R$ is some positive number, we follow Zheng \cite[p.36]{Zheng2009} and Goldberg-Ostrovskii\cite[p.2, Example 2.]{Goldberg1970} to have
\begin{equation}\label{eq2.10}
\phi_a(z)=\frac{R(z-a)}{R^2-\overline{a}z}\cdot\frac{R^2-az}{R(z-\overline{a})}
\end{equation}
and
\begin{equation}\label{eq2.11}
 G_D(z,a)=-\log |\phi_a(z)|=\log \left| \frac{R^2-\overline{a}z}{R(z-a)}\cdot\frac{R(z-\overline{a})}{R^2-az}\right|
\end{equation}
for $z\in D\cup\partial D.$ Moreover, from \eqref{eq2.7}, \eqref{eq2.10}, \eqref{eq2.11} and Goldberg-Ostrovskii\cite[p.306]{Goldberg1970} we have for each point $\zeta=Re^{i\theta}$ with $0\leq \theta\leq \pi$ on the semi-circle $\{\zeta\in\Bbb{C}: \zeta=Re^{i\theta} \, \, \text{with} \, \, 0<\theta<\pi\}$ and $a=|a|e^{\varphi}$ with $0<|a|<R$ and $0<\varphi< \pi$ that
\begin{equation}\label{eq2.12}
\begin{aligned}
&\quad \frac{G_D(\zeta,a)}{\partial{\mathfrak{n}}}ds=-i\left(\log \phi_a(\zeta)\right)'ds=-i\left(\log\frac{R^2-\overline{a}\zeta}{R(\zeta-a)}
 \cdot\frac{R(\zeta-\overline{a})}{R^2-a\zeta} \right)'ds\\
 &=\left(\frac{R^2-|a|^2}{|\zeta-a|^2}-\frac{R^2-|a|^2}{|\zeta-\overline{a}|^2}\right)d\theta\\
  \end{aligned}
\end{equation}
\begin{equation*}
\begin{aligned}
 &=\frac{4(R^2-|a|^2)R|a|\sin\varphi \sin\theta}{(R^2+|a|^2-2R|a|\cos(\varphi-\theta))((R^2+|a|^2-2R|a|\cos(\varphi+\theta))}d\theta,
 \end{aligned}
 \end{equation*}
while for each point $\zeta=t$ in the interval $\{\zeta\in\Bbb{R}: -R<t<R\}$ and $a=|a|e^{\varphi}$ with $0<|a|<R$ and $0<\varphi< \pi,$ we have
\begin{equation}\nonumber
\frac{G_D(\zeta,a)}{\partial{\mathfrak{n}}}ds=2\left(\frac{|a|\sin\varphi}{|a-t|^2}-\frac{R^2|a|\sin\varphi}{|R^2-at|^2}\right).
  \end{equation}
\vskip 2mm
\par  ({\bf{\ref{Remark2.5X2}.2}}) For $D=\{z\in\Bbb{C}:|z|<R\}$ and the point $a\in D,$ where $R$ is any given positive number, we follow Zheng \cite[p.36]{Zheng2009} to write the proximity function $m(D,a,f),$ the Nevanlinna counting function $N(D,a,f),$ the Nevanlinna characteristic function $T(D,a,f)$ of $f$ with the center at $a$ for $D=\{z\in\Bbb{C}:|z|<R\}$ into $m(R,a,f),$  $N(R,a,f)$ and $T(R,a,f)$ respectively. In particular, the proximity function $m(R,0,f),$ the Nevanlinna counting function $N(R,0,f),$ the Nevanlinna characteristic function $T(R,0,f)$ of $f$ with the center at $0$ for $D=\{z\in\Bbb{C}:|z|<R\}$ are reduced into $m(R, f),$ $N(R,f)$ and $T(R,f)$ respectively. Indeed, we substitute \eqref{eq2.10} into \eqref{eq2.7}, and then we have
\begin{equation}\label{eq2.13}
\frac{\partial{G_D(\zeta,z)}}{\partial{\mathfrak{n}}}ds=\frac{R^2-|z|^2}{|\zeta-z|^2}d\theta \, \, \text {and} \, \, \omega_D(z,a)=\log \left|\frac{R^2-\overline{a}z}{R}\right|
\end{equation}
for any complex number $\zeta=Re^{i\theta}$ with $\theta\in [0,2\pi)$ and any given complex number $z$ such that $|z|<R$ and a point $a$ such that $|a|<R.$ In particular, we have from \eqref{eq2.13} that
\begin{equation}\label{eq2.14}
\frac{\partial{G_D(\zeta,0)}}{\partial{\mathfrak{n}}}ds=d\theta \, \, \text {and} \, \, \omega_D(z,0)=\log R
\end{equation}
for any complex number $\zeta=Re^{i\theta}$ with $\theta\in [0,2\pi).$ Therefore, it follows from \eqref{eq2.4}-\eqref{eq2.6} and \eqref{eq2.14} that the proximity function $m(D,0,f),$ the Nevanlinna counting function $N(D,0,f),$ the Nevanlinna characteristic function $T(D,0,f)$ of $f$ with the center at $0$ for $D=\{z\in\Bbb{C}:|z|<R\}$ are reduced into
\begin{equation}\label{eq2.15}
m(R,f)=\frac{1}{2\pi}\int_0^{2\pi}\log^{+}|f(Re^{i\theta})|d\theta,
\end{equation}
\begin{equation}\label{eq2.16}
N(R, f)=\sum\limits_{0<|b_{n}|<R}\log \frac{R}{|b_n|}+n(0,f)\log R
\end{equation}
and
\begin{equation}\label{eq2.17}
T(R,f)=m(R,f)+N(R,f)
\end{equation}
respectively.
\vskip 2mm
\par  ({\bf{\ref{Remark2.5X2}.3}})Based upon the assumptions of Lemma \ref{Lemma2.16}, we follow Zheng \cite[p.35]{Zheng2009} to see that $N(D,a,f)$ and $T(D,a,f)$ are non-negative for the case of $f(a)\neq \infty,$ while for the case of  $f(a)=\infty,$ $T(D,a,f)$ may be negative. For example, we consider the function $f(z)=\frac{1}{2z^p}$ with $D=\left\{z\in\Bbb{C}:|z|<\frac{1}{2}\right\},$ where $p$ is a positive integer. In view of \eqref{eq2.4}, \eqref{eq2.15}-\eqref{eq2.17} and Theorem 2.16 in Zheng \cite[p.34]{Zheng2009} we deduce $T(D,0,f)=T\left(\frac{1}{2},f\right)=-\log 2$ and  $N\left(D,0,\frac{1}{f-e^{i\theta}}\right)=N\left(\frac{1}{2},\frac{1}{f-e^{i\theta}}\right)=0$ with $\theta\in[0,2\pi]$ for $D=\{z\in\Bbb{C}:|z|<\frac{1}{2}\}.$ It is obvious that $N(D,a,f)\geq 0$ and so $T(D,a,f)\geq 0,$ when $\omega_D(a,a)\geq 0$ for $a\in D=\{z\in\Bbb{C}:|z|<\frac{1}{2}\}.$
\end{remark}
\vskip 2mm
\par For convenience in stating the following result, we shall use the following notation. We
shall let $(f,H)$ denote a pair that consists of a transcendental meromorphic function $f$ and a finite set $H=\{(k_1,j_1), (k_2,j_2),\ldots,(k_q,j_q)\}$ of distinct pairs of integers that satisfy $k_l>j_l$ for $l=1,2\ldots,q.$ We recall the following result due to Gundersen \cite{Gundersen1988}:
\vskip 2mm
\par
\begin{lemma}\rm{}(\cite[p.90, Corollary 2]{Gundersen1988})\label{Lemma2.18}
Let $(f,H)$ be a given pair where $f$ has finite order $\rho,$ and let $\varepsilon$ be a given constant. Then, there exists a set $F\subset (1, +\infty)$ that has finite logarithmic measure, such that if $|z|\not\in F\cup [0,1]$ and for all $(k,j)\in H,$ we have
\begin{equation}\nonumber
\left|\frac{f^{(k)}(z)}{f^{(j)}(z)}\right|\leq |z|^{(k-j)(\rho-1+\varepsilon)}.
\end{equation}
\end{lemma}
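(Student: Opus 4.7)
The plan is to derive the pointwise bound from the Poisson--Jensen formula applied to $f^{(j)}$ on a disc of radius $2|z|$, using the fact that for a meromorphic function of finite order $\rho$ the characteristic satisfies $T(r,\cdot)=O(r^{\rho+\eta})$ for every $\eta>0$, and then to carve out an exceptional set of radii of finite logarithmic measure on which $|z|$ is too close to a zero or pole of $f^{(j)}$. First I would reduce to the case $j=0$: since $f^{(j)}$ is meromorphic of order $\rho(f)$, writing $g=f^{(j)}$ and $n=k-j$ turns the claim into the assertion that for every meromorphic $g$ of order $\le\rho$ and every integer $n\ge 1$ there is a set $F_{n,g}\subset(1,\infty)$ of finite logarithmic measure with $|g^{(n)}(z)/g(z)|\le|z|^{n(\rho-1+\varepsilon)}$ whenever $|z|\notin F_{n,g}\cup[0,1]$. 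Because $H$ is finite, the union of the exceptional sets corresponding to each pair $(k,j)\in H$ is itself of finite logarithmic measure and produces the set $F$ demanded by the statement.

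Next I would set up the Poisson--Jensen representation. For $|z|=r$ and $R=2r$, let $\{a_\mu\}$ and $\{b_\nu\}$ be the zeros and poles of $g$ in $|\zeta|\le R$, counted with multiplicity. The Poisson--Jensen formula expresses $\log|g(z)|$ as a Poisson integral of $\log|g(Re^{i\varphi})|$ over the boundary plus logarithmic Blaschke-type contributions from the $a_\mu$ and $b_\nu$. Complex-differentiating the holomorphic branch of $\log g$ coming from this representation $n$ times and taking absolute values yields an inequality of the shape
\begin{equation*}
\Bigl|\frac{g^{(n)}(z)}{g(z)}\Bigr|\le C_n\Bigl(\frac{R\,T(R,g)}{(R-r)^{n+1}}+\sum_{\mu}\frac{R}{|z-a_\mu|^{n}}+\sum_{\nu}\frac{R}{|z-b_\nu|^{n}}\Bigr),
\end{equation*}
with an absolute constant $C_n$, plus smaller lower-order terms. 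With $R=2r$ and the bound $T(R,g)\le R^{\rho+\varepsilon/4}$ for all sufficiently large $r$, the boundary term is at most $O(r^{\rho-n+\varepsilon/4})$, which is comfortably below the target $r^{n(\rho-1+\varepsilon)}$.

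The heart of the argument is the treatment of the sums over $a_\mu$ and $b_\nu$. I would pick $\alpha$ with $\rho-1<\alpha<\rho-1+\varepsilon/(2n)$, enumerate the zeros and poles of $g$ as $\{c_k\}$, and set
\begin{equation*}
F_{n,g}=\bigcup_{k}\bigl\{r\in(1,\infty):\bigl||c_k|-r\bigr|<|c_k|^{-\alpha}\bigr\}.
\end{equation*}
The logarithmic measure of $F_{n,g}$ is bounded, up to a constant, by $\sum_{k}|c_k|^{-\alpha-1}$, which converges because $\alpha+1>\rho$ and because the counting-function bound $n(r,g)+n(r,1/g)=O(r^{\rho+\varepsilon/8})$ entails $\sum_k|c_k|^{-s}<\infty$ for every $s>\rho$. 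For $|z|=r\notin F_{n,g}\cup[0,1]$, every $c_k$ in $|\zeta|\le 2r$ satisfies $|z-c_k|\ge|c_k|^{-\alpha}\ge(2r)^{-\alpha}$, so the two sums are dominated by $2r\cdot(2r)^{n\alpha}\cdot(n(R,g)+n(R,1/g))=O(r^{1+n\alpha+\rho+\varepsilon/8})$. By the choice of $\alpha$ this is at most $r^{n(\rho-1+\varepsilon)}$ for $r$ sufficiently large, completing the single-ratio bound.

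The main obstacle is the calibration of $\alpha$: it must exceed $\rho-1$ to keep $F_{n,g}$ of finite logarithmic measure, and simultaneously stay below $\rho-1+\varepsilon/(2n)$ so that the resulting pointwise estimate matches the claimed exponent $(k-j)(\rho-1+\varepsilon)$. Everything else is bookkeeping: absorbing the reduction from $(k,j)$ to $(k-j,0)$ via $g=f^{(j)}$, taking the finite union over $(k,j)\in H$, and invoking the standard consequences of finite order ($T(r,\cdot)=O(r^{\rho+\eta})$ and the corresponding counting-function bound) that make both the boundary contribution and the zero/pole contribution affordable.
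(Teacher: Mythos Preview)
The paper does not prove this lemma; it simply quotes it as Corollary~2 of Gundersen's 1988 paper, so there is no argument here to compare your sketch against.

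That said, your sketch contains a genuine gap. The claim that the zero/pole contribution
\[
2r\cdot(2r)^{n\alpha}\cdot\bigl(n(R,g)+n(R,1/g)\bigr)=O\bigl(r^{\,1+n\alpha+\rho+\varepsilon/8}\bigr)
\]
is at most $r^{\,n(\rho-1+\varepsilon)}$ ``by the choice of $\alpha$'' is false. Already for $n=1$ you would need $1+\alpha+\rho+\varepsilon/8\le\rho-1+\varepsilon$, i.e.\ $\alpha\le-2+7\varepsilon/8$, while your constraint $\alpha>\rho-1\ge-1$ forces $\alpha>-1$. For general $n$, combining $n\alpha< n\rho-n+\varepsilon/2$ with the required inequality $1+n\alpha+\rho+\varepsilon/8\le n\rho-n+n\varepsilon$ yields $\rho+1\le(n-5/8)\varepsilon$, which fails for every $\rho\ge0$ once $\varepsilon$ is small.

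The underlying problem is that bounding $\sum_{|c_k|\le 2r}|z-c_k|^{-n}$ by (number of terms)$\times$(largest term) is far too crude: you are replacing every $|z-c_k|$ by the uniform lower bound $(2r)^{-\alpha}$, whereas in fact all but a slowly growing number of the $c_k$ lie at distance comparable to $r$ from $z$. Gundersen's actual proof treats this sum much more carefully, essentially grouping the $c_k$ according to the size of $\bigl||c_k|-r\bigr|$ and summing the contributions; the net effect is a bound containing a factor $r^{-1}$ (times powers of $\log r$ and the counting function) rather than $(2r)^{\alpha}$, and that factor of $r^{-1}$ is exactly what produces the exponent $\rho-1+\varepsilon$. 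Your annular exceptional set is the right kind of object, but it has to be paired with a summation argument that exploits the full range of distances $\bigl||c_k|-r\bigr|$, not just the worst case. A secondary point: differentiating $\log g$ a total of $n$ times yields $(\log g)^{(n)}$, not $g^{(n)}/g$; the passage to $g^{(n)}/g$ requires either Fa\`a di Bruno or the telescoping product $g^{(n)}/g=\prod_{m=1}^{n}g^{(m)}/g^{(m-1)}$, and in Gundersen's paper it is the latter route (reducing to the case $n=1$ and iterating) that is taken.
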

\vskip 2mm
\par Let $f$ be a transcendental and meromorphic function in the complex plane. An important
 role in complex dynamics \cite{Bergweiler1993} is played by the singular values of the inverse
function $f^{-1}:$ these are the critical values of $f$ and the asymptotic values, that
is, values $a\in\Bbb{C}$ such that $f(z)$ tends to $a$ as $z$ tends to infinity along a path $\gamma_a.$ We denote by $B$ the class of all transcendental and meromorphic functions $f$ in the complex plane for which the set of finite singular values of $f^{-1}$ is bounded, and by $S$ we denote the
subclass of $B$ consisting of those $f$ for which $f^{-1}$ has finitely many singular values.
Both classes $S$ and $B$ have been studied extensively in iteration theory and value distribution theory
 (cf.\cite{Bergweiler1993, Bergweiler1995, Eremenko1992, Langley1998, Rippon1999}).  We recall the following result from Bergweiler \cite{Bergweiler1993} and  Rippon \cite{Rippon1999} that was proved first in Eremenko \cite{Eremenko1992} for entire functions:
\vskip 2mm
\par
\begin{lemma}\rm{}(\cite{Bergweiler1993, Rippon1999})\label{Lemma2.19}
Let $f$ be a transcendental and meromorphic function in the complex plane plane such that the set of finite singular values of the inverse function $f^{-1}$  is bounded. Then there exist $L > 0$ and $M > 0$ such that if $|z| > L$ and $|f(z)| > M$ then
\begin{equation}\nonumber
\left|z\frac{f'(z)}{f(z)}\right|\geq \frac{\log \left|f(z)/M\right|}{C},
\end{equation}
where  $C$ is a positive absolute constant, in particular independent of $f,$ $L$ and $M.$
\end{lemma}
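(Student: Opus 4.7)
The plan is to prove Lemma~\ref{Lemma2.19} by applying Koebe's quarter theorem to a univalent branch of $f^{-1}$ on a region that contains no singular values of $f^{-1}$. Let $M_0 > 0$ be a bound for the finite singular values of $f^{-1}$, fix a regular point $z_\ast \in \Bbb{C}$ of $f$ (so $f(z_\ast) \in \Bbb{C}$), and choose $M > 2\max\{M_0, |f(z_\ast)|\}$. Take $L > 2|z_\ast|$.

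For $z$ with $|z| > L$ and $|f(z)| > M$, let $T$ be the connected component of $f^{-1}(\{|w| > M\})$ containing $z$. Since $\{|w| > M\}$ meets no singular values of $f^{-1}$, the restriction $f|_T : T \to \{|w| > M\}$ is a covering map. The universal cover of $\{|w|>M\}$ is the half-plane $\Bbb{H}_{\log M} := \{\zeta \in \Bbb{C} : \text{Re}(\zeta) > \log M\}$ via the exponential $\zeta \mapsto e^\zeta$. Assume first that $T$ is simply connected, so $f|_T$ realizes this universal cover. Then there exists a conformal isomorphism $\lambda : T \to \Bbb{H}_{\log M}$ satisfying $e^{\lambda} = f$ on $T$, whence $\lambda'(z) = f'(z)/f(z)$.

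Apply Koebe's quarter theorem to the univalent inverse $\mu := \lambda^{-1} : \Bbb{H}_{\log M} \to T$ on the largest disc in $\Bbb{H}_{\log M}$ centred at $\lambda(z)$, namely $D(\lambda(z), d)$ with $d := \text{Re}(\lambda(z)) - \log M = \log|f(z)/M|$. Using $|\mu'(\lambda(z))| = |f(z)|/|f'(z)|$, this yields
\[
T \;\supset\; \mu\bigl(D(\lambda(z), d)\bigr) \;\supset\; D\!\left(z,\; \frac{d}{4} \cdot \frac{|f(z)|}{|f'(z)|}\right).
\]
Since $T \subset \{|f| > M\}$ and $|f(z_\ast)| < M$, the point $z_\ast$ does not lie in $T$; the containment therefore forces $|z - z_\ast| \geq \tfrac{d}{4}\,|f(z)|/|f'(z)|$. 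Combined with $|z| > 2|z_\ast|$, which gives $|z-z_\ast| \leq 2|z|$, this rearranges to $|z f'(z)/f(z)| \geq \log|f(z)/M|/8$, so $C = 8$ works.

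The main obstacle is the remaining case where $T$ is multiply connected (i.e.\ $f|_T$ is a finite-degree covering, making $T$ topologically an annulus), which can occur for meromorphic $f \in B$ whose tracts surround poles. Here one passes to the universal cover: produce a conformal isomorphism $\tilde\lambda : \tilde T \to \Bbb{H}_{\log M}$ covering $f|_T$, apply the same Koebe estimate upstairs on $\tilde T$, and project the resulting disc down through the covering map $\tilde T \to T$. The technical point is that the projected set in $T$ need not be a Euclidean disc, so one must check that it still cannot contain $z_\ast$; this uses the lower bound $|z| > L$ to ensure that the peripheral geometry of the tract stays uniformly far from $z_\ast$. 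A final bookkeeping check confirms that the absolute constant $C$, which the Koebe estimate forces to be a fixed multiple of $4$, is preserved under this reduction and remains independent of $f$, $L$, and $M$.
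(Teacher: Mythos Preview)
The paper does not prove this lemma; it merely cites it from \cite{Bergweiler1993, Rippon1999} (the entire case being due to Eremenko--Lyubich \cite{Eremenko1992}). There is therefore no in-paper argument to compare against, and your task was to reconstruct a proof from scratch.

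Your treatment of the simply connected tract case is correct and is the standard argument: the logarithm $\lambda$ is a conformal isomorphism $T\to\Bbb H_{\log M}$, Koebe's quarter theorem applied to $\mu=\lambda^{-1}$ on $D(\lambda(z),d)$ yields a disc in $T$ of radius $\tfrac{d}{4}\,|f(z)/f'(z)|$ about $z$, and the omission of $z_\ast$ gives the bound with $C=8$.

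The multiply connected case is where your proof has a genuine gap. If $f|_T$ has finite degree $n$, the map $\psi=\pi\circ\tilde\lambda^{-1}\colon \Bbb H_{\log M}\to T\subset\Bbb C$ is the universal cover of the annulus $T$; it is periodic with period $2\pi i n$ and therefore \emph{not} univalent on $D(\zeta_0,d)$ once $d>\pi n$, so Koebe's quarter theorem does not apply. Your phrase ``apply the same Koebe estimate upstairs on $\tilde T$'' has no content: $\tilde T$ is an abstract Riemann surface, and its only planar incarnation through $\pi$ is precisely the non-univalent $\psi$. Your diagnosis that ``the projected set in $T$ need not be a Euclidean disc'' misidentifies the obstruction---the problem is that Koebe never produces a set to project. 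If instead one realizes $\tilde T$ concretely as a strip via a branch of $\log$ and applies Koebe there, the projection by $\exp$ introduces an extra factor of order $\log|z|$, yielding only $|zf'(z)/f(z)|\gtrsim d/\log|z|$, which is strictly weaker than the lemma. The treatment of the doubly connected case in \cite{Rippon1999} requires a genuinely different ingredient (exploiting that such a tract surrounds poles of $f$), and cannot be recovered from ``pass to the universal cover and project.''
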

\vskip 2mm
\par The following result is due to Bergweiler-Eremenko \cite{Bergweiler1995D2}:
\vskip 2mm
\par
\begin{lemma}\rm{}(\cite[p.90, Corollary 3]{Bergweiler1995D2})\label{Lemma2.20}
If a meromorphic function of finite order $\rho$ has only
finitely many critical values, then it has at most $2\rho$ asymptotic values.
\end{lemma}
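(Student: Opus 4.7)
The plan is to associate to each asymptotic value an unbounded simply connected \emph{tract} on which $f$ has a canonical exponential-type normal form, and then bound the number of pairwise disjoint such tracts by $2\rho$ via a Denjoy--Carleman--Ahlfors length--area argument that exploits the finite-order hypothesis.

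For the first step, let $a_1,\ldots,a_n\in\Bbb{C}\cup\{\infty\}$ be the distinct asymptotic values of $f$ and let $C$ denote the (finite, by hypothesis) set of critical values. Choose $\varepsilon>0$ small enough that the neighborhoods $\Delta_i=\{|w-a_i|<\varepsilon\}$ (replaced by $\{|w|>1/\varepsilon\}$ if $a_i=\infty$) are pairwise disjoint and avoid $C$. Along any path on which $f\to a_i$, the function $f$ is eventually confined to some component $T_i$ of $f^{-1}(\Delta_i\setminus\{a_i\})$. Because $\Delta_i\setminus\{a_i\}$ contains no critical value, the map $f\colon T_i\to\Delta_i\setminus\{a_i\}$ is an unramified covering, so $T_i$ is either a bounded annulus mapped as a finite cyclic cover (which cannot contain an asymptotic path tending to $\infty$) or an unbounded simply connected domain mapped as a universal cover. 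Thus each $a_i$ yields at least one unbounded tract $T_i$ of the latter type, and the tracts over distinct asymptotic values are disjoint because their images are. On $T_i$ there is a single-valued branch $\psi_i(z)=\log\bigl(1/(f(z)-a_i)\bigr)$ (or $\psi_i(z)=\log f(z)$ when $a_i=\infty$) which is a conformal isomorphism of $T_i$ onto the right half-plane $\{\operatorname{Re} w>\log(1/\varepsilon)\}$.

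For the second step, let $\theta_i(r)$ denote the angular measure of $T_i\cap\{|z|=r\}$; disjointness of the tracts in $\Bbb{C}$ gives $\sum_{i=1}^{n}\theta_i(r)\le 2\pi$ for every $r$. The Ahlfors distortion theorem applied to the conformal map $\psi_i\colon T_i\to\{\operatorname{Re} w>\log(1/\varepsilon)\}$ yields, for $r$ sufficiently large,
\[
\max_{z\in T_i,\,|z|=r}\operatorname{Re}\psi_i(z)\;\ge\;\pi\int_{r_0}^{r}\frac{dt}{t\,\theta_i(t)}+O(1).
\]
Since $\operatorname{Re}\psi_i(z)=\log(1/|f(z)-a_i|)$, the left-hand side is controlled in size by the Nevanlinna characteristic $T(r,f)$, which by the finite-order hypothesis satisfies $T(r,f)=O(r^{\rho+o(1)})$. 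By the Cauchy--Schwarz inequality $\sum_{i=1}^{n}1/\theta_i(r)\ge n^2/(2\pi)$; averaging over the tracts and feeding this back into the integral estimate forces $n/2\le\rho$, that is, $n\le 2\rho$.

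The principal obstacle is the careful execution of the Ahlfors length--area estimate when the angular widths $\theta_i(r)$ are irregular and may vanish on large sets of radii; handling this rigorously is the technical heart of the classical Denjoy--Carleman--Ahlfors theorem. The specific contribution of Bergweiler--Eremenko is the observation that, in the meromorphic setting, the hypothesis of finitely many critical values is precisely what forces the second alternative in the first step for every asymptotic value (including $a_i=\infty$), so that the classical tract-counting argument can be applied uniformly.
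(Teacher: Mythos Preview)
The paper does not supply its own proof of this lemma; it is quoted verbatim as Corollary~3 of Bergweiler--Eremenko \cite{Bergweiler1995D2} and used as a black box. So there is no in-paper argument to compare against, and your sketch is in fact the standard route to the result.

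Your two-step outline is correct in spirit and matches the original. The first step---that finitely many critical values force every transcendental singularity of $f^{-1}$ to be logarithmic, so that each asymptotic value yields at least one unbounded simply connected tract on which $f$ is a universal cover of a punctured disc---is exactly how Bergweiler--Eremenko reduce Corollary~3 to their Theorem~1 (the bound $\max(1,2\rho)$ on direct singularities). The second step, bounding the number of disjoint tracts via the Ahlfors distortion inequality and the Cauchy--Schwarz trick $\sum 1/\theta_i\ge n^2/2\pi$, is the classical Denjoy--Carleman--Ahlfors mechanism.

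One point deserves a sentence more than you give it: the assertion that $\max_{|z|=r,\,z\in T_i}\operatorname{Re}\psi_i(z)$ is ``controlled in size by $T(r,f)$'' is not automatic, since this is a pointwise quantity rather than an integral mean. The standard fix is the Poisson--Jensen formula (or the associated maximum-modulus bound $\log^+\!\frac{1}{|f(z)-a_i|}\le \frac{R+r}{R-r}\,T(R,f)+O(1)$ for $|z|=r<R$), applied with $R=2r$; this supplies the missing link between the pointwise growth in each tract and the order $\rho$. With that inserted, your argument is complete.
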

\vskip 2mm
\par
Next we introduce the notion of the spherical derivative of a meromorphic function in the complex plane (cf.\cite{Hayman1964,Yang1993}): let $f$ be a non-constant meromorphic function. The spherical derivative of $f$ at $z\in \Bbb{C}$ is given as $f^{\#}(z)=\frac{|f'(z)|}{1+|f(z)|^2}.$ We recall the following result from Chang-Zalcman \cite{chang2008}:
\vskip 2mm
\par
\begin{lemma}\rm{}(\cite[Lemma 1]{chang2008})\label{Lemma2.21}. Let $f$ be a meromorphic function in
$\Bbb{C}.$ If $f$ has bounded spherical derivative in $\Bbb{C},$ then $f$ is of order at most $2.$ If, in addition, $f$ is an entire function, then
the order of $f$ is at most $1.$
\end{lemma}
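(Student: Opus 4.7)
The approach is to work with the Ahlfors-Shimizu form of the Nevanlinna characteristic,
\[
T_0(r,f) \;=\; \int_0^r \frac{A(t,f)}{t}\,dt,\qquad
A(t,f) \;=\; \frac{1}{\pi}\iint_{|z|<t}\bigl(f^{\#}(z)\bigr)^2 \, dA(z),
\]
where $dA$ denotes Lebesgue area measure on $\Bbb{C}$, together with the classical identity $T(r,f) = T_0(r,f) + O(1)$ as $r\to\infty$. The key point is that $A(t,f)$ is built directly from the spherical derivative, so a pointwise bound on $f^{\#}$ converts at once into a bound on $T_0$.

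First I would settle the meromorphic case. Assuming $f^{\#}(z) \le M$ for all $z\in\Bbb{C}$ with some constant $M>0$, the integrand of $A(t,f)$ is bounded pointwise by $M^{2}$, giving $A(t,f) \le M^{2}t^{2}$ and hence $T_0(r,f) \le \tfrac{1}{2}M^{2}r^{2}$. Combined with $T(r,f) = T_0(r,f) + O(1)$, this yields $T(r,f) = O(r^{2})$, so Definition \ref{definition1.1} immediately produces $\rho(f) \le 2$, which is the first assertion.

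For the entire case one must improve the exponent from $2$ to $1$. Since $N(r,f)\equiv 0$ we have $T(r,f) = m(r,f) \le \log^{+}M(r,f)$, so it suffices to prove $\log M(r,f) = O(r)$. The plan is to invoke a Clunie-Hayman style estimate: the hypothesis $f^{\#}\le M$ is equivalent to $|f'(z)| \le M\bigl(1+|f(z)|^{2}\bigr)$ and, by integration along segments, to the Lipschitz bound $\chi(f(z_{1}),f(z_{2})) \le M|z_{1}-z_{2}|$ in the spherical metric $\chi$ on the Riemann sphere. Because $f$ is entire, its image omits $\infty$, and this geometric restriction, combined with a Cauchy-type estimate for $f'$ on a fixed small disk centered at a point of maximum modulus on $|z|=r$, forces $\log M(r,f) = O(r)$. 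Therefore $\rho(f) \le 1$, as required.

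The main obstacle is this final step. The Ahlfors-Shimizu area bound does not distinguish entire from meromorphic functions---it is insensitive to whether $f$ assumes $\infty$---so it necessarily loses a full factor of $r$ in the entire case. Recovering it requires a genuinely geometric argument using that the image of $f$ lies entirely in the finite plane, and it is this conversion from a sphere-area estimate to a plane-growth estimate that forms the nontrivial heart of the entire case, whereas the meromorphic part follows in essentially one line from the area interpretation of $A(t,f)$.
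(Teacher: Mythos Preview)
Your treatment of the meromorphic case is correct and is exactly the argument the paper indicates in Remark~\ref{Remark2.5}: bound the integrand of the Ahlfors--Shimizu area $A(t,f)$ pointwise by $M^2$, integrate, and use $T(r,f)=\mathring{T}(r,f)+O(1)$ to obtain $T(r,f)=O(r^2)$ and hence $\rho(f)\le 2$.

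For the entire case, the paper does not supply its own argument either; Remark~\ref{Remark2.5} simply observes that this direction ``is much subtler'' and invokes Clunie--Hayman \cite[Theorem~3]{Clunie1965} as a black box. Your sketch is therefore not being compared against an actual proof in the paper. That said, your outline has a genuine gap at the step you yourself flag: the sentence ``combined with a Cauchy-type estimate for $f'$ on a fixed small disk centered at a point of maximum modulus on $|z|=r$, forces $\log M(r,f)=O(r)$'' is not an argument. A Cauchy estimate on a disk of fixed radius $\delta$ gives $|f'(z_0)|\le \delta^{-1}M(r+\delta,f)$, which combined with $|f'(z_0)|\le M(1+|f(z_0)|^2)$ only relates $M(r,f)$ to $M(r+\delta,f)$ in the wrong direction; it does not by itself yield $\log M(r,f)=O(r)$. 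The actual Clunie--Hayman mechanism is more delicate (it goes through an estimate on how large the image of a disk can be under a map with bounded spherical derivative that omits $\infty$), and your Lipschitz observation $\chi(f(z_1),f(z_2))\le M|z_1-z_2|$ is the right starting point but not the finish. So your proposal for the entire case is at the same level of completeness as the paper's --- a citation rather than a proof --- but the specific mechanism you suggest for closing the gap would need substantial additional work to be made rigorous.
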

\vskip 2mm
\par
\begin{remark}\rm{}\label{Remark2.5} Following He-Xiao \cite[pp.53-55]{HeXiao1988}, we introduce the Ahlfors-Shimizu's characteristic function of a meromorphic function: suppose that $f$ is a meromorphic function in the complex plane. Then, the Ahlfors-Shimizu$'$s characteristic function of $f,$ denoted as $\mathring{T}(r,f),$ is defined as
\begin{equation}\nonumber
\mathring{T}(r,f)=\int_0^r\frac{1}{t}\left(\frac{1}{\pi}\iint \limits_{|z|\leq t}(f^{\#}(z))^2dxdy\right)dt,
\end{equation}
where $z=x+yi$ with $x,y\in\Bbb{R}.$ The Nevanlinna's characteristic function $T (r,f)$ and the Ahlfors-Shimizu's characteristic function $\mathring{T}(r, f )$ differ by a bounded quantity that is independent of $r\in (0, +\infty)$ (cf.\cite[p.56]{HeXiao1988}). This implies the first part of Lemma \ref{Lemma2.21}. The result of Lemma \ref{Lemma2.21} for entire functions is much subtler, which is a special case of
Clunie-Hayman \cite[Theorem 3]{Clunie1965}.
\end{remark}
\vskip 2mm
\par We also need some notions of normal families of meromorphic functions in a domain of the complex plane.
which can be found, for example in  Montel \cite{Montel1927} and Hayman \cite[pp.157-160]{Hayman1964}. For the convenience of the readers, we give the detail of the relevant notions as follows: following  Montel \cite{Montel1927}, we call a class $\mathcal{F}$ of meromorphic functions in a domain $D\subseteq\Bbb{C}$ to be normal in the domain $D,$ provided that for any given sequence $\{f_n(z)\}$ of meromorphic functions in  $\mathcal{F},$ we can find a subsequence $\{f_{n_p}(z)\}\subseteq\{f_n(z)\}$ which converges everywhere in $D$ and uniformly on compact subsets of $D$ with respect to the chordal metric on Riemann sphere. We then say that $\{f_{n_p}(z)\}$  converges locally uniformly in $D.$ An equivalent statement is that for every $z_0$ in $D$ there exists a neighbourhood $|z-z_0|<\delta$ in which $\{f_{n_p}(z)\}$ or  $\left\{\frac{1}{f_{n_p}(z)}\right\}$ converges uniformly as $p\rightarrow\infty.$ This implies that if $w_1$ and $w_2$ are two points in the $w$-plane, their distance in the chordal metric of Riemann sphere is
 \begin{equation}\nonumber
k(w_1, w_2)=\frac{|w_1-w_2|}{\sqrt{(1+|w_1|^2)(1+|w_2|^2)}}\leq |w_1-w_2|
\end{equation}
Also
 \begin{equation}\nonumber
k(w_1, w_2)=\frac{|\frac{1}{w_1}-\frac{1}{w_2}|}{\sqrt{(1+|\frac{1}{w_1}|^2)(1+|\frac{1}{w_2}|^2)}}\leq \left|\frac{1}{w_1}-\frac{1}{w_2}\right|.
\end{equation}
Thus, if either $f_n(z)\rightarrow f(z)$ or $\frac{1}{f_n(z)}\rightarrow \frac{1}{f(z)}$ uniformly in a set $E\subseteq\Bbb{C}\cup\{\infty\},$ then
$k(f_n(z), f(z))\rightarrow 0,$ uniformly in $E.$ Thus, if one of these two conditions holds uniformly in some neighbourhood of every point of $D,$
then $k(f_n(z), f(z))\rightarrow 0$ uniformly in some neighbourhood of every point of $D,$  and hence by the Heine-Borel theorem uniformly on every compact subset of $D;$ see, for example, Hayman \cite[pp.157-160]{Hayman1964}. We recall the following result from Steinmetz \cite{Steinmetz2012}:
\vskip 2mm
\par
\begin{lemma}\rm{}(\cite[Rescaling Lemma 7.1.]{Steinmetz2012})\label{Lemma2.22}.
Let $f$ and $g$ be two non-constant meromorphic functions in the extended complex plane, and let $a_1,$ $a_2,$ $a_3,$ $a_4$ be four distinct complex values in the extended complex plane such that $f$ and $g$ share $a_1,$ $a_2,$ $a_3,$ $a_4$ IM. Then, either the spherical derivatives $f^{\#}(z)$
and $g^{\#}(z)$ are uniformly bounded on $\Bbb{C},$ or else there exist
sequences $\{z_m\}_{m=1}^{+\infty}\subset\Bbb{C}$ and $\{\rho_m\}_{m=1}^{+\infty}\subset(0, +\infty)$ with $\rho_m\rightarrow 0,$ as $m\rightarrow+\infty,$ such that the sequences $\{f_m(z)\}_{m=1}^{+\infty}$ and $\{g_m(z)\}_{m=1}^{+\infty}$ with $f_m(z)$ and $g_m(z)$ being defined as $f_m(z) = f(z_m+\rho_mz)$ and $g_m(z) = g(z_m+\rho_mz)$ respectively for each $m\in\Bbb{Z}^{+},$  simultaneously tend to non-constant meromorphic functions
$\hat{f}(z)$  and $\hat{g}(z)$ in the complex plane respectively, such that $\hat{f}(z)$  and $\hat{g}(z)$  have uniformly bounded spherical derivatives on $\Bbb{C},$ and such that  $\hat{f}(z)$  and $\hat{g}(z)$ share the four complex values $a_1,$ $a_2,$ $a_3,$ $a_4$ IM.
\end{lemma}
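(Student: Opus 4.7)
My plan is to prove the lemma by the classical Zalcman rescaling method combined with Picard's theorem. The first alternative---that both $f^{\#}$ and $g^{\#}$ are uniformly bounded on $\Bbb{C}$---requires no work, so I would assume, without loss of generality, that $f^{\#}$ is unbounded on $\Bbb{C}$ (the case where $g^{\#}$ is unbounded is symmetric). The family $\mathcal{F}=\{f(\cdot + c):c\in\Bbb{C}\}$ is then non-normal on $\Bbb{C}$ by Marty's theorem, and the classical Zalcman rescaling lemma produces sequences $\{z_m\}\subset\Bbb{C}$ and $\{\rho_m\}\subset(0,+\infty)$ with $\rho_m\to 0$ such that $f_m(z):=f(z_m+\rho_m z)$ converges locally uniformly (in the chordal metric) on $\Bbb{C}$ to a non-constant meromorphic function $\hat{f}$; by construction $\hat{f}^{\#}$ is uniformly bounded on $\Bbb{C}$. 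These are exactly the sequences I want to use for $g$ as well.

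The heart of the proof, and the main obstacle, is to show that the companion sequence $g_m(z):=g(z_m+\rho_m z)$ is normal on $\Bbb{C}$. I would argue by contradiction: if $\{g_m\}$ fails to be normal at some $w_0\in\Bbb{C}$, a second application of Zalcman yields $w_m\to w_0$, $\sigma_m\to 0^+$, and a subsequence such that $\tilde{g}_m(\zeta):=g_m(w_m+\sigma_m\zeta)$ converges locally uniformly to a non-constant meromorphic $\tilde{g}$ on $\Bbb{C}$. Because $f_m\to\hat{f}$ locally uniformly and $\sigma_m\to 0$, the corresponding sequence $\tilde{f}_m(\zeta):=f_m(w_m+\sigma_m\zeta)$ collapses, in the chordal metric, to the constant $c:=\hat{f}(w_0)\in\Bbb{C}\cup\{\infty\}$. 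For every $a_i$ distinct from $c$ and every compact $K\subset\Bbb{C}$, the preimage $\tilde{f}_m^{-1}(a_i)\cap K$ is eventually empty; the IM sharing of $f$ and $g$ transfers this emptiness to $\tilde{g}_m^{-1}(a_i)\cap K$, and Hurwitz's theorem then forces $\tilde{g}$ to avoid $a_i$. Thus $\tilde{g}$ avoids at least three values in $\Bbb{C}\cup\{\infty\}$, contradicting Picard's theorem. This proves normality of $\{g_m\}$.

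After extracting a locally uniformly convergent subsequence $g_m\to\hat{g}$, I would run the analogous Picard--Hurwitz argument in reverse to exclude the possibility that $\hat{g}$ is constant: if $\hat{g}\equiv c$, then for each $a_i\neq c$ the preimage $g_m^{-1}(a_i)$ is eventually empty on compact sets, hence by IM sharing so is $f_m^{-1}(a_i)$, forcing $\hat{f}$ to avoid at least three values, which contradicts Picard since $\hat{f}$ is non-constant meromorphic on $\Bbb{C}$. Therefore $\hat{g}$ is non-constant. Marty's theorem applied to the normal family $\{g_m\}$ then gives uniform boundedness of $\hat{g}^{\#}$ on $\Bbb{C}$, and a final application of Hurwitz to the zeros of $f_m-a_i$ and $g_m-a_i$ shows that $\hat{f}$ and $\hat{g}$ share $a_1,a_2,a_3,a_4$ IM, completing the proof.

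The delicate point throughout is that one Zalcman rescaling adapted to $f$ could in principle drive $g$ to degenerate. The shared value condition, funnelled through Hurwitz and Picard, is exactly what rules out both this degeneration and the need for a third round of rescaling; the whole argument uses only $0$-degree rescaling of the domain (no rescaling of values), so target values remain in $\Bbb{C}\cup\{\infty\}$ and Picard's theorem applies without modification.
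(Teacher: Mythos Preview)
The paper does not give its own proof of this lemma; it is quoted verbatim from Steinmetz~\cite{Steinmetz2012} and used as a black box. Your Zalcman--Hurwitz--Picard outline is exactly the standard route (and is how Steinmetz proves it), so in spirit your approach matches the cited source.

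There is, however, one genuine soft spot. Your sentence ``Marty's theorem applied to the normal family $\{g_m\}$ then gives uniform boundedness of $\hat{g}^{\#}$ on $\Bbb{C}$'' does not do what you claim. Marty's criterion, applied to a normal family on $\Bbb{C}$, yields only \emph{local} uniform bounds on $g_m^{\#}$, hence merely that $\hat{g}^{\#}$ is finite on every compact set---which is automatic for a continuous function. It does not give a single global bound. The fix is to run your step~2 argument once more, now with $(\hat f,\hat g)$ in place of $(f,g)$: if $\hat g^{\#}$ were unbounded, Zalcman applied to $\hat g$ would produce centres $z_m'$ and radii $\rho_m'\to 0$ with $\hat g(z_m'+\rho_m'\zeta)\to G$ non-constant; since $\hat f^{\#}\le M$ on $\Bbb{C}$, the companion rescalings satisfy
\[
\bigl(\hat f(z_m'+\rho_m'\,\cdot)\bigr)^{\#}(\zeta)=\rho_m'\,\hat f^{\#}(z_m'+\rho_m'\zeta)\le \rho_m' M\to 0,
\]
so they collapse to a constant, and your Hurwitz--Picard contradiction (three omitted values for $G$) fires exactly as before. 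With this extra paragraph inserted, the proof is complete.
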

\vskip 2mm
\par
\begin{remark}\rm{}\label{Remark2.6} From Lemma \ref{Lemma2.21} and Lemma \ref{Lemma2.22} we deduce that the orders of the non-constant meromorphic functions  $\hat{f}(z)$  and $\hat{g}(z)$ in Lemma \ref{Lemma2.22} satisfy $\rho(\hat{f})\leq 2$ and $\rho(\hat{g})\leq 2.$
\end{remark}
\vskip 2mm
\par
\maketitle
\section{Proof of Theorem \ref{Theorem1.1}}
First, before the formal beginning of the proof of Theorem \ref{Theorem1.1}, we want to point out that the lines from the next formal beginning of the proof of Theorem \ref{Theorem1.1} to the end of Case 1 of the next formal beginning of the proof of Theorem \ref{Theorem1.1} are covered essentially by the proof of  Steinmetz \cite[Theorem 6.2]{Steinmetz2012} even without any assumption on the order of growth. For the convenience of the readers, we give the detailed proof for this part on the assumption of finite order of growth as follows: first of all, we suppose,  without loss of generality, that $f$ has finite order $\rho(f)=:\rho<\infty.$ Combining this with Lemma \ref{lemma2.1}, Lemma \ref{Lemma2.2} (i) and the assumption that $f$ and $g$ are two distinct and non-constant meromorphic functions that share $a_1,$ $a_2,$ $a_3$ IM and $a_4$ CM, we deduce that $f$ and $g$ are two distinct transcendental meromorphic functions in the complex plane. From Lemma \ref{Lemma2.2}(i), Remark \ref{Remark2.3} and the assumption $\rho(f)<\infty,$ we have
\begin{equation}\label{eq3.1}
T(r,f)=T(r,g)+O(\log r), \, \, \text{as} \, \,  r\rightarrow\infty.
\end{equation}
From \eqref{eq3.1}, Theorem \ref{TheoremF} and Definition \ref{definition1.1}, we deduce that the orders of $f$ and $g$ are positive integers such that
\begin{equation}\label{eq3.2}
1\leq \rho(f)=\rho(g)=:\rho<\infty.
\end{equation}
Without loss of generality, we assume that $a_1=0,$ $a_2=1,$ $a_3=c$ and $a_4=\infty,$ and set
\begin{equation}\label{eq3.3}
\varphi:=\frac{f''}{f'}-\frac{f'}{f}-\frac{f'}{f-1}-\frac{f'}{f-c}-\frac{g''}{g'}+\frac{g'}{g}+\frac{g'}{g-1}+\frac{g'}{g-c}.
\end{equation}
By \eqref{eq3.2} and  Whittaker \cite[p. 82]{Whittaker1936} we have
\begin{equation}\label{eq3.4}
\rho(f)=\rho(f')=\rho(g)=\rho(g')<\infty.
\end{equation}
By \eqref{eq3.3}, \eqref{eq3.4} and Lemma \ref{lemma2.4} we have
\begin{equation}\label{eq3.5}
m(r,\varphi)=O(\log r), \, \, \text{as} \, \, r\rightarrow\infty.
\end{equation}
Since $f$ and $g$ share $a_{1},$ $a_{2},$ $a_{3}$ IM and $a_{4}$ CM, by simple calculating we can see that $\phi$ defined as \eqref{eq3.3} is analytic at any point $z\in\Bbb{C}$ such that $f(z)=g(z)=a_j$ for some $j=1,2,3,4,$  and so from Lemma \ref{Lemma2.3}(i) and Remark \ref{Remark2.3} we have
\begin{equation}\label{eq3.6}
N(r,\varphi)\leq N_0\left(r,\frac{1}{f'}\right)+N_0\left(r,\frac{1}{g'}\right)=O(\log r).
\end{equation}
From \eqref{eq3.5} and \eqref{eq3.6} we have
\begin{equation}\label{eq3.7}
T(r,\varphi)=m(r,\varphi)+N(r,\varphi)=O(\log r).
\end{equation}
From \eqref{eq3.7} and Lemma \ref{Lemma2.5} we see that $\phi$ is a rational function. Combining this with \eqref{eq3.3}, we deduce that \eqref{eq3.3} can be rewritten into
\begin{equation}\label{eq3.8}
\varphi=P_1+\sum\limits_{j=1}^{q_1} \frac{m_j}{z-z_j},
\end{equation}
where $P_{1}$ is reduced to zero or a non-vanishing polynomial, $q_1$ is a non-negative integer such that $\sum\limits_{j=1}^{q_1} \frac{m_j}{z-z_j}$ in the right hand side of \eqref{eq3.8} is reduced to zero when $q_1=0,$ and $m_j$ with $1\leq j\leq q_1$ and $j\in\Bbb{Z}$ is some non-zero integer when $q_1\geq 1$ and $q_1\in\Bbb{Z},$ while $z_1,$  $z_2,$ $\ldots,$ $z_{q_1}$  are those distinct points in $\Bbb{C}$ such that $f'(z_j)=0$ and $g'(z_j)(f(z_j)-a_l)(g(z_j)-a_l)\neq 0$ with $1\leq j\leq q_1$ and $j\in\Bbb{Z}$ for some $l\in\{1,2,3,4\},$ or $g'(z_j)=0$ and $f'(z_j)(f(z_j)-a_l)(g(z_j)-a_l)\neq 0$ with $1\leq j\leq q_1$ and $j\in\Bbb{Z}$ for some $l\in\{1,2,3,4\},$ or $z_j$ with $1\leq j\leq q_1$ and $j\in\Bbb{Z}$ is such a common zero of $f'$ and $g'$ in $\Bbb{C}$ with different multiplicities for $f'$ and $g',$ but $(f(z_j)-a_l)(g(z_j)-a_l)\neq 0$ with $1\leq j\leq q_1$ and $j\in\Bbb{Z}$ for all $l\in\{1,2,3,4\}$  when $q_1\geq 1$ and
$q_1\in\Bbb{Z}.$ By \eqref{eq3.3} and \eqref{eq3.8} we have
\begin{equation}\label{eq3.9}
\frac{f''}{f'}-\frac{f'}{f}-\frac{f'}{f-1}-\frac{f'}{f-c}-\frac{g''}{g'}+\frac{g'}{g}+\frac{g'}{g-1}+\frac{g'}{g-c}=P_1+\sum\limits_{j=1}^{q_1} \frac{m_j}{z-z_j}.
\end{equation}
We integrate both sides of \eqref{eq3.9}, and then we have
\begin{equation}\label{eq3.10}
\frac {f'g(g-1)(g-c)} {g'f(f-1)(f-c)}=Re^P,
\end{equation}
where $R$ is reduced to the constant $1$ when $q_1=0,$ and $R(z)=\prod\limits_{j=1}^{q_1}(z-z_j)^{m_j}$ when $q_1\geq 1,$ while $P$ is a complex constant or a non-constant polynomial such that
\begin{equation}\label{eq3.11}
P=\int_{0}^z P_1(\eta)d\eta+A,
\end{equation}
where $A$ is a finite complex constant. Next we rewrite \eqref{eq3.10} into
\begin{equation}\label{eq3.12}
\frac {f'} {f(f-1)(f-c)}=Re^P\frac {g'} {g(g-1)(g-c)}.
\end{equation}
\vskip 2mm
\par On the other hand, from \eqref{eq3.2}, Lemma \ref{Lemma2.2}(i), Lemma \ref{Lemma2.3}(ii), Lemma \ref{lemma2.4}, Remark \ref{Remark2.1}-Remark \ref{Remark2.3} and the assumption that $f$ and $g$ are non-constant meromorphic functions that share $0,$ $1,$ $c$ IM and $\infty$ CM, we have
from we have
\begin{equation}\label{eq3.13}
2T(r,f)=\overline{N}\left (r, f\right )+\overline{N}\left (r,\frac {1} {f} \right )+\overline{N} \left(r,\frac {1} {f-1} \right)+\overline{N} \left(r,\frac {1} {f-c} \right)+O(\log r),
\end{equation}
and
\begin{equation}\label{eq3.14}
N^{\ast}(r,0;f,g)+N^{\ast}(r,1;f,g)+N^{\ast}(r,c;f,g)+N_{(2}\left(r,f\right)=O(\log r), \, \, \text{as} \, \, r\rightarrow\infty.
\end{equation}
Here and in what follows, $N_{(2}\left(r,f\right)$ denotes the counting function of the multiple poles of $f$ in $|z|<r.$ From \eqref{eq3.13}, \eqref{eq3.14} and Lemma \ref{Lemma2.6} we have
\begin{equation}\label{eq3.15}
\frac {1}{7} T(r,f)\leq\sum\limits_{j=1}^{3}\sum\limits_{l=1}^{6}\overline{N}_{(1,l)}\left(r,a_j;f,g\right)
+\sum\limits_{j=1}^{3}\sum\limits_{m=2}^{6}\overline{N}_{(m,1)}\left(r,a_j;f,g\right)
+O(\log r),
\end{equation}
 as $r\rightarrow\infty.$ From \eqref{eq3.12} we consider the following two cases:
\vskip 2mm
\par {\bf Case 1.} Suppose that $P_1$ is reduced to the constant $0.$ Then, it follows from \eqref{eq3.11} that $P$ is reduced to the finite complex constant $A.$ Therefore, \eqref{eq3.12} can be rewritten into
\begin{equation}\label{eq3.16}
\frac {f'} {f(f-1)(f-c)}=Re^A\frac {g'} {g(g-1)(g-c)}.
\end{equation}
According to \eqref{eq3.12}, \eqref{eq3.15} and the assumption that $f$ and $g$ share $0,$ $1,$ $c$ IM and $\infty$ CM, we have a discussion as follows:
\vskip 2mm
\par {\bf Subcase 1.1.} Suppose that
\begin{equation}\label{eq3.17}
\overline{N}_{(1,1)}\left (r,0;f,g \right )+\overline{N}_{(1,1)}\left (r,1;f,g \right ) +\overline{N}_{(1,1)}\left (r,c;f,g \right ) \neq O(\log r), \, \, \, \text{as} \, \, r\rightarrow\infty.
\end{equation}
Here and in what follows, $\overline{N}_{(1,1)}\left (r,a;f,g \right )$ denotes the reduced counting function of the common simple zeros of $f-a$ and $g-a$ in $|z|<r$ for the value $a\in\{0,1,c\}.$ From \eqref{eq3.17} we suppose, without loss of generality, that
\begin{equation}\label{eq3.18}
\overline{N}_{(1,1)}\left (r,0;f,g \right) \neq O(\log r), \, \, \, \text{as} \, \, r\rightarrow\infty.
\end{equation}
From \eqref{eq3.18} we see that there exists
an a infinite sequence $\{z_{1,j}\}_{j=1}^{\infty}$ of the common simple zeros of $f$ and $g$ in the complex plane, such that $z_{1,j}\rightarrow\infty,$ as $j\rightarrow\infty.$ Then, it follows by \eqref{eq3.16} that $R(z_{1,j})e^A=1$ for each $j\in\Bbb{Z^{+}}.$ Combining this with the fact that $R$ is a non-vanishing rational function, we deduce that $Re^A$ is reduced to the constant $1,$ and so \eqref{eq3.16} can be rewritten into
\begin{equation}\label{eq3.19}
\frac {f'} {f(f-1)(f-c)}=\frac {g'} {g(g-1)(g-c)}.
\end{equation}
 From \eqref{eq3.19} and the assumption that $f$ and $g$ share $0,$ $1,$ $c$ IM we deduce that $f$ and $g$ share $0,$ $1,$ $c$ CM. This together with the assumption that $f$ and $g$ share $\infty$ CM, we get the assertion of Theorem \ref{Theorem1.1}.
\vskip 2mm
\par {\bf Subcase 1.2.} Suppose that there exists some positive integer $k$ satisfying $2\leq k\leq 6,$ such that
\begin{equation}\label{eq3.20}
\overline{N}_{(1,k)}\left (r,0;f,g \right )+\overline{N}_{(1,k)}\left (r,1;f,g \right ) +\overline{N}_{(1,k)}\left (r,c;f,g \right )\neq O(\log r), \, \, \text{as} \, \, r\rightarrow\infty,
\end{equation}
 where and in what follows, $\overline{N}_{(1,k)}\left (r,a;f,g \right )$ denotes the reduced counting function of those common zeros of $f-a$ and $g-a$ in $|z|<r$ that are simple zeros of $f-a,$ and are zeros of $g-a$ of multiplicity $k$ for $a\in\{0,1,c\}.$ From \eqref{eq3.20} we suppose, without loss of generality, that
 \begin{equation}\label{eq3.21}
\overline{N}_{(1,k)}\left (r,0;f,g \right )\neq O(\log r), \, \, \text{as} \, \, r\rightarrow\infty.
\end{equation}
 From \eqref{eq3.21} we see that there exists an infinite sequence $\{z_{2,j}\}_{j=1}^{\infty}$ of those common zeros of $f$ and $g$ in the complex plane  that are simple zeros of $f,$ and are zeros of $g$ of multiplicity $k.$ Combining this with \eqref{eq3.16}, we deduce
\begin{equation}\label{eq3.22}
R(z_{2,j})e^A=\frac{1}{k} \, \, \ \text{ for each } \, \, j\in\Bbb{Z^{+}}.
\end{equation}
Since $R$ is a non-vanishing rational function, we deduce from \eqref{eq3.22} that $Re^A$ is reduced to the constant $1/k,$ and so \eqref{eq3.16} can be rewritten into
\begin{equation}\label{eq3.23}
\frac {kf'} {f(f-1)(f-c)}=\frac {g'} {g(g-1)(g-c)}.
\end{equation}
 From \eqref{eq3.23}, the assumption that $f$ and $g$ share $0,$ $1,$ $c$ IM, and the supposition that $k$ is a positive integer satisfying $2\leq k\leq 6,$ we deduce that if $z_{0,j}$ is a zero of $f-a_j$ of multiplicity $\nu_{0,j}$ for $1\leq j\leq 3$ and $j\in\Bbb{Z},$ then $z_{0,j}$ is a zero of $g-a_j$ of multiplicity $k\nu_{0,j}$ for $1\leq j\leq 3$ and $j\in\Bbb{Z}.$ This implies that the functions
  \begin{equation}\label{eq3.24}
\varphi_1=:\frac {kf'} {f(f-1)}-\frac {g'} {g(g-1)}
\end{equation}
and
 \begin{equation}\label{eq3.25}
\varphi_2=:\frac {kf'} {f(f-c)}-\frac {g'} {g(g-c)}
\end{equation}
are two entire functions.  Therefore, from \eqref{eq3.2}, \eqref{eq3.24} and Lemma \ref{lemma2.4}, we deduce
  \begin{equation}\label{eq3.26}
  \begin{aligned}
T(r,\varphi_1)&=m(r,\varphi_1)=m\left(r,\frac {kf'} {f(f-1)}-\frac {g'} {g(g-1)}\right)\\
&=m\left(r,\frac {kf'} {f-1}-\frac {kf'} {f}-\frac {g'} {g-1}+\frac {g'} {g}\right)\leq m\left(r,\frac {f'} {f-1}\right)+m\left(r,\frac {f'} {f}\right)\\
&\quad+m\left(r,\frac {g'} {g-1}\right)+m\left(r,\frac {g'} {g}\right)+O(1)=O(\log r)
  \end{aligned}
\end{equation}
and
 \begin{equation}\label{eq3.27}
  \begin{aligned}
T(r,\varphi_2)&=m(r,\varphi_2)=m\left(r,\frac {kf'} {f(f-c)}-\frac {g'} {g(g-c)}\right)\\
&=m\left(r,\frac{1}{c}\left(\frac {kf'} {f-c}-\frac {kf'} {f}-\frac {g'} {g-c}+\frac {g'} {g}\right)\right)\leq m\left(r,\frac {f'} {f-c}\right)\\
&\quad +m\left(r,\frac {f'} {f}\right)+m\left(r,\frac {g'} {g-c}\right)+m\left(r,\frac {g'} {g}\right)+O(1)=O(\log r),
  \end{aligned}
\end{equation}
as $r\rightarrow\infty.$ From \eqref{eq3.26} and \eqref{eq3.27} we deduce that either $\varphi_j=0$ with $j\in\{1,2\}$ or that $\varphi_j$ with $j\in\{1,2\}$ is reduced to a non-vanishing polynomial. Suppose that $\varphi_1=0.$ Then, it follows by \eqref{eq3.24} that
  \begin{equation}\label{eq3.28}
\frac {kf'} {f(f-1)}=\frac {g'} {g(g-1)}.
\end{equation}
 By \eqref{eq3.23} and \eqref{eq3.28} we deduce $f=g.$ This contradicts the assumption that $f$ and $g$ are two distinct and non-constant meromorphic functions.
 Similarly, we get a contradiction from \eqref{eq3.23} and \eqref{eq3.25} provided that $\varphi_2=0.$ Next we suppose that $\varphi_1$ and $\varphi_2$ are non-vanishing  polynomials. Integrating both sides of \eqref{eq3.24}, we get
 \begin{equation}\label{eq3.29}
 \log \frac{g(f-1)^k}{f^k(g-1)}=\int^z_{0}\varphi_1(\eta)d\eta+c_1,
 \end{equation}
where $c_1$ is a finite complex constant. By \eqref{eq3.29} we have
\begin{equation}\label{eq3.30}
 \frac{g(f-1)^k}{f^k(g-1)}=A_1e^{\varphi_3}=:f_1\quad\text{with}\quad A_1=e^{c_1}\quad\text{and}\quad \varphi_3(z)=\int^z_{0}\varphi_1(\eta)d\eta.
 \end{equation}
Next we rewrite \eqref{eq3.25} into
 \begin{equation}\label{eq3.31}
\frac {kf'} {f-c}-\frac {kf'} {f}-\frac {g'} {g-c}+\frac {g'} {g}=c\varphi_2.
\end{equation}
Integrate both sides of \eqref{eq3.31} and we get
\begin{equation}\label{eq3.32}
 \log \frac{g(f-c)^k}{f^k(g-c)}=c\int^z_{0}\varphi_2(\eta)d\eta+c_2,
 \end{equation}
where $c_2$ is a finite complex constant. By \eqref{eq3.32} we have
\begin{equation}\label{eq3.33}
 \frac{g(f-c)^k}{f^k(g-c)}=A_2e^{\varphi_4}=:f_2\quad\text{with}\quad A_2=e^{c_2}\quad\text{and}\quad \varphi_4(z)=c\int^z_{0}\varphi_2(\eta)d\eta.
 \end{equation}
From \eqref{eq3.23} and the assumption that $f$ and $g$ share $0,$ $1,$ $\infty$ IM, we deduce that if $z_{0,j}\in\Bbb{C}$ with  $1\leq j\leq 3$ and $j\in\Bbb{Z}$ is a common zero of $f-a_j$ and $g-a_j$  with $1\leq j\leq 3$ and $j\in\Bbb{Z},$ then $ z_{0,j}$ is a zero of $g-a_j$ of multiplicity larger than or equal to $k$ with $2\leq k\leq 6$ and $k\in\Bbb{Z}$ for $1\leq j\leq 3$ and $j\in\Bbb{Z}.$ Therefore, from \eqref{eq3.2}, Lemma \ref{Lemma2.2}(ii), Remark \ref{Remark2.1}, Lemma \ref{lemma2.4} and the first fundamental theorem, we deduce
\begin{equation*}\nonumber
\begin{aligned}
2T(r,g)&=\overline{N}\left(r,g\right)+\overline{N}\left(r,\frac{1}{g}\right)+\overline{N}\left(r,\frac{1}{g-1}\right)+\overline{N}\left(r,\frac{1}{g-c}\right)+O(\log r)\\
&\leq \overline{N}\left(r,g\right)+N\left(r,\frac{1}{g'}\right)+O(\log r)\\
&=\overline{N}\left(r,g\right)+m(r,g')+N(r,g')-m\left(r,\frac{1}{g'}\right)+O(\log r)+O(1)\\
&\leq 2\overline{N}\left(r,g\right)+m(r,g)+N(r,g)-m\left(r,\frac{1}{g'}\right)+m\left(r,\frac{g'}{g}\right)\\
&\quad +O(\log r)+O(1)\leq  2\overline{N}\left(r,g\right)+T(r,g)+O(\log r),\\
\end{aligned}
\end{equation*}
and so we have
\begin{equation}\label{eq3.34}
T(r,g)\leq  2\overline{N}\left(r,g\right)+O(\log r), \, \, \text{ as} \, \, r\rightarrow\infty.
\end{equation}
Since $f$ and $g$ are transcendental meromorphic functions such that $f$ and $g$ share $\infty$ CM, we deduce by \eqref{eq3.1}, \eqref{eq3.34} and Lemma \ref{Lemma2.5} that
\begin{equation}\label{eq3.35}
\lim\limits_{r\rightarrow\infty}\frac{\overline{N}_0\left(r,\infty; f,g\right)}{T(r,f)}=\lim\limits_{r\rightarrow\infty}\frac{\overline{N}_0\left(r,\infty;f,g\right)}{T(r,g)}
=\lim\limits_{r\rightarrow\infty}\frac{\overline{N}\left(r,g\right)}{T(r,g)}\geq \frac{1}{2},
\end{equation}
where and in what follows, $\overline{N}_0\left(r,\infty;f,g\right)$ denotes the reduced counting function of the common poles of $f$ and $g$ in $|z|<r.$
\vskip 2mm
\par By \eqref{eq3.30} and \eqref{eq3.33} we deduce that if $z_{12}\in\Bbb{C}$ is a common pole of $f$ and $g,$ then $z_{12}$ is a common zero of $f_1-1$ and $f_2-1.$ Combining this with \eqref{eq3.35} and the assumption that $f$ and $g$ share $\infty$ CM, we deduce
\begin{equation}\label{eq3.36}
\begin{aligned}
&\quad \lim\limits_{r\rightarrow\infty}\frac{\overline{N}_0\left(r,1; f_1, f_2\right)}{T(r,f)}=\lim\limits_{r\rightarrow\infty}\frac{\overline{N}_0\left(r,1; f_1, f_2\right)}{T(r,g)}\geq \lim\limits_{r\rightarrow\infty}\frac{\overline{N}_0\left(r,\infty; f,g\right)}{T(r,f)}\\
&=\lim\limits_{r\rightarrow\infty}\frac{\overline{N}_0\left(r,\infty;f,g\right)}{T(r,g)}
=\lim\limits_{r\rightarrow\infty}\frac{\overline{N}\left(r,g\right)}{T(r,g)}\geq \frac{1}{2}.
\end{aligned}
\end{equation}
From \eqref{eq3.30}, \eqref{eq3.33} and the Valiron-Mokhon$'$ko lemma (cf.\cite{23}) we deduce
\begin{equation}\label{eq3.37}
\overline{N}\left(r,f_1\right)+\overline{N}\left(r,f_2\right)
+\overline{N}\left(r,\frac{1}{f_1}\right)+\overline{N}\left(r,\frac{1}{f_2}\right)=0,
\end{equation}
\begin{equation}\label{eq3.38}
\begin{aligned}
T(r,f_1)&=T\left(r, \frac{g(f-1)^k}{f^k(g-1)}\right)\leq T\left(r,\frac{g}{g-1}\right)+T\left(r,\frac{(f-1)^k}{f^k}\right)\\
        &=kT(r,f)+T(r,g)+O(1)
\end{aligned}
\end{equation}
and
\begin{equation}\label{eq3.39}
\begin{aligned}
T(r,f_2)&=T\left(r, \frac{g(f-c)^k}{f^k(g-c)}\right)\leq T\left(r,\frac{g}{g-c}\right)+T\left(r,\frac{(f-c)^k}{f^k}\right)\\
        &=kT(r,f)+T(r,g)+O(1),
\end{aligned}
\end{equation}
as $r\rightarrow\infty.$ By \eqref{eq3.38} and \eqref{eq3.39} we have
\begin{equation}\label{eq3.40}
\begin{aligned}
T(r,f_1)+T(r,f_2)\leq 2kT(r,f)+2T(r,g)+O(1), \, \, \text{as} \, \, r\rightarrow\infty.
\end{aligned}
\end{equation}
By \eqref{eq3.1}, \eqref{eq3.36} and \eqref{eq3.40} we have
\begin{equation}\label{eq3.41}
 \lim\limits_{r\rightarrow\infty}\frac{\overline{N}_0\left(r,1; f_1, f_2\right)}{T(r,f_1)+T(r,f_2)}\geq \lim\limits_{r\rightarrow\infty}\frac{\overline{N}\left(r,g\right)}{2kT(r,f)+2T(r,g)+O(1)}\geq \frac{1}{4k+4}.
\end{equation}
By  \eqref{eq3.37}, \eqref{eq3.41} and Lemma \ref{lemma2.7}, we see that there exist two integers $s$ and $t$ with $|s|+|t|>0,$ such that $f^s_1f^t_2=1.$ This together with the definitions of $f_1$ and $f_2$ in \eqref{eq3.30} and \eqref{eq3.33} respectively gives
\begin{equation}\label{eq3.42}
\left(\frac{g(f-1)^k}{f^k(g-1)}\right)^s\left(\frac{g(f-c)^k}{f^k(g-c)}\right)^t=1.
\end{equation}
From \eqref{eq3.42} we have a discussion as follows:
\vskip 2mm
\par Suppose that one of $s$ and $t$ is equal to zero, say $s=0.$ Then $t\neq 0.$ Therefore, \eqref{eq3.42} can be rewritten into
\begin{equation}\label{eq3.43}
\left(\frac{f-c}{f}\right)^{kt}=\left(\frac{g-c}{g}\right)^t.
\end{equation}
 We take the Nevanlinna$'$s characteristic functions on both sides of \eqref{eq3.43}, and then use the Valiron-Mokhon$'$ko lemma (cf.\cite{23}) to get
 \begin{equation}\label{eq3.44}
k|t|T(r,f)=|t|T(r,g)+O(1), \, \, \text{as} \, \, r\rightarrow\infty.
\end{equation}
 By \eqref{eq3.1}, \eqref{eq3.44} and the supposition that $k$ is a positive integer such that $k\geq 2,$ we have $T(r,f)=O(\log r).$ Combining this with
Lemma \ref{Lemma2.5} and the supposition that $f$ is a transcendental meromorphic function in the complex plane, we get a contradiction.
 \vskip 2mm
\par Suppose that $s+t=0.$ Then $s=-t.$ Moreover, from $|s|+|t|> 0$ we see that $s$ and $t$ are two non-zero inters. Therefore,
\eqref{eq3.42} can be rewritten into
\begin{equation}\label{eq3.45}
\left(\frac{f-c}{f-1}\right)^{kt}=\left(\frac{g-c}{g-1}\right)^t.
\end{equation}
We take the Nevanlinna$'$s characteristic functions on both sides of \eqref{eq3.45} and then use the Valiron-Mokhon$'$ko lemma (cf.\cite{23}) to get \eqref{eq3.44}.
By \eqref{eq3.1}, \eqref{eq3.44} and the supposition that $k$ is a positive integer such that $k\geq 2,$ we have $T(r,f)=O(\log r).$ Combining this with
Lemma \ref{Lemma2.5} and the supposition that $f$ is a transcendental meromorphic function in the complex plane, we get a contradiction.
 \vskip 2mm
\par Suppose that $s+t\neq 0.$ We first rewrite \eqref{eq3.42} into
\begin{equation}\label{eq3.46}
\left(\frac{(f-1)^s(f-c)^t}{f^{s+t}}\right)^{k}=\frac{(g-1)^s(g-c)^t}{g^{s+t}}.
\end{equation}
We take the Nevanlinna$'$s characteristic functions on both sides of \eqref{eq3.46} and then use the supposition $s+t\neq 0$ and the Valiron-Mokhon$'$ko lemma (cf.\cite{23}) to get
\begin{equation}\label{eq3.47}
k|s+t|T(r,f)=|s+t|T(r,g)+O(1), \, \, \text{as} \, \, r\rightarrow\infty.
\end{equation}
By \eqref{eq3.1}, \eqref{eq3.47} and the supposition that $k$ is a positive integer such that $k\geq 2,$ we have $T(r,f)=O(\log r).$ Combining this with
Lemma \ref{Lemma2.5} and the supposition that $f$ is a transcendental meromorphic function in the complex plane, we get a contradiction.
\vskip 2mm
\par {\bf Subcase 1.3.} Suppose that there exists some positive integer $k$ satisfying $2\leq k\leq 6,$ such that
\begin{equation}\label{eq3.48}
\overline{N}_{(k,1)}\left (r,0;f,g \right )+\overline{N}_{(k,1)}\left (r,1;f,g \right ) +\overline{N}_{(k,1)}\left (r,c;f,g \right ) \neq O(\log r), \, \, \text{as} \, \,  r\rightarrow\infty.
\end{equation}
where and in what follows, $\overline{N}_{(k,1)}\left (r,a;f,g \right )$ denotes the reduced counting function of those common zeros of $f-a$ and $g-a$ in $|z|<r$ that are zeros of $f-a$ of multiplicity $k,$ and are simple zeros of $g-a$ for $a\in\{0,1,c\}.$ From \eqref{eq3.48} we suppose, without loss of generality, that
\begin{equation}\label{eq3.49}
\overline{N}_{(k,1)}\left (r,0;f,g \right ) \neq O(\log r), \, \, \text{as} \, \,  r\rightarrow\infty.
\end{equation}
From \eqref{eq3.49} we see that there exists an infinite sequence $\{z_{3,j}\}_{j=1}^{\infty}$ of those common zeros of $f$ and $g$ in the complex plane
that are zeros of $f$ of multiplicity $k,$ and simple zeros of $g.$ Combining this with \eqref{eq3.16}, we deduce
  \begin{equation}\label{eq3.50}
R(z_{3,j})e^A=k \, \, \ \text{ for each } \, \, j\in\Bbb{Z^{+}}.
\end{equation}
Since $R$ is a non-vanishing rational function, and $A$ is a complex constant, we deduce from \eqref{eq3.50} that $Re^A$ is reduced to the constant $k,$ and so \eqref{eq3.16} can be rewritten into
\begin{equation}\label{eq3.51}
\frac {f'} {f(f-1)(f-c)}=\frac {kg'} {g(g-1)(g-c)}.
\end{equation}
From \eqref{eq3.51}, the assumption that $f$ and $g$ share $0,$ $1,$ $c$ IM, and the supposition that $k$ is a positive integer satisfying $2\leq k\leq 6,$ we deduce that if $\tilde{z}_{0,j}$ is a zero of $g-a_j$ of multiplicity $\tilde{\nu}_{0,j}$ with $1\leq j\leq 3$ and $j\in\Bbb{Z},$ then $\tilde{z}_{0,j}$ is a zero of $f-a_j$ of multiplicity $k\tilde{\nu}_{0,j}$ with $1\leq j\leq 3$ and $j\in\Bbb{Z}.$ This implies that the functions
\begin{equation}\label{eq3.52}
\varphi_5=:\frac {f'} {f(f-1)}-\frac {kg'} {g(g-1)}
\end{equation}
and
 \begin{equation}\label{eq3.53}
\varphi_6=:\frac {f'} {f(f-c)}-\frac {kg'} {g(g-c)}
\end{equation}
are two entire functions.  Next we use \eqref{eq3.2}, \eqref{eq3.52}, \eqref{eq3.53},  Lemma \ref{lemma2.4}, the lines of \eqref{eq3.26} and \eqref{eq3.27} in Subcase 1.2 to deduce that either $\varphi_j=0$ with $j\in\{5,6\}$ or that $\varphi_j$ with $j\in\{5,6\}$ is reduced to a non-vanishing polynomial. Suppose that one of $\varphi_5=0$ and $\varphi_6=0$ holds. Then, we use \eqref{eq3.51}-\eqref{eq3.53} and the similar reasonings as in Subcase 1.2 to get a contradiction. Next we suppose that $\varphi_5$ and $\varphi_6$ are non-vanishing polynomials. Then, we use \eqref{eq3.52}, \eqref{eq3.53}, the the similar reasonings from the line before \eqref{eq3.29} to the line of \eqref{eq3.33} to deduce
\begin{equation}\label{eq3.54}
 \frac{(f-1)g^k}{f(g-1)^k}=A_3e^{\varphi_7}=:f_3 \quad\text{with}\quad  \varphi_7(z)=\int^z_{0}\varphi_5(\eta)d\eta.
 \end{equation}
and
\begin{equation}\label{eq3.55}
 \frac{(f-c)g^k}{f(g-c)^k}=A_4e^{\varphi_8}=:f_4\quad\text{with}\quad c\varphi_8(z)=\int^z_{0}\varphi_6(\eta)d\eta,
 \end{equation}
where and in what follows, $A_3$ and $A_4$ are two finite and non-zero complex constants.
From \eqref{eq3.51} and the assumption that $f$ and $g$ share $0,$ $1,$ $c$ IM, we deduce that if $\tilde{z}_{0,j}\in\Bbb{C}$ with $1\leq j\leq 3$ and $j\in\Bbb{Z}$ is a common zero of $f-a_j$ and $g-a_j$  with $1\leq j\leq 3$ and $j\in\Bbb{Z},$ then $ \tilde{z}_{0,j}$ is a zero of $f-a_j$ of multiplicity larger than or equal to $k$ with $2\leq k\leq 6$ and $k\in\Bbb{Z}$ for $1\leq j\leq 3$ and $j\in\Bbb{Z}.$ Therefore, from \eqref{eq3.2}, Lemma \ref{Lemma2.2}(ii), Remark \ref{Remark2.1}, Lemma \ref{lemma2.4} and the first fundamental theorem, we deduce
\begin{equation}\nonumber
\begin{aligned}
2T(r,f)&=\overline{N}\left(r,f\right)+\overline{N}\left(r,\frac{1}{f}\right)+\overline{N}\left(r,\frac{1}{f-1}\right)+\overline{N}\left(r,\frac{1}{f-c}\right)+O(\log r)\\
&\leq \overline{N}\left(r,f\right)+N\left(r,\frac{1}{f'}\right)+O(\log r)\\
&=\overline{N}\left(r,f\right)+m(r,f')+N(r,f')-m\left(r,\frac{1}{f'}\right)+O(\log r)+O(1)\\
&\leq 2\overline{N}\left(r,f\right)+m(r,f)+N(r,f)-m\left(r,\frac{1}{f'}\right)+m\left(r,\frac{f'}{f}\right)\\
&\quad +O(\log r)+O(1)\leq  2\overline{N}\left(r,f\right)+T(r,f)+O(\log r),\\
\end{aligned}
\end{equation}
and so we have
\begin{equation}\label{eq3.56}
T(r,f)\leq  2\overline{N}\left(r,f\right)+O(\log r), \, \, \text{ as} \, \, r\rightarrow\infty.
\end{equation}
Since $f$ and $g$ are transcendental meromorphic functions such that $f$ and $g$ share $\infty$ CM, we deduce by \eqref{eq3.1}, \eqref{eq3.56} and Lemma \ref{Lemma2.5} that
\begin{equation}\label{eq3.57}
\lim\limits_{r\rightarrow\infty}\frac{\overline{N}_0\left(r,\infty; f,g\right)}{T(r,f)}=\lim\limits_{r\rightarrow\infty}\frac{\overline{N}_0\left(r,\infty;f,g\right)}{T(r,g)}=\lim\limits_{r\rightarrow\infty}
\frac{\overline{N}\left(r,f\right)}{T(r,f)}\geq \frac{1}{2}.
\end{equation}
\vskip 2mm
\par From \eqref{eq3.54} and \eqref{eq3.55} we deduce that if $z_{34}\in\Bbb{C}$ is a common pole of $f$ and $g,$ then $z_{34}$ is a common zero of $f_3-1$ and $f_4-1.$ Combining this with \eqref{eq3.57} and the assumption that $f$ and $g$ share $\infty$ CM, we deduce
\begin{equation}\label{eq3.58}
\begin{aligned}
&\quad \lim\limits_{r\rightarrow\infty}\frac{\overline{N}_0\left(r,1; f_3, f_4\right)}{T(r,f)}=\lim\limits_{r\rightarrow\infty}\frac{\overline{N}_0\left(r,1; f_3, f_4\right)}{T(r,g)}\geq \lim\limits_{r\rightarrow\infty}\frac{\overline{N}_0\left(r,\infty; f,g\right)}{T(r,f)}\\
&=\lim\limits_{r\rightarrow\infty}\frac{\overline{N}_0\left(r,\infty;f,g\right)}{T(r,g)}=\lim\limits_{r\rightarrow\infty}\frac{\overline{N}\left(r,f\right)}{T(r,f)}\geq \frac{1}{2}.
\end{aligned}
\end{equation}
By \eqref{eq3.54}, \eqref{eq3.55} and the Valiron-Mokhon$'$ko lemma (cf.\cite{23}) we deduce
\begin{equation}\label{eq3.59}
\overline{N}\left(r,f_3\right)+\overline{N}\left(r,f_4\right)
+\overline{N}\left(r,\frac{1}{f_3}\right)+\overline{N}\left(r,\frac{1}{f_4}\right)=0,
\end{equation}
\begin{equation}\label{eq3.60}
\begin{aligned}
T(r,f_3)&=T\left(r,  \frac{(f-1)g^k}{f(g-1)^k} \right)\leq T\left(r,\frac{f-1}{f}\right)+T\left(r,\frac{g^k}{(g-1)^k}\right)\\
        &=T(r,f)+kT(r,g)+O(1)
\end{aligned}
\end{equation}
and
\begin{equation}\label{eq3.61}
\begin{aligned}
T(r,f_4)&=T\left(r,  \frac{(f-c)g^k}{f(g-c)^k}\right)\leq T\left(r,\frac{f-c}{f}\right)+T\left(r,\frac{g^k}{(g-c)^k}\right)\\
        &=T(r,f)+kT(r,g)+O(1),
\end{aligned}
\end{equation}
as $r\rightarrow\infty.$ By \eqref{eq3.60} and \eqref{eq3.61} we have
\begin{equation}\label{eq3.62}
T(r,f_3)+T(r,f_4)\leq 2T(r,f)+2kT(r,g)+O(1), \, \, \text{as} \, \, r\rightarrow\infty.
\end{equation}
By \eqref{eq3.1}, \eqref{eq3.58},  \eqref{eq3.62}, Lemma \ref{Lemma2.5} and the obtained result that $f$ and $g$ are transcendental meromorphic functions we deduce
\begin{equation}\label{eq3.63}
 \lim\limits_{r\rightarrow\infty}\frac{\overline{N}_0\left(r,1; f_3, f_4\right)}{T(r,f_3)+T(r,f_4)}\geq \lim\limits_{r\rightarrow\infty}\frac{\overline{N}\left(r,f\right)}{2T(r,f)+2kT(r,g)}\geq \frac{1}{4(k+1)}.
\end{equation}
By  \eqref{eq3.54}, \eqref{eq3.55}, \eqref{eq3.59}, \eqref{eq3.63} and Lemma \ref{lemma2.7}, we see that there exist some two integers $s$ and $t$ with $|s|+|t|>0,$ such that $f^s_3f^t_4=1.$ This together with the definitions of $f_3$ and $f_4$ in \eqref{eq3.54} and \eqref{eq3.55} respectively gives
\begin{equation}\label{eq3.64}
\left(\frac{(f-1)g^k}{f(g-1)^k}\right)^s\left(\frac{(f-c)g^k}{f(g-c)^k}\right)^t=1.
\end{equation}
Next we use \eqref{eq3.64} and the similar reasonings from the line after \eqref{eq3.42} to the end of Subcase 1.2 to get a contradiction.
\vskip 2mm
\par {\bf Case 2.} Suppose that $P_{1}$ is a non-zero polynomial, then it follows by the definition of $P$ in \eqref{eq3.11} we see that $P$ is a non-constant polynomial. Next we let
\begin{equation}\label{eq3.65}
P(z)=p_nz^n+p_{n-1}z^{n-1}+\cdots+p_1z+p_0,
\end{equation}
where $n$ is a positive integer, and $p_n,$ $p_{n-1},$ $\ldots,$ $p_1,$ $p_0$ with $p_n\neq 0$ are finite complex constants. Next we let $p_n=|p_n|e^{i\theta_{n}}$ with $|p_n|>0$ and $\theta_n \in [0,2\pi),$ and consider the following $2n$ angles for any given positive number $\varepsilon$ satisfying $0<\varepsilon<\frac{\pi}{8n}:$
\begin{equation}\nonumber
S_j: -\frac{\theta_{n}}{n}+(2j-1)\frac{\pi}{2n}+\varepsilon<\theta<-\frac{\theta_{n}}{n}+(2j+1)\frac{\pi}{2n}-\varepsilon,
\end{equation}
where $j$ is an integer satisfying $0\leq j\leq 2n-1.$ Then, it follows from \eqref{eq3.65} and Lemma \ref{lemma2.8} that there exists a positive number $R=R(\varepsilon)$ such that for $|z|=r>R,$ we have
\begin{equation}\label{eq3.66}
\text{Re}(P(z))>|p_{n}|(1-\varepsilon)r^n\sin(n\varepsilon), \, \, \text{when} \, \, z\in S_j \, \, \text{and} \, \, j \, \,\text{is an even integer},
\end{equation}
while
\begin{equation}\label{eq3.67}
\text{Re}(P(z))<-|p_{n}|(1-\varepsilon)r^n\sin(n\varepsilon), \, \, \text{when} \, \, z\in S_j \, \, \text{and} \, \, j \, \, \text{is an odd integer}.
\end{equation}
For convenience we next let
\begin{equation}\label{eq3.68}
\Omega\left(\alpha_{j,\varepsilon},\beta_{j,\varepsilon}\right)=\left\{z\in\Bbb{C}: \alpha_{j,\varepsilon}<\arg z<\beta_{j,\varepsilon}\right\}
\end{equation}
and
\begin{equation}\label{eq3.69}
\overline{\Omega}\left(\alpha_{j,2\varepsilon},\beta_{j,2\varepsilon}\right)=\left\{z\in\Bbb{C}: \alpha_{j,2\varepsilon}\leq \arg z\leq \beta_{j,2\varepsilon}\right\}
\end{equation}
with
\begin{equation}\label{eq3.70}
\alpha_{j,\varepsilon}=-\frac{\theta_{n}}{n}+(2j-1)\frac{\pi}{2n}+\varepsilon \quad\text{and}\quad\beta_{j,\varepsilon}=-\frac{\theta_{n}}{n}+(2j+1)\frac{\pi}{2n}-\varepsilon
\end{equation}
and
\begin{equation}\label{eq3.71}
\alpha_{j,2\varepsilon}=-\frac{\theta_{n}}{n}+(2j-1)\frac{\pi}{2n}+2\varepsilon \quad\text{and}\quad\beta_{j,2\varepsilon}=-\frac{\theta_{n}}{n}+(2j+1)\frac{\pi}{2n}-2\varepsilon
\end{equation}
for each integer $j$ satisfying $0\leq j\leq 2n-1.$ Then, we have the following claim:
\vskip 2mm
\par
{\bf Claim 2I.} Based upon the assumptions of Theorem \ref{Theorem1.1} and the supposition of Case 2, there are at most finitely many common zeros of $f-a_l$ and $g-a_l$ in $\overline{\Omega}\left(\alpha_{j,2\varepsilon},\beta_{j,2\varepsilon}\right)$ for each integer $l\in\{1,2,3\}$ and each integer $j$ satisfying $0\leq j\leq 2n-1.$
\vskip 2mm
\par We prove Claim 2I: on the contrary, we suppose that Claim 2I is not valid. Then, there are infinitely many zeros of $f-a_l$ in $\overline{\Omega}\left(\alpha_{j,2\varepsilon},\beta_{j,2\varepsilon}\right)$  for each integer $l\in\{1,2,3\}$ and each integer $j$ satisfying $0\leq j\leq 2n-1.$ Next we let $\{z_{2,l,m}\}_{m=1}^{+\infty}\subset\overline{\Omega}\left(\alpha_{j,2\varepsilon},\beta_{j,2\varepsilon}\right)$ be the infinite sequence
of all the zeros of $f-a_l$ on $\overline{\Omega}\left(\alpha_{j,2\varepsilon},\beta_{j,2\varepsilon}\right)$ for $l\in\{1,2,3\}$ and the integer $j$ satisfying $0\leq j\leq 2n-1,$ where each distinct point in $\{z_{2,l,m}\}_{m=1}^{+\infty}$ is repeated as many times as its multiplicity of a zero of $f-a_l$ with $l\in\{1,2,3\},$ and the infinite sequence of $\{z_{2,l,m}\}_{m=1}^{+\infty}$ is arranged according to increasing moduli, such that
\begin{equation}\label{eq3.72}
\lim\limits_{m\rightarrow\infty}|z_{2,l,m}|=\infty
\end{equation}
and
\begin{equation}\label{eq3.73}
|z_{2,l,1}|\leq  |z_{2,l,2}|\leq \cdots\leq |z_{2,l,m}|\leq \cdots
\end{equation}
 for $l\in\{1,2,3\}.$ We consider the following two subcases:
\vskip 2mm
\par
{\bf Subcase 2.1.} Suppose that $j$ with $0\leq j \leq 2n-1$ and $j\in\Bbb{Z}$ is an even integer. Then, by \eqref{eq3.12}, \eqref{eq3.66}, \eqref{eq3.72}, \eqref{eq3.73} and \eqref{eq3.14} we deduce that there exist some positive integer $N_{l,j}$ and a large positive number $\mathcal{R}_{l,j}$ satisfying $\mathcal{R}_{l,j}\geq \mathcal{R}$ for $l\in\{1,2,3\}$ and the even integer $j$ satisfying $0\leq j\leq 2n-1,$ such that
\begin{equation}\label{eq3.74}
|z_{2,l,m}|>\mathcal{R}_{l,j}, \, \, \text{as} \, \, m\geq N_{l,j}\, \, \text{and} \, \, m\in\Bbb{Z},
\end{equation}
and such that each point $z_{2,l,m}$ satisfying \eqref{eq3.74} is not a common multiple zero of $f-a_l$ and $g-a_l$ with $l\in\{1,2,3\},$ and satisfies
\begin{equation}\label{eq3.75}
n(z_{2,l,m})=|R(z_{2,l,m})e^{\text{Re}(P(z_{2,l,m}))}|\geq e^{|p_n|(1-2\varepsilon)|z_{2,l,m}|^n\sin(n\varepsilon)}\geq 4,
\end{equation}
as $ m\geq N_{l,j}$ and $m\in\Bbb{Z}$ for $l\in\{1,2,3\}$ and the even integer $j$ satisfying $0\leq j\leq 2n-1.$ Here and in what follows, $n(z_{2,l,m})$ denotes the multiplicity of $z_{2,l,m}$ of a zero of $f-a_l$ for $l\in\{1,2,3\},$ the positive integer $m$ satisfying $m\geq N_{l,j}$ and the even integer $j$ satisfying $0\leq j\leq 2n-1.$ Therefore, from \eqref{eq3.12}, \eqref{eq3.14}-\eqref{eq3.75}  and the assumption that $f$ and $g$ share $0,$ $1$ and $c$ IM, we deduce that $z_{2,l,m}$ is also a simple zero of $g-a_l$ for $l\in\{1,2,3\},$ the positive integer $m$ satisfying $m\geq N_{l,j},$ and the even integer $j$ satisfying $0\leq j\leq 2n-1.$ Next we use Lemma \ref{lemma2.10} for $\overline{\Omega}\left(\alpha_{j,2\varepsilon},\beta_{j,2\varepsilon}\right)$ to deduce
\begin{equation}\label{eq3.76}
\begin{aligned}
S_{\alpha_{j,2\varepsilon}, \beta_{j,2\varepsilon}}(r,f)&\leq\overline{C}_{\alpha_{j,2\varepsilon}, \beta_{j,2\varepsilon}}\left(r,\frac{1}{f-a_1}\right)+\overline{C}_{\alpha_{j,2\varepsilon}, \beta_{j,2\varepsilon}}\left(r,\frac{1}{f-a_2}\right)
\\
&\quad+\overline{C}_{\alpha_{j,2\varepsilon}, \beta_{j,2\varepsilon}}\left(r,\frac{1}{f-a_3}\right)+R_{\alpha_{j,2\varepsilon}\beta_{j,2\varepsilon}}(r,f)\\
\end{aligned}
\end{equation}
for the positive number $r$ such that $r>\max\limits_{l\in\{1,2,3\}}\max\limits_{0\leq \tilde{l}\leq n-1, \, \tilde{l}\in\Bbb{Z}}\mathcal{R}_{l,2\tilde{l}}\geq\mathcal{R}.$ From  \eqref{eq3.74}, \eqref{eq3.75} and Lemma \ref{lemma2.9} we have
\begin{equation}\label{eq3.77}
\begin{aligned}
&\quad \overline{C}_{\alpha_{j,2\varepsilon}, \beta_{j,2\varepsilon}}\left(r,\frac{1}{f-a_l}\right)\\
&=\left(\overline{C}_{\alpha_{j,2\varepsilon}, \beta_{j,2\varepsilon}}\left(r,\frac{1}{f-a_l}\right)-\overline{C}_{\alpha_{j,2\varepsilon}, \beta_{j,2\varepsilon}}\left(\mathcal{R}_{l,j},\frac{1}{f-a_l}\right)\right) \\
&\quad +\overline{C}_{\alpha_{j,2\varepsilon}, \beta_{j,2\varepsilon}}\left(\mathcal{R}_{l,j},\frac{1}{f-a_l}\right)\leq \frac{1}{4}C_{\alpha_{j,2\varepsilon}, \beta_{j,2\varepsilon}}\left(r,\frac{1}{f-a_l}\right)+O(1)\\
&\leq \frac{1}{4}S_{\alpha_{j,2\varepsilon}, \beta_{j,2\varepsilon}}\left(r,\frac{1}{f-a_l}\right)+O(1)=\frac{1}{4}S_{\alpha_{j,2\varepsilon} \beta_{j,2\varepsilon}}\left(r,f\right)+O(1)
\end{aligned}
\end{equation}
for the $l\in\{1,2,3\},$ the even integer $j$ satisfying $0\leq j\leq 2n-1,$ and the positive number $r$ such that $r>\max\limits_{l\in\{1,2,3\}}\max\limits_{0\leq \tilde{l}\leq n-1, \, \tilde{l}\in\Bbb{Z}}\mathcal{R}_{l,2\tilde{l}}\geq\mathcal{R}.$ From \eqref{eq3.2} and Lemma \ref{Lemma2.12} we have
\begin{equation}\label{eq3.78}
R_{\alpha_{j,2\varepsilon}\beta_{j,2\varepsilon}}(r,f)=O(1)
\end{equation}
for the even integer $j$ satisfying $0\leq j\leq 2n-1,$ and the positive number $r$ such that $r>\max\limits_{l\in\{1,2,3\}}\max\limits_{0\leq \tilde{l}\leq n-1, \, \tilde{l}\in\Bbb{Z}}\mathcal{R}_{l,2\tilde{l}}\geq\mathcal{R}.$ From \eqref{eq3.76}-\eqref{eq3.78} we deduce
\begin{equation}\label{eq3.79}
S_{\alpha_{j,2\varepsilon}, \beta_{j,2\varepsilon}}(r,f)=O(1)
\end{equation}
for the even integer $j$ satisfying $0\leq j\leq 2n-1,$ and the positive number $r$ such that $r>\max\limits_{l\in\{1,2,3\}}\max\limits_{0\leq \tilde{l}\leq n-1, \, \tilde{l}\in\Bbb{Z}}\mathcal{R}_{l,2\tilde{l}}\geq\mathcal{R}.$
Next we use Lemma \ref{lemma2.9} and the reasonings in the proof of Zheng \cite[pp.53-54, Lemma 2.2.2]{Zheng2009} to deduce
\begin{equation}\label{eq3.80}
\begin{aligned}
&\quad S_{\alpha_{j,2\varepsilon}, \beta_{j,2\varepsilon}}\left(r,f\right)+O(1)=S_{\alpha_{j,2\varepsilon}, \beta_{j,2\varepsilon}}\left(r,\frac{1}{f-a_l}\right)\geq C_{\alpha_{j,2\varepsilon}, \beta_{j,2\varepsilon}}\left(r,\frac{1}{f-a_l}\right)\\
&\geq 2\sin\left(2\varepsilon\omega_{\alpha_{j,2\varepsilon}}\right)\sum_{|z_{2,l,m}|\in \Omega\left(\alpha_{j,2\varepsilon}, \beta_{j,2\varepsilon}\right)}\left(\frac{1}{|z_{2,l,m}|^{\omega_{j,2\varepsilon}}}-\frac{|z_{2,l,m}|^{\omega_{j,2\varepsilon}}}{r^{2\omega_{j,2\varepsilon}}}\right)+O(1)\\
&\geq 2\sin\left(2\varepsilon\omega_{j,2\varepsilon}\right)\sum_{|z_{2,l,m}|>\mathcal{R}_{l,j}}\left(\frac{1}{|z_{2,l,m}|^{\omega_{j,2\varepsilon}}}
-\frac{|z_{2,l,m}|^{\omega_{j,2\varepsilon}}}{r^{2\omega_{j,2\varepsilon}}}\right)+O(1)\\
&=2\sin\left(2\varepsilon\omega_{j,2\varepsilon}\right)\int_{1}^{r}\left(\frac{1}{t^{\omega_{j,2\varepsilon}}}-\frac{t^{\omega_{j,2\varepsilon}}}
{r^{2\omega_{j,2\varepsilon}}}\right)dn_{0}(t)+O(1)\\
&=2\omega_{j,2\varepsilon}\sin\left(2\varepsilon\omega_{j,2\varepsilon}\right)\int_{1}^{r}n_{0}(t)\left(\frac{1}{t^{1+\omega_{j,2\varepsilon}}}
+\frac{t^{\omega_{j,2\varepsilon}}-1}{r^{2\omega_{j,2\varepsilon}}}\right)dt+O(1)\\
&\geq 2\omega_{j,2\varepsilon} \sin\left(2\varepsilon\omega_{j,2\varepsilon}\right)\int_{1}^{r} \frac{1}{t^{\omega_{j,2\varepsilon}}}dN_{0}(t)+O(1)\\
&=2\omega_{j,2\varepsilon} \sin\left(2\varepsilon\omega_{j,2\varepsilon}\right)\frac{N_{0}(r)}{r^{\omega_{j,2\varepsilon}}}+2\omega^2_{j,2\varepsilon} \sin\left(2\varepsilon\omega_{j,2\varepsilon}\right)\int_{1}^{r} \frac{N_{0}(t)}{t^{1+\omega_{j,2\varepsilon}}}dt+O(1)\\
&\geq 2\omega_{j,2\varepsilon}  \sin\left(2\varepsilon\omega_{j,2\varepsilon}\right)\frac{N_{0}(r)}{r^{\omega_{j,2\varepsilon}}}+O(1)
\end{aligned}
\end{equation}
for $l\in\{1,2,3\},$ the even integer $j$ satisfying $0\leq j\leq 2n-1,$ and the positive number $r$ such that $r>\max\limits_{l\in\{1,2,3\}}\max\limits_{0\leq \tilde{l}\leq n-1, \, \tilde{l}\in\Bbb{Z}}\mathcal{R}_{l,2\tilde{l}}\geq\mathcal{R}.$ Here and in what follows,
\begin{equation}\label{eq3.81}
\omega_{j,2\varepsilon} =\frac{\pi}{\beta_{j,2\varepsilon}-\alpha_{j,2\varepsilon}}=\frac{n\pi}{\pi-4n\varepsilon}
 \end{equation}
  for the integer $j$ satisfying  $0\leq j\leq 2n-1,$ while $N_{0}(t)$ is defined as $N_{0}(t)=\int_{1}^{t}\frac{n\left(u,\Omega\left(\alpha_{j,2\varepsilon},\beta_{j,2\varepsilon}\right),\frac{1}{f-a_l}\right)}{u}du$ with  $n\left(u,\Omega\left(\alpha_{j,2\varepsilon},\beta_{j,2\varepsilon}\right),\frac{1}{f-a_l}\right)$ being the number of zeros of $f-a_l$ for $l\in\{1,2,3\}$ in $\Omega\left(\alpha_{j,2\varepsilon},\beta_{j,2\varepsilon}\right)\cap\{z:1<|z|\leq  u\}$ counted with multiplicities. From \eqref{eq3.79} and \eqref{eq3.80} we have
\begin{equation}\label{eq3.82}
 2\omega_{j,2\varepsilon}  \sin\left(2\varepsilon\omega_{j,2\varepsilon}\right)\frac{N_{0}(r)}{r^{\omega_{j,2\varepsilon}}}\leq O(1)
  \end{equation}
 for $l\in\{1,2,3\}$ and the positive number $r$ such that
 \begin{equation}\nonumber
 r>\max\limits_{l\in\{1,2,3\}}\max\limits_{0\leq \tilde{l}\leq n-1, \, \tilde{l}\in\Bbb{Z}}\mathcal{R}_{l,2\tilde{l}}\geq\mathcal{R}.
 \end{equation}
 Now we use \eqref{eq3.74} and \eqref{eq3.75} to deduce
 \begin{equation}\nonumber
 |z_{2,l,m}|>\max\limits_{l\in\{1,2,3\}}\max\limits_{0\leq \tilde{l}\leq n-1, \, \tilde{l}\in\Bbb{Z}}\mathcal{R}_{l,2\tilde{l}}\geq \mathcal{R}
 \end{equation}
  for the positive integer $m$ satisfying $m>\max\limits_{l\in\{1,2,3\}}\max\limits_{0\leq \tilde{l}\leq n-1, \, \tilde{l}\in\Bbb{Z}}N_{l,2\tilde{l}}.$ Combining this with \eqref{eq3.72}, we deduce that for any given positive number $r$ satisfying $r>\max\limits_{l\in\{1,2,3\}}\max\limits_{0\leq \tilde{l}\leq n-1, \, \tilde{l}\in\Bbb{Z}}\geq \mathcal{R},$ there exists some positive integer $N_{2,l,m}(r)$  with $l\in\{1,2,3\}$ that satisfies
    \begin{equation}\label{eq3.83}
        N_{2,l,m}(r)>\max\limits_{l\in\{1,2,3\}}\max\limits_{0\leq \tilde{l}\leq n-1, \, \tilde{l}\in\Bbb{Z}}N_{l,2\tilde{l}},
        \end{equation}
    such that for the positive integer $m$ satisfying $m>N_{2,l,m}(r)$  with $l\in\{1,2,3\},$ we have
    \begin{equation}\label{eq3.84}
       |z_{2,l,m}| \geq r>\max\limits_{l\in\{1,2,3\}}\max\limits_{0\leq \tilde{l}\leq n-1, \, \tilde{l}\in\Bbb{Z}}\mathcal{R}_{l,2\tilde{l}}\geq \mathcal{R}.
  \end{equation}
   From \eqref{eq3.84} we have
\begin{equation}\label{eq3.85}
\begin{aligned}
 &\quad N_{0}(2|z_{2,l,m}|)=\int_{1}^{2|z_{2,l,m}|}\frac{n\left(t,{\Omega}\left(\alpha_{j,2\varepsilon},\beta_{j,2\varepsilon}\right),\frac{1}{f-a_l}\right)}{t}dt\\
 &\geq \int_{|z_{2,l,m}|}^{2|z_{2,l,m}|}\frac{n\left(t,{\Omega}
\left(\alpha_{j,2\varepsilon},\beta_{j,2\varepsilon}\right),\frac{1}{f-a_l}\right)}{t}dt\\
&\geq n\left(|z_{2,l,m}|,{\Omega}\left(\alpha_{j,2\varepsilon},\beta_{j,2\varepsilon}\right),\frac{1}{f-a_l}\right)\int_{|z_{2,l,m}|}^{2|z_{2,l,m}|}\frac{1}{t}dt\\
&=n\left(|z_{2,l,m}|,{\Omega}\left(\alpha_{j,2\varepsilon},\beta_{j,2\varepsilon}\right),\frac{1}{f-a_l}\right)\log 2\geq n(z_{2,l,m})\log 2\\
 &\geq e^{|p_n|(1-2\varepsilon)|z_{2,l,m}|^n\sin(n\varepsilon)} \log 2
\end{aligned}
\end{equation}
for the positive integer $m$ satisfying $m>N_{2,l,m}(r)$ with $l\in\{1,2,3\}.$ From \eqref{eq3.82}-\eqref{eq3.85} we have
\begin{equation}\label{eq3.86}
\begin{aligned}
&\quad  2\omega_{j,2\varepsilon}
\sin\left(2\varepsilon\omega_{j,2\varepsilon}\right)\frac{e^{|p_n|(1-2\varepsilon)|z_{2,l,m}|^n\sin(n\varepsilon)} \log 2}{(2|z_{2,l,m}|)^{\omega_{j,2\varepsilon}}} \\
&\leq 2\omega_{j,2\varepsilon} \sin\left(2\varepsilon\omega_{j,2\varepsilon}\right)\frac{N_{0}(2|z_{2,l,m}|)}{(2|z_{2,l,m}|)^{\omega_{j,2\varepsilon}}}\leq O(1)
 \end{aligned}
  \end{equation}
 for the positive integer $m$ satisfying $m>N_{2,l,m}(r)$ with $l\in\{1,2,3\}.$ From \eqref{eq3.72}, \eqref{eq3.86} and \eqref{eq3.81} with the even integer $j$ satisfying $0\leq j\leq 2n-1,$ we have
\begin{equation}\label{eq3.87}
O(1)\geq  2\omega_{j,2\varepsilon}\sin\left(2\varepsilon\omega_{j,2\varepsilon}\right)\frac{e^{|p_n|(1-2\varepsilon)|z_{2,l,m}|^n\sin(n\varepsilon)} \log 2}{(2|z_{2,l,m}|)^{\omega_{j,2\varepsilon}}}\rightarrow+\infty, \, \, \text{as} \, \, m\rightarrow\infty
  \end{equation}
 for $l\in\{1,2,3\},$ the even integer $j$ satisfying $0\leq j\leq 2n-1,$ and the given positive number $\varepsilon$ satisfying $0<\varepsilon<\frac{\pi}{8n}.$ This is a contradiction.
\vskip 2mm
\par {\bf Subcase 2.2.} Suppose that $j$ with $0\leq j \leq 2n-1$ and $j\in\Bbb{Z}$ is an odd integer. Then, by \eqref{eq3.12}, \eqref{eq3.67}, \eqref{eq3.72}, \eqref{eq3.73} and \eqref{eq3.14} we deduce that there exist some positive integer $N_{l,j}$ and a large positive number $\mathcal{R}_{l,j}$ satisfying $\mathcal{R}_{l,j}\geq \mathcal{R}$ for $l\in\{1,2,3\}$ and the odd integer $j$ satisfying $0\leq j\leq 2n-1,$ such that \eqref{eq3.74} holds and each point $z_{2,l,m}$ satisfying \eqref{eq3.74} is not a common multiple zero of $f-a_l$ and  $g-a_l$ for $l\in\{1,2,3\},$ and each point $z_{2,l,m}$ satisfying \eqref{eq3.74} satisfies
\begin{equation}\label{eq3.88}
\tilde{n}(z_{2,l,m})=\frac{e^{-\text{Re}(P(z_{2,l,m}))}}{|R(z_{2,l,m})|}\geq e^{|p_n|(1-2\varepsilon)|z_{2,l,m}|^n\sin(n\varepsilon)}\geq 4,
\end{equation}
as $ m\geq N_{l,j}$ for $l\in\{1,2,3\}$ and the odd integer $j$ satisfying $0\leq j\leq 2n-1.$ Here and in what follows, $\tilde{n}(z_{2,l,m})$ denotes the multiplicity of $z_{2,l,m}$ of a zero of $g-a_l$ for $l\in\{1,2,3\},$ the positive integer $m$ satisfying $m\geq N_{l,j},$ and the odd integer $j$ satisfying $0\leq j\leq 2n-1.$ From \eqref{eq3.12}, \eqref{eq3.14}, \eqref{eq3.88} and the assumption that $f$ and $g$ share $0,$ $1$ and $c$ IM, we deduce that $z_{2,l,m}$ is also a simple zero of $f-a_l$ for $l\in\{1,2,3\},$ the positive integer $m$ satisfying $m\geq N_{l,j},$ and the odd integer $j$ satisfying $0\leq j\leq 2n-1.$ Next we use Lemma \ref{lemma2.10} for $\overline{\Omega}\left(\alpha_{j,2\varepsilon},\beta_{j,2\varepsilon}\right)$ to deduce
\begin{equation}\label{eq3.89}
\begin{aligned}
S_{\alpha_{j,2\varepsilon}, \beta_{j,2\varepsilon}}(r,g)&\leq\overline{C}_{\alpha_{j,2\varepsilon}, \beta_{j,2\varepsilon}}\left(r,\frac{1}{g-a_1}\right)+\overline{C}_{\alpha_{j,2\varepsilon}, \beta_{j,2\varepsilon}}\left(r,\frac{1}{g-a_2}\right)\\
&\quad +\overline{C}_{\alpha_{j,2\varepsilon}, \beta_{j,2\varepsilon}}\left(r,\frac{1}{g-a_3}\right)+R_{\alpha_{j,2\varepsilon}\beta_{j,2\varepsilon}}(r,g)
\end{aligned}
\end{equation}
for $r>0$ and the odd integer $j$ satisfying $0\leq j\leq 2n-1,$  From \eqref{eq3.74}, \eqref{eq3.88} and Lemma \ref{lemma2.9} we have
\begin{equation}\label{eq3.90}
\begin{aligned}
&\quad \overline{C}_{\alpha_{j,2\varepsilon}, \beta_{j,2\varepsilon}}\left(r,\frac{1}{g-a_l}\right)\\
&=\left(\overline{C}_{\alpha_{j,2\varepsilon}, \beta_{j,2\varepsilon}}\left(r,\frac{1}{g-a_l}\right)-\overline{C}_{\alpha_{j,2\varepsilon}, \beta_{j,2\varepsilon}}\left(\mathcal{R}_{l,j},\frac{1}{g-a_l}\right)\right) \\
&\quad +\overline{C}_{\alpha_{j,2\varepsilon}, \beta_{j,2\varepsilon}}\left(\mathcal{R}_{l,j},\frac{1}{g-a_l}\right)\leq \frac{1}{4}C_{\alpha_{j,2\varepsilon}, \beta_{j,2\varepsilon}}\left(r,\frac{1}{g-a_l}\right)+O(1)\\
&\leq \frac{1}{4}S_{\alpha_{j,2\varepsilon}, \beta_{j,2\varepsilon}}\left(r,\frac{1}{g-a_l}\right)+O(1)=\frac{1}{4}S_{\alpha_{j,2\varepsilon} \beta_{j,2\varepsilon}}\left(r,g\right)+O(1)
\end{aligned}
\end{equation}
for the $l\in\{1,2,3\},$ the odd integer $j$ satisfying $0\leq j\leq 2n-1,$ and the positive number $r$ such that $r>\max\limits_{l\in\{1,2,3\}}\max\limits_{0\leq \tilde{l}\leq n-1, \, \tilde{l}\in\Bbb{Z}}\mathcal{R}_{l,2\tilde{l}}\geq\mathcal{R}.$
From \eqref{eq3.2} and Lemma \ref{Lemma2.12} we have
\begin{equation}\label{eq3.91}
R_{\alpha_{j,2\varepsilon}, \beta_{j,2\varepsilon}}(r,g)=O(1)
\end{equation}
for the odd integer $j$ satisfying $0\leq j\leq 2n-1,$ and the positive number $r$ such that $r>\max\limits_{l\in\{1,2,3\}}\max\limits_{0\leq \tilde{l}\leq n-1, \, \tilde{l}\in\Bbb{Z}}\mathcal{R}_{l,2\tilde{l}}\geq \mathcal{R}.$
From \eqref{eq3.89}-\eqref{eq3.91} we deduce
\begin{equation}\label{eq3.92}
S_{\alpha_{j,2\varepsilon}, \beta_{j,2\varepsilon}}(r,g)=O(1)
\end{equation}
for the odd integer $j$ satisfying $0\leq j\leq 2n-1,$ and the positive number $r$ such that $r>\max\limits_{l\in\{1,2,3\}}\max\limits_{0\leq \tilde{l}\leq n-1, \, \tilde{l}\in\Bbb{Z}}\mathcal{R}_{l,2\tilde{l}}\geq\mathcal{R}.$ Next we use Lemma \ref{lemma2.9} and the reasonings in the proof of
Zheng \cite[pp.53-54, Lemma 2.2.2]{Zheng2009} to have
\begin{equation}\label{eq3.93}
\begin{aligned}
&\quad S_{\alpha_{j,2\varepsilon}, \beta_{j,2\varepsilon}}\left(r,g\right)+O(1)=S_{\alpha_{j,2\varepsilon}, \beta_{j,2\varepsilon}}\left(r,\frac{1}{g-a_l}\right)\geq C_{\alpha_{j,2\varepsilon}, \beta_{j,2\varepsilon}}\left(r,\frac{1}{g-a_l}\right)\\
&\geq 2\sin\left(2\varepsilon\omega_{\alpha_{j,2\varepsilon}}\right)\sum_{|z_{2,l,m}|\in \Omega\left(\alpha_{j,2\varepsilon}, \beta_{j,2\varepsilon}\right)}\left(\frac{1}{|z_{2,l,m}|^{\omega_{j,2\varepsilon}}}-\frac{|z_{2,l,m}|^{\omega_{j,2\varepsilon}}}{r^{2\omega_{j,2\varepsilon}}}\right)+O(1)\\
&\geq 2\sin\left(2\varepsilon\omega_{j,2\varepsilon}\right)\sum_{|z_{2,l,m}|>\mathcal{R}_{l,j}}\left(\frac{1}{|z_{2,l,m}|^{\omega_{j,2\varepsilon}}}
-\frac{|z_{2,l,m}|^{\omega_{j,2\varepsilon}}}{r^{2\omega_{j,2\varepsilon}}}\right)+O(1)\\
&=2\sin\left(2\varepsilon\omega_{j,2\varepsilon}\right)\int_{1}^{r}\left(\frac{1}{t^{\omega_{j,2\varepsilon}}}-\frac{t^{\omega_{j,2\varepsilon}}}
{r^{2\omega_{j,2\varepsilon}}}\right)d\tilde{n}_{0}(t)+O(1)\\
&=2\omega_{j,2\varepsilon}\sin\left(2\varepsilon\omega_{j,2\varepsilon}\right)\int_{1}^{r}\tilde{n}_{0}(t)\left(\frac{1}{t^{1+\omega_{j,2\varepsilon}}}
+\frac{t^{\omega_{j,2\varepsilon}}-1}{r^{2\omega_{j,2\varepsilon}}}\right)dt+O(1)\\
&\geq 2\omega_{j,2\varepsilon} \sin\left(2\varepsilon\omega_{j,2\varepsilon}\right)\int_{1}^{r} \frac{1}{t^{\omega_{j,2\varepsilon}}}d\tilde{N}_{0}(t)+O(1)\\
&=2\omega_{j,2\varepsilon} \sin\left(2\varepsilon\omega_{j,2\varepsilon}\right)\frac{\tilde{N}_{0}(r)}{r^{\omega_{j,2\varepsilon}}}+2\omega^2_{j,2\varepsilon} \sin\left(2\varepsilon\omega_{j,2\varepsilon}\right)\int_{1}^{r} \frac{\tilde{N}_{0}(t)}{t^{1+\omega_{j,2\varepsilon}}}dt+O(1)\\
&\geq 2\omega_{j,2\varepsilon}  \sin\left(2\varepsilon\omega_{j,2\varepsilon}\right)\frac{\tilde{N}_{0}(r)}{r^{\omega_{j,2\varepsilon}}}+O(1)
\end{aligned}
\end{equation}
for $l\in\{1,2,3\},$ the odd integer $j$ satisfying $0\leq j\leq 2n-1,$ and the positive number $r$ such that $r>\max\limits_{l\in\{1,2,3\}}\max\limits_{0\leq \tilde{l}\leq n-1, \, \tilde{l}\in\Bbb{Z}}\mathcal{R}_{l,2\tilde{l}}\geq\mathcal{R}.$ Here and in what follows, $\tilde{N}_{0}(t)$ is defined as $\tilde{N}_{0}(t)=\int_{1}^{t}\frac{n\left(u,\Omega\left(\alpha_{j,2\varepsilon},\beta_{j,2\varepsilon}\right),\frac{1}{g-a_l}\right)}{u}du$ with  $n\left(u,\Omega\left(\alpha_{j,2\varepsilon},\beta_{j,2\varepsilon}\right),\frac{1}{g-a_l}\right)$ being the number of zeros of $g-a_l$ for $l\in\{1,2,3\}$ in $\Omega\left(\alpha_{j,2\varepsilon},\beta_{j,2\varepsilon}\right)\cap\{z:1<|z|\leq  u\}$ counted with multiplicities. From \eqref{eq3.92} and \eqref{eq3.93} we have
\begin{equation}\label{eq3.94}
 2\omega_{j,2\varepsilon}  \sin\left(2\varepsilon\omega_{j,2\varepsilon}\right)\frac{\tilde{N}_{0}(r)}{r^{\omega_{j,2\varepsilon}}}\leq O(1)
  \end{equation}
 for the  odd integer $j$ satisfying $0\leq j\leq 2n-1,$ and the positive number $r$ such that $r>\max\limits_{l\in\{1,2,3\}}\max\limits_{0\leq \tilde{l}\leq n-1, \, \tilde{l}\in\Bbb{Z}}\mathcal{R}_{l,2\tilde{l}}\geq\mathcal{R}.$ Now we use \eqref{eq3.74} and \eqref{eq3.88} to deduce $|z_{2,l,m}|>\max\limits_{l\in\{1,2,3\}}\max\limits_{0\leq \tilde{l}\leq n-1, \, \tilde{l}\in\Bbb{Z}}\geq \mathcal{R}$ for the positive integer $m$ satisfying $m>\max\limits_{l\in\{1,2,3\}}\max\limits_{0\leq \tilde{l}\leq n-1, \, \tilde{l}\in\Bbb{Z}}N_{l,2\tilde{l}}.$ Combining this with \eqref{eq3.72}, we deduce that for the positive number $r$ satisfying $r>\max\limits_{l\in\{1,2,3\}}\max\limits_{0\leq \tilde{l}\leq n-1, \, \tilde{l}\in\Bbb{Z}}\geq \mathcal{R},$ there exists some large positive integer $N_{2,l,m}(r)$ with $l\in\{1,2,3\}$ that satisfies \eqref{eq3.83}, such that for the positive integer $m$ satisfying $m>N_{2,l,m}(r)$ with $l\in\{1,2,3\}$ we have \eqref{eq3.84}. From \eqref{eq3.84} we have
\begin{equation}\label{eq3.95}
\begin{aligned}
 \tilde{N}_{0}(2|z_{2,l,m}|)&=\int_{1}^{2|z_{2,l,m}|}\frac{n\left(t,{\Omega}\left(\alpha_{j,2\varepsilon},\beta_{j,2\varepsilon}\right),\frac{1}{g-a_l}\right)}{t}dt\\
\end{aligned}
\end{equation}
\begin{equation*}
\begin{aligned}
 &\geq \int_{|z_{2,l,m}|}^{2|z_{2,l,m}|}\frac{n\left(t,{\Omega}
\left(\alpha_{j,2\varepsilon},\beta_{j,2\varepsilon}\right),\frac{1}{g-a_l}\right)}{t}dt\\
&\geq n\left(|z_{2,l,m}|,{\Omega}\left(\alpha_{j,2\varepsilon},\beta_{j,2\varepsilon}\right),\frac{1}{g-a_l}\right)\int_{|z_{2,l,m}|}^{2|z_{2,l,m}|}\frac{1}{t}dt\\
&=n\left(|z_{2,l,m}|,{\Omega}\left(\alpha_{j,2\varepsilon},\beta_{j,2\varepsilon}\right),\frac{1}{g-a_l}\right)\log 2\geq \tilde{n}(z_{2,l,m})\log 2\\
 &\geq e^{|p_n|(1-2\varepsilon)|z_{2,l,m}|^n\sin(n\varepsilon)} \log 2
\end{aligned}
\end{equation*}
for $l\in\{1,2,3\}$ and the positive integer $m$ satisfying $m>N_{2,l,m}(r).$ From \eqref{eq3.83}, \eqref{eq3.84}, \eqref{eq3.94}, \eqref{eq3.95} we have
\begin{equation}\label{eq3.96}
\begin{aligned}
&\quad  2\omega_{j,2\varepsilon}
\sin\left(2\varepsilon\omega_{j,2\varepsilon}\right)\frac{e^{|p_n|(1-2\varepsilon)|z_{2,l,m}|^n\sin(n\varepsilon)} \log 2}{(2|z_{2,l,m}|)^{\omega_{j,2\varepsilon}}} \\
&\leq 2\omega_{j,2\varepsilon} \sin\left(2\varepsilon\omega_{j,2\varepsilon}\right)\frac{\tilde{N}_{0}(2|z_{2,l,m}|)}{(2|z_{2,l,m}|)^{\omega_{j,2\varepsilon}}}\leq O(1)
 \end{aligned}
  \end{equation}
 for the odd integer $j$ satisfying $0\leq j\leq 2n-1,$ and the positive integer $m$ satisfying $m>N_{2,l,m}(r)$ with $l\in\{1,2,3\}.$ From \eqref{eq3.72}, \eqref{eq3.96} and \eqref{eq3.81} with the odd integer $j$ satisfying $0\leq j\leq 2n-1,$ we deduce \eqref{eq3.87} for $l\in\{1,2,3\},$ the odd integer $j$ satisfying $0\leq j\leq 2n-1,$ and the given positive number $\varepsilon$ satisfying $0<\varepsilon<\frac{\pi}{8n}.$ From \eqref{eq3.87} we get a contradiction. This proves Claim 2I.
\vskip 2mm
\par Next we use Lemma \ref{Lemma2.16}, Lemma \ref{Lemma2.18} and Claim 2I to complete the proof of Theorem \ref{Theorem1.1}. For this purpose, we have a discussion as follows: suppose that $\limsup\limits_{r\rightarrow\infty}N(r,f)/T(r,f)>\frac{4}{5}.$ Then, it follows from Lemma 2.2 in \cite{Ishizaki1997} that there exist some real constant $\lambda >4/5$ and some set $I \subset(0, +\infty)$ that has infinite linear measure such that $N\left(r,f\right)/T(r,f)\geq \lambda$ for all $r \in I.$ Combining this with Theorem \ref{TheoremE}, we see that $f$ and $g$ share all four values $a_l,$ $a_2,$ $a_3,$
$a_4$ CM, and so  Theorem \ref{Theorem1.1} is valid. Next we suppose that $\limsup\limits_{r\rightarrow\infty}\frac{N(r,f)}{T(r,f)}\leq \frac{4}{5}.$ Then, it follows from the known equality $T(r,f)=m(r,f)+N(r,f)$ that
\begin{equation}\label{eq3.97}
\delta=:\delta(\infty,f)=\liminf\limits_{r\rightarrow\infty}\frac{m(r,f)}{T(r,f)}\geq \frac{1}{5}.
\end{equation}
Next we define the following directional arc:
\begin{equation}\label{eq3.98}
\gamma_j=\left\{z\in\Bbb{C}: z=re^{i\theta} \, \, \text{with} \, \, \theta\in  [\alpha_j, \beta_j] \, \, \text{from} \, \,  \alpha_j \, \, \text {to} \, \,  \beta_j\right\}
\end{equation}
with
\begin{equation}\label{eq3.99}
\alpha_j=-\frac{\theta_{n}}{n}+(2j-1)\frac{\pi}{2n} \, \, \text{and} \, \, \beta_j=-\frac{\theta_{n}}{n}+(2j+1)\frac{\pi}{2n}
\end{equation}
for each integer $j$ satisfying $0 \leq j\leq 2n-1.$ From \eqref{eq3.98} and \eqref{eq3.99} we have
\begin{equation}\label{eq3.100}
\begin{aligned}
m(r,f)&=\frac{1}{2\pi}\int_0^{2\pi}\log^{+}|f(re^{i\theta})|d\theta
=\frac{1}{2\pi}\sum\limits_{j=0}^{2n-1}\int_{\alpha_j}^{\beta_j}\log^{+}|f(re^{i\theta})|d\theta \quad \text{for} \quad  r>0.
\end{aligned}
\end{equation}
On the other hand, from \eqref{eq3.2}, \eqref{eq3.97} and Lemma \ref{Lemma2.15} we have
\begin{equation}\label{eq3.101}
\liminf\limits_{r_n\rightarrow\infty}\text{mes}D(r_n, \infty)\geq \min\left\{2\pi,\frac{4}{\rho}\arcsin\sqrt{\frac{\delta}{2}}\right\}.
\end{equation}
From \eqref{eq3.97}-\eqref{eq3.101} we see that there exists some integer $j$ satisfying $0\leq j\leq 2n-1,$ say $j=0,$ such that
\begin{equation}\label{eq3.102}
\text{mes}(D(r_n, \infty)\cap [\alpha_0,\beta_0])\geq \sigma_0
\end{equation}
with
\begin{equation}\label{eq3.103}
D(r_n, \infty)=\left\{\theta\in [-\pi,\pi): \log^{+}|f(r_ne^{i\theta})|>\frac{T(r_n,f)}{\log r_n}\right\}
\end{equation}
for arbitrary P\'{o}1ya
peaks $\{r_n\} \subset (0, +\infty)\setminus F $ of order $\sigma=\mu=\rho\geq 1,$ as $r_n\rightarrow\infty.$ Here and in what follows, $\sigma_0$ is some positive constant, and $F\subset (0, +\infty)$ is some set of finite logarithmic measure. Then, it follows from \eqref{eq3.102} and \eqref{eq3.103} that
\begin{equation}\label{eq3.104}
\begin{split}
&\quad \frac{1}{2\pi}\int_{\alpha_0}^{\beta_0}\log^{+}|f(r_ne^{i\theta})|d\theta
\geq \frac{1}{2\pi}\int_{[\alpha_0,\beta_0]\cap D(r_n, \infty) }\log^{+}|f(r_ne^{i\theta})|d\theta\\
&\geq \frac{\sigma_0T(r_n,f)}{2\pi\log r_n} \, \, \text{for} \, \,  \{r_n\} \subset (0, +\infty)\setminus F \, \, \text{and} \, \,  r_n\geq \mathcal{R}_0,
\end{split}
\end{equation}
where and in what follows, $\mathcal{R}_0$ is a large positive number satisfying
 \begin{equation}\nonumber
 \mathcal{R}_0\geq \max\limits_{l\in\{1,2,3\}}\max\limits_{0\leq \tilde{l}\leq n-1, \, \tilde{l}\in\Bbb{Z}}\mathcal{R}_{l,2\tilde{l}}.
 \end{equation}
Next we set the simply connected domain
\begin{equation}\label{eq3.105}
D_{0,2\varepsilon}=\{z\in\Bbb{C}:\alpha_0+2\varepsilon<\arg z<\beta_0-2\varepsilon \, \, \text{with} \, \, 0<|z|<r\}
\end{equation}
 with the positive boundary $\Gamma_{0,2\varepsilon}=:L_{0,2\varepsilon}+\gamma_{0,2\varepsilon}+L^{-}_{1,2\varepsilon},$ where $L_{0,2\varepsilon}$ and $L_{1,2\varepsilon}$ are two oriented segments such that
\begin{equation}\label{eq3.106}
L_{0,2\varepsilon}: z=te^{ i\left(-\frac{\theta_n}{n}-\frac{\pi}{2n}+2\varepsilon\right)} \, \, \text{with} \, \, t\in[0,r] \, \, \text{from} \, \, t=0 \, \, \text{to} \, \,  t=r
\end{equation}
and
\begin{equation}\label{eq3.107}
L_{1,2\varepsilon}:  z=te^{ i\left(-\frac{\theta_n}{n}+\frac{\pi}{2n}-2\varepsilon\right)}  \, \, \text{with} \, \, t\in[0,r] \, \, \text{from} \, \,  t=0 \, \, \text{to} \, \,  t=r.
\end{equation}
respectively, and $\gamma_{0,2\varepsilon}$ is a directional arc such that
\begin{equation}\label{eq3.108}
\gamma_{0,2\varepsilon}: z=re^{i\theta} \, \, \text{with} \, \, \theta\in  [\alpha_{0,2\varepsilon}, \beta_{0,2\varepsilon}] \, \, \text{from}
 \, \,  \alpha_{0,2\varepsilon} \, \, \text {to} \, \,  \beta_{0,2\varepsilon},
\end{equation}
where and in what follows, $\alpha_{0,2\varepsilon}$ and $\beta_{0,2\varepsilon}$ are defined as \eqref{eq3.71} for $j=0.$ Next we also define $G_{D_{0,2\varepsilon}}(z,a_{0,r})$ with $z\in D_{0,2\varepsilon}$ as
\begin{equation}\label{eq3.109}
 G_{D_{0,2\varepsilon}}(z,a_{0,r})=\log\frac{1}{|z-a_{0,r}|}+\omega_{D_{0,2\varepsilon}}(z,a_{0,r})
 \end {equation}
for $z\in D_{0,2\varepsilon}.$ Here and in what follows, $\varepsilon$ is a small positive number satisfying $0<\varepsilon<\frac{\pi}{8n},$ and
\begin{equation}\label{eq3.110}
a_{0,r}\in \left\{z\in D_{0,2\varepsilon}: |z|=\frac{r}{2} \, \, \text{and} \, \, \alpha_{0,2\varepsilon}+\varepsilon\leq \arg z\leq \beta_{0,2\varepsilon}-\varepsilon\right\}
\end{equation}
 is a complex number such that $f(a_{0,r})\not\in\{a_1,a_2,a_3,\infty\}.$ Then, from \eqref{eq3.105}-\eqref{eq3.110}, Remark \ref{Remark2.5X2}.3, the positive boundary $\Gamma_{0,2\varepsilon}=L_{0,2\varepsilon}+\gamma_{0,2\varepsilon}+L^{-}_{1,2\varepsilon}$ of the region $D_{0,2\varepsilon}$ and the definition of the proximity function $m(D_{0,2\varepsilon},a_{0,r},f)$ of $f$ with the center at $a_{0,r}$ for $D_{0,2\varepsilon}$ we have
\begin{equation}\label{eq3.111}
\begin{aligned}
&\quad m(D_{0,2\varepsilon},a_{0,r},f)=\frac{1}{2\pi}\int_{\Gamma_{0,2\varepsilon}}\log^{+}|f(\zeta)|\frac{\partial G_{D_{0,2\varepsilon}}(\zeta,a_{0,r})}{\partial \mathfrak{n}}ds\\
&=\frac{1}{2\pi}\int_{L^{-1}_{1,2\varepsilon}+L_{0,2\varepsilon}+\gamma_{0,2\varepsilon}}\log^{+}|f(\zeta)|\frac{\partial G_{D_{0,2\varepsilon}}(\zeta,a_{0,r})}{\partial \mathfrak{n}}ds\\
&\geq \frac{1}{2\pi}\int_{\gamma_{0,2\varepsilon}}\log^{+}|f(\zeta)|\frac{\partial G_{D_{0,2\varepsilon}}(\zeta,a_{0,r})}{\partial \mathfrak{n}}ds\\
\end{aligned}
\end{equation}
for the small positive number $\varepsilon$ satisfying $0<\varepsilon<\frac{\pi}{8n}.$ Next we also set the following transformation
\begin{equation}\label{eq3.112}
z=e^{i\alpha_{0,2\varepsilon}}{\tilde{z}}^{\frac{\beta_{0,2\varepsilon}-\alpha_{0,2\varepsilon}}{\pi}} \, \, \text{with} \, \, \tilde{z}\in \tilde{D}_0,
\end{equation}
where and in what follows, $\tilde{D}_0$ is a simply connected domain such that
\begin{equation}\label{eq3.113}
\tilde{D}_0=:\{\tilde{z}\in \Bbb{C}: 0<|\tilde{z}|<\tilde{R} \, \, \text{and} \, \, 0<\arg \tilde{z}<\pi\},
\end{equation}
where $\tilde{R}$ is a positive number satisfying
\begin{equation}\label{eq3.114}
\tilde{R}=r^{\frac{\pi}{\beta_{0,2\varepsilon}-\alpha_{0,2\varepsilon}}},
\end{equation}
and the positive boundary of the simply connected domain $\tilde{D}_0$ is $\tilde{\Gamma}_0=\tilde{L}_0+\tilde{\gamma}_0,$
where and in what follows, $\tilde{L}_0$ is an oriented segment, and $\tilde{\gamma}_0$ is a directional arc. They are defined as follows: the oriented segment $\tilde{L}_0$ is such an oriented segment that for each $\tilde{\zeta}\in\tilde{L}_0$ we have $\tilde{\zeta}=t$ with $t\in [-\tilde{R},\tilde{R}],$ and the starting point of $\tilde{L}_0$ is $\tilde{\zeta}=-\tilde{R},$ while the end point of $\tilde{L}_0$ is $\tilde{\zeta}=\tilde{R}.$
 The directional arc $\tilde{\gamma}_0$ is such a directional arc that for each $\tilde{\zeta}\in\tilde{\gamma}_0$ we have
$\tilde{\zeta}=\tilde{R}e^{i\tilde{\theta}}$ with $\tilde{\theta}\in [0,2\pi],$ and the starting point of $\tilde{\gamma}_0$ is $\tilde{\zeta}=\tilde{R}$ with $\arg \tilde{\zeta}=0,$ while the end point of $\tilde{\gamma}_0$ is $\tilde{\zeta}=\tilde{R}$ with $\arg \tilde{\zeta}=2\pi.$ In addition, from \eqref{eq3.71} with $j=0$ we have
\begin{equation}\label{eq3.115}
0<\frac{\beta_{0,2\varepsilon}-\alpha_{0,2\varepsilon}}{\pi}<\frac{1}{n}.
\end{equation}
\vskip 2mm
\par From \eqref{eq3.112} and \eqref{eq3.115} we can easily verify that the transformation of \eqref{eq3.112} maps the simply connected domain $\tilde{D}_0$ of \eqref{eq3.113} onto the simply connected domain $D_{0,2\varepsilon}$ of \eqref{eq3.105} in a one-to-one mapping manner, and maps the positive boundary $\tilde{\Gamma}_0=\tilde{L}_0+\tilde{\gamma}_0$ of $\tilde{D}_0$ into the positive boundary $\Gamma_{0,2\varepsilon}=L^{-}_{1,2\varepsilon}+L_{0,2\varepsilon}+\gamma_{0,2\varepsilon}$ of $D_{0,2\varepsilon}$ in a one-to-one mapping manner,
where the oriented segment $\tilde{L}_0$ and the directional arc $\tilde{\gamma}_0$ are mapped into $L^{-1}_{1,2\varepsilon}+L_{0,2\varepsilon}$ and $\gamma_{0,2\varepsilon}$ respectively by the transformation of \eqref{eq3.112} in a one-to-one mapping manner. Moreover, from \eqref{eq3.113}-\eqref{eq3.115}, the polar form of the Cauchy-Riemann partial differential equations and the polar form of the  sufficient conditions for an analytic function (cf.\cite{Silverman2000}), we can verify that the transformation of \eqref{eq3.112} is also an analytic transformation in the simply connected domain $\tilde{D}_0$ of \eqref{eq3.113}. Combining this with the known result that the transformation of \eqref{eq3.112} maps the simply connected domain $\tilde{D}_0$ of \eqref{eq3.113} onto the simply connected domain $D_{0,2\varepsilon}$ of \eqref{eq3.105} in a one-to-one mapping manner, we see that the univalent transformation \eqref{eq3.112} on the simply connected domain $\tilde{D}_0$ is a univalent and analytic map on the simply connected domain $\tilde{D}_0,$ and maps conformally the simply connected domain $\tilde{D}_0$ of \eqref{eq3.113} with the positive boundary $\tilde{\Gamma}_0=\tilde{L}_0+\tilde{\gamma}_0$ onto the simply connected domain $D_{0,2\varepsilon}$ of \eqref{eq3.105} with the positive boundary $\Gamma_{0,2\varepsilon}=L^{-}_{1,2\varepsilon}+L_{0,2\varepsilon}+\gamma_{0,2\varepsilon}.$ For convinience of the following discussion, we denote
 the univalent and analytic transformation \eqref{eq3.112} as
 \begin{equation}\label{eq3.116}
z=\tilde{\phi}_{\tilde{a}_{0,\tilde{r}}}(\tilde{z})=:e^{i\alpha_{0,2\varepsilon}}{\tilde{z}}^{\frac{\beta_{0,2\varepsilon}-\alpha_{0,2\varepsilon}}{\pi}}
 \, \, \text{with} \, \, \tilde{z} \in \tilde{D}_0,
\end{equation}
and there exists a unique point $\tilde{a}_{0,\tilde{r}}\in\tilde{D}_0$ satisfying
\begin{equation}\label{eq3.117}
a_{0,r}=\tilde{\phi}_{\tilde{a}_{0,\tilde{r}}}(\tilde{a}_{0,\tilde{r}}),
\end{equation}
such that the univalent and analytic mapping \eqref{eq3.116} maps conformally the simply connected domain $\tilde{D}_0$ of \eqref{eq3.113} with the positive boundary $\tilde{\Gamma}_0=\tilde{L}_0+\tilde{\gamma}_0$  onto the simply connected domain $D_{0,2\varepsilon}$ of \eqref{eq3.105} with the positive boundary $\Gamma_{0,2\varepsilon}=L^{-}_{1,2\varepsilon}+L_{0,2\varepsilon}+\gamma_{0,2\varepsilon}.$
On the other hand, from the Riemann mapping theorem (cf.\cite[p.230,Theorem 1]{Ahlfors1979}) we also see that there exists a unique univalent and analytic function on the simply connected domain $D_{0,2\varepsilon}$ of \eqref{eq3.105}, say
\begin{equation} \label{eq3.118}
w=\phi_{a_{0,r}} (z)\, \, \text {for} \, \, z\in D_{0,2\varepsilon}  \, \, \text{with} \, \,  \phi_{a_{0,r}}(a_{0,r})=0,
\end{equation}
such that the univalent and analytic transformation \eqref{eq3.118} maps conformally the simply connected domain  $D_{0,2\varepsilon}$ of \eqref{eq3.105} with the positive boundary $\Gamma_{0,2\varepsilon}=L^{-}_{1,2\varepsilon}+L_{0,2\varepsilon}+\gamma_{0,2\varepsilon}$ onto the open unit disk
$\Bbb{D}=\{w\in\Bbb{C}:|w|<1\}$ with the positive boundary $\partial\Bbb{D}: w=e^{i\Psi}$ with $\Psi\in [0,2\pi],$ where the starting point of $\partial\Bbb{D}$ is $w=1$ with $\arg w=0,$ and the end point of $\partial\Bbb{D}$ is $w=1$ with $\arg w=2\pi.$ From \eqref{eq3.116}-\eqref{eq3.118} we deduce that the composition
\begin{equation} \label{eq3.119}
w=\Phi_{\tilde{a}_{0,\tilde{r}}}(\tilde{z})=:\phi_{a_{0,r}}\circ\tilde{\phi}_{\tilde{a}_{0,\tilde{r}}}(\tilde{z})=\phi_{a_{0,r}} \left(e^{i\alpha_{0,2\varepsilon}}{\tilde{z}}^{\frac{\beta_{0,2\varepsilon}-\alpha_{0,2\varepsilon}}{\pi}}\right)
\end{equation}
 of the transformation \eqref{eq3.118} and the transformation \eqref{eq3.116} with $\Phi_{\tilde{a}_{0,\tilde{r}}}\left(\tilde{a}_{0,\tilde{r}}\right)=\phi_{a_{0,r}} (a_{0,r})=0$ for $\tilde{z}\in \tilde{D}_0$
  is a univalent and analytic transformation of $\tilde{D}_{0}$ onto the open unit disk $\Bbb{D}=\{w\in\Bbb{C}:|w|<1\},$ such that it maps the positive boundary $\tilde{\Gamma}_0=\tilde{L}_0+\tilde{\gamma}_0$ of $\tilde{D}_0$ into the positive boundary $\partial\Bbb{D}$ of the open unit disk $\Bbb{D}$ mentioned above in a one-to-one mapping manner. Noting that the transformation of \eqref{eq3.118} is a univalent and analytic mapping of $D_{0,2\varepsilon}$ onto the open unit disk $\Bbb{D}=\{w\in\Bbb{C}:|w|<1\}.$ Combining this with the theorem on the boundary behavior of conformal mappings, we see that the function $\Phi_{\tilde{a}_{0,\tilde{r}}}(\tilde{z})$
\eqref{eq3.119} is continuous on $\tilde{D}_0,$ and $|\Phi_{\tilde{a}_{0,\tilde{r}}}(\tilde{\zeta})|=1$ for $\tilde{\zeta}\in \tilde{\Gamma}_0= \partial\tilde{D}_0.$ Next we use the similar reasoning of proving \eqref{eq2.7} in Remark \ref{Remark2.5X2} to have
 \begin{equation}\label{eq3.120}
 G_{D_{0,2\varepsilon}}(z,a_{0,r})=-\log|\phi_{a_{0,r}} (z)| \, \, \text {for each} \, \, z\in \overline{D}_{0,2\varepsilon} \setminus\{a_{0,r}\},
 \end{equation}
\begin{equation}\label{eq3.121}
\frac{\phi'_{a_{0,r}}(\zeta)}{\phi_{a_{0,r}}(\zeta)}d\zeta=i\frac{\partial G_{D_{0,2\varepsilon}}(\zeta,a_{0,r})}{\partial \mathfrak{n}}ds \, \, \text{for each} \, \, \zeta\in \Gamma_0=\partial D_{0,2\varepsilon},
\end{equation}
\begin{equation}\label{eq3.122}
G_{\tilde{D}_0}\left(\tilde{z},\tilde{a}_{0,\tilde{r}}\right)=-\log|\Phi_{\tilde{a}_{0,\tilde{r}}}(\tilde{z})| \, \, \text {for each} \, \, \tilde{z}\in \overline{D}_0 \setminus\{\tilde{a}_{0,\tilde{r}}\}
 \end{equation}
and
\begin{equation}\label{eq3.123}
\frac{\Phi'_{\tilde{a}_{0,\tilde{r}}}(\tilde{\zeta})}{\Phi_{\tilde{a}_{0,\tilde{r}}}(\tilde{\zeta})}d\tilde{\zeta}
=i\frac{\partial G_{\tilde{D}_0}\left(\tilde{\zeta},\tilde{a}_{0,\tilde{r}}\right)}{\partial \mathfrak{\tilde{n}}}d\tilde{s}
\end{equation}
for each point $\tilde{\zeta}=\tilde{R}e^{i\tilde{\theta}}$ with $\tilde{\theta}=\arg\tilde{\zeta}$ and $0<\tilde{\theta}< \pi$ on the semi-circle
$\partial \tilde{D}_{0}=\tilde{\Gamma}_0=\{\tilde{\zeta}\in\Bbb{C}:\tilde{\zeta}=\tilde{R}e^{i\tilde{\theta}} \, \, \text{with} \, \, 0<\tilde{\theta}<\pi\}.$
\vskip 2mm
\par Since the transformation \eqref{eq3.116} is a univalent and analytic transformation of $D_{0,2\varepsilon}$ onto $\tilde{D}_0$ such that \eqref{eq3.117} holds, we have from \eqref{eq3.116}, \eqref{eq3.118}-\eqref{eq3.120}, \eqref{eq3.122} and Lemma \ref{Lemma2.17} that the Green's functions $G_{\tilde{D}_0}(\tilde{z},\tilde{a}_{0,\tilde{r}})$ and $G_{D_{0,2\varepsilon}}(z,a_{0,r})$ for $\tilde{D}_0$ and $D_{0,2\varepsilon}$ with singularities $\tilde{a}_{0,\tilde{r}}$ and $a_{0,r}$ respectively satisfy the relation
\begin{equation}\label{eq3.124}
\begin{aligned}
G_{\tilde{D}_0}\left(\tilde{z},\tilde{a}_{0,\tilde{r}}\right)&=G_{D_{0,2\varepsilon}}\left(\tilde{\phi}_{\tilde{a}_{0,\tilde{r}}}(\tilde{z}), a_{0,r}\right) =G_{D_{0,2\varepsilon}}\left(e^{i\alpha_{0,2\varepsilon}}{\tilde{z}}^{\frac{\beta_{0,2\varepsilon}-\alpha_{0,2\varepsilon}}{\pi}}, a_{0,r}\right)\\
&=G_{D_{0,2\varepsilon}}\left(z, a_{0,r}\right)=-\log\left|\phi_{a_{0,r}} \left(e^{i\alpha_{0,2\varepsilon}}{\tilde{z}}^{\frac{\beta_{0,2\varepsilon}-\alpha_{0,2\varepsilon}}{\pi}}\right)\right|
\end{aligned}
\end{equation}
for each $\tilde{z} \in \overline{\tilde{D}}_0\setminus\{\tilde{a}_{0,\tilde{r}}\}.$
 Next we use \eqref{eq3.117}, \eqref{eq3.119}-\eqref{eq3.124} and the invariance of first order differential form to deduce
 \begin{equation}\label{eq3.125}
\frac{\partial G_{D_{0,2\varepsilon}}(\zeta,a_{0,r})}{\partial \mathfrak{n}}ds=\frac{\partial G_{\tilde{D}_0}\left(\tilde{\zeta},\tilde{a}_{0,\tilde{r}}\right)}{\partial \mathfrak{\tilde{n}}}d\tilde{s}
\end{equation}
with $\zeta=e^{i\alpha_{0,2\varepsilon}}{\tilde{\zeta}}^{\frac{\beta_{0,2\varepsilon}-\alpha_{0,2\varepsilon}}{\pi}}$ for each point $\tilde{\zeta}=\tilde{R}e^{i\tilde{\theta}}$ with $\tilde{\theta}=\arg\tilde{\zeta}$ and $0<\tilde{\theta}< \pi$ on the semi-circle
$\partial \tilde{D}_{0}=\tilde{\Gamma}_0=\{\tilde{\zeta}\in\Bbb{C}:\tilde{\zeta}=\tilde{R}e^{i\tilde{\theta}} \, \, \text{with} \, \, 0<\tilde{\theta}<\pi\}.$
From \eqref{eq3.110}, \eqref{eq3.113}, \eqref{eq3.114}, \eqref{eq3.123}, \eqref{eq2.12} in Remark \ref{Remark2.5X2}.1 and Goldberg-Ostrovskii \cite[p.306]{Goldberg1970} we have
\begin{equation}\label{eq3.126}
\begin{aligned}
&\quad \frac{\partial G_{\tilde{D}_0}\left(\tilde{\zeta},\tilde{a}_{0,\tilde{r}}\right)}{\partial \mathfrak{\tilde{n}}}d\tilde{s}=-i\left(\log \Phi_{\tilde{a}_{0,\tilde{r}}}(\tilde{\zeta})\right)'d\tilde{s}=\\
&-i\left(\log\frac{\tilde{R}^2-\overline{\tilde{a}_{0,\tilde{r}}}\tilde{\zeta}}{\tilde{R}(\tilde{\zeta}-\tilde{a}_{0,\tilde{r}})}
 \cdot\frac{\tilde{R}(\tilde{\zeta}-\overline{\tilde{a}_{0,\tilde{r}}})}{\tilde{R}^2-\tilde{a}_{0,\tilde{r}}\tilde{\zeta}} \right)'d\tilde{s}
  =\left(\frac{\tilde{R}^2-|\tilde{a}_{0,\tilde{r}}|^2}{|\tilde{\zeta}-\tilde{a}_{0,\tilde{r}}|^2}-\frac{\tilde{R}^2-|\tilde{a}_{0,\tilde{r}}|^2}{|\tilde{\zeta}-\overline{a}|^2}\right)d\tilde{\theta}\\
 &=\frac{4(\tilde{R}^2-|\tilde{a}_{0,\tilde{r}}|^2)\tilde{R}|\tilde{a}_{0,\tilde{r}}|\sin\tilde{\theta}_0 \sin\tilde{\theta}}{(\tilde{R}^2+|\tilde{a}_{0,\tilde{r}}|^2-2\tilde{R}|\tilde{a}_{0,\tilde{r}}|
 \cos(\tilde{\theta}_0-\tilde{\theta}))((\tilde{R}^2+|\tilde{a}_{0,\tilde{r}}|^2 -2\tilde{R}|\tilde{a}_{0,\tilde{r}}|\cos(\tilde{\theta}_0+\tilde{\theta}))}d\tilde{\theta}\\
&\geq \frac{4(\tilde{R}^2-\tilde{r}^2)\tilde{R}\tilde{r}\sin\tilde{\theta}}{(\tilde{R}+\tilde{r})^4}\left(\sin\frac{\pi \varepsilon}{\beta_{0,2\varepsilon}-\alpha_{0,2\varepsilon}}\right)d\tilde{\theta}
 \end{aligned}
  \end{equation}
for each point $\tilde{\zeta}=\tilde{R}e^{i\tilde{\theta}}$ with $\tilde{\theta}=\arg\tilde{\zeta}$ and $0<\tilde{\theta}< \pi$ on the semi-circle
$\tilde{\Gamma}_0=\{\tilde{\zeta}\in\Bbb{C}:\tilde{\zeta}=\tilde{R}e^{i\tilde{\theta}} \, \, \text{with} \, \, 0<\tilde{\theta}<\pi\}$
 and the point
\begin{equation}\label{eq3.127}
\tilde{a}_{0,\tilde{r}}=\tilde{r}e^{i\tilde{\theta}_0 } \, \, \text{with} \, \, \tilde{\theta}_0=\arg \tilde{a}_{0,\tilde{r}} \, \, \text{and} \, \,
\, \, \frac{\pi \varepsilon}{\beta_{0,2\varepsilon}-\alpha_{0,2\varepsilon}}\leq \tilde{\theta}_0\leq \pi-\frac{\pi \varepsilon}{\beta_{0,2\varepsilon}-\alpha_{0,2\varepsilon}}.
\end{equation}
Here and in what follows
\begin{equation}\label{eq3.128}
\tilde{r}=\left(\frac{r}{2}\right)^{\frac{\pi}{\beta_{0,2\varepsilon}-\alpha_{0,2\varepsilon}}}
=\left(\frac{1}{2}\right)^{\frac{\pi}{\beta_{0,2\varepsilon}-\alpha_{0,2\varepsilon}}}\tilde{R} \quad  \text{for} \, \, r\in (0, +\infty)\setminus  F.
\end{equation}
From \eqref{eq3.104}, \eqref{eq3.111}, \eqref{eq3.117}, \eqref{eq3.125}, \eqref{eq3.126}-\eqref{eq3.128}, the supposition $f(a_{0,r})\not\in\{a_1,a_2,a_3,\infty\},$ the definitions of the proximity function and the Nevanlinna characteristic function of $f$ with the center at $a_{0,r}$ for $D_{0,2\varepsilon}$ for $r\in (0, +\infty)\setminus  F$ and $r\geq \mathcal{R}_0$ we deduce
\begin{equation}\nonumber
\begin{aligned}
  &\quad T(D_{0,2\varepsilon},a_{0,r},f)\geq m(D_{0,2\varepsilon},a_{0,r},f)\geq \frac{1}{2\pi}\int_{\gamma_{0,2\varepsilon}}\log^{+}|f(\zeta)|\frac{\partial G_{D_{0,2\varepsilon}}(\zeta,a_{0,r})}{\partial \mathfrak{n}}ds\\
&= \frac{1}{2\pi}\int_{\tilde{\gamma}_0}\log^{+}|f\left(\tilde{\phi}_{\tilde{a}_{0,\tilde{r}}}(\tilde{\zeta})\right)|\frac{\partial G_{\tilde{D}_0}(\tilde{\zeta},\tilde{a}_{0,\tilde{r}})}{\partial \mathfrak{\tilde{n}}}d\tilde{s}\\
&\geq \frac{1}{2\pi}\frac{4(\tilde{R}^2-\tilde{r}^2)\tilde{R}\tilde{r}}{(\tilde{R}+\tilde{r})^4}\left(\sin\frac{\pi \varepsilon}{\beta_{0,2\varepsilon}-\alpha_{0,2\varepsilon}}\right)
\int_{0}^{\pi}
 \log^{+}\left|f\left(\tilde{\phi}_{\tilde{a}_{0,\tilde{r}}}\left(\tilde{R}e^{i\tilde{\theta}}\right)\right)\right|\sin\tilde{\theta}d\tilde{\theta}\\
 &\geq \frac{1}{2\pi}\frac{4(\tilde{R}^2-\tilde{r}^2)\tilde{R}\tilde{r}}{(\tilde{R}+\tilde{r})^4}\left(\sin\frac{\pi \varepsilon}{\beta_{0,2\varepsilon}-\alpha_{0,2\varepsilon}}\right)
\int_{\varepsilon}^{\pi-\varepsilon}
 \log^{+}\left|f\left(\tilde{\phi}_{\tilde{a}_{0,\tilde{r}}}\left(\tilde{R}e^{i\tilde{\theta}}\right)\right)\right|\sin\tilde{\theta}d\tilde{\theta}\\
 \end{aligned}
\end{equation}
\begin{equation*}
\begin{aligned}
  &\geq \frac{1}{2\pi}\frac{4(\tilde{R}^2-\tilde{r}^2)\tilde{R}\tilde{r}\sin\varepsilon}{(\tilde{R}+\tilde{r})^4}\left(\sin\frac{\pi \varepsilon}{\beta_{0,2\varepsilon}-\alpha_{0,2\varepsilon}}\right)
\int_{\varepsilon}^{\pi-\varepsilon}
 \log^{+}\left|f\left(\tilde{\phi}_{\tilde{a}_{0,\tilde{r}}}\left(\tilde{R}e^{i\tilde{\theta}}\right)\right)\right|d\tilde{\theta}\\
 &= \frac{1}{2\pi}\frac{4(\tilde{R}^2-\tilde{r}^2)\tilde{R}\tilde{r}\sin\varepsilon}{(\tilde{R}+\tilde{r})^4}\left(\sin\frac{\pi \varepsilon}{\beta_{0,2\varepsilon}-\alpha_{0,2\varepsilon}}\right)\int_{\alpha_{0,2\varepsilon}+\varepsilon_0(\varepsilon)}
 ^{\beta_{0,2\varepsilon}-\varepsilon_0(\varepsilon)}
 \log^{+}\left|f\left(re^{i\theta}\right)\right|d\theta,\\
\end{aligned}
\end{equation*}
i.e.,
\begin{equation} \label{eq3.129}
\begin{aligned}
 &\quad T(D_{0,2\varepsilon},a_{0,r},f)\\
   &\geq \frac{1}{2\pi}\frac{4(\tilde{R}^2-\tilde{r}^2)\tilde{R}\tilde{r}\sin\varepsilon}{(\tilde{R}+\tilde{r})^4}\left(\sin\frac{\pi \varepsilon}{\beta_{0,2\varepsilon}-\alpha_{0,2\varepsilon}}\right)\int_{\alpha_{0,2\varepsilon}+\varepsilon_0(\varepsilon)}
 ^{\beta_{0,2\varepsilon}-\varepsilon_0(\varepsilon)}
 \log^{+}\left|f\left(re^{i\theta}\right)\right|d\theta
\end{aligned}
\end{equation}
for the small positive number $\varepsilon$ satisfying $0<\varepsilon<\frac{\pi}{8n}$ and the positive number $r\in(0, +\infty)\setminus  F$ and $r\geq \mathcal{R}_0.$ Here and in what follows,
\begin{equation}\label{eq3.130}
\varepsilon_0(\varepsilon)=\frac{\varepsilon}{\omega_{0,2\varepsilon}}=\frac{(\beta_{0,2\varepsilon}-\alpha_{0,2\varepsilon})\varepsilon}{\pi}.
\end{equation}
Next we denote by $S_{\alpha_{0,2\varepsilon+\varepsilon_0(\varepsilon)}, \beta_{0,2\varepsilon+\varepsilon_0(\varepsilon)}}(r,f)$
the Nevanlinna$'$s angular characteristic function on the angular domain
\begin{equation}\label{eq3.131}
\overline{\Omega}(\alpha_{0,2\varepsilon+\varepsilon_0(\varepsilon)},\beta_{0,2\varepsilon+\varepsilon_0(\varepsilon)})
=\left\{z\in\Bbb{C}:\alpha_{0,2\varepsilon+\varepsilon_0(\varepsilon)}\leq \arg z\leq \beta_{0,2\varepsilon+\varepsilon_0(\varepsilon)}\right\}
\end{equation}
 with
  \begin{equation}\label{eq3.132}
 \alpha_{0,2\varepsilon+\varepsilon_0(\varepsilon)}=\alpha_0+2\varepsilon+\varepsilon_0(\varepsilon) \, \, \text{and} \, \, \beta_{0,2\varepsilon+\varepsilon_0(\varepsilon)}= \beta_0-2\varepsilon-\varepsilon_0(\varepsilon).
 \end{equation}
 Next we use \eqref{eq3.131}, \eqref{eq3.132} and the line of the reasoning for proof of \eqref{eq3.79} in Subcase 2.1 of the proof of  Claim 2I to deduce
\begin{equation}\label{eq3.133}
S_{\alpha_{0,2\varepsilon+\varepsilon_0(\varepsilon)}, \beta_{0,2
\varepsilon+\varepsilon_0(\varepsilon)}}(r,f)=O(1)
\end{equation}
for the positive number $r$ such that $r\geq \mathcal{R}_0,$ say. From \eqref{eq3.104}, \eqref{eq3.131}-\eqref{eq3.133} and Lemma \ref{Lemma2.13}, we deduce that there exists some positive constant $c_1$ and there exists some set $F\subset (0, +\infty)$ of finite logarithmic measure, such that when $r\not\in F$ and $r\rightarrow\infty,$ the relation
\begin{equation}\label{eq3.134}
\begin{aligned}
\log |f(re^{i\theta})|
&=c_1r^{\omega_{0,2\varepsilon+\varepsilon_0(\varepsilon)}}
\sin\left(\omega_{0,2\varepsilon+\varepsilon_0(\varepsilon)}\left(\theta-\alpha_{0,2\varepsilon+\varepsilon_0(\varepsilon)}\right)\right)
+o\left(r^{\omega_{0,2\varepsilon+\varepsilon_0(\varepsilon)}}\right)\\
\end{aligned}
\end{equation}
holds uniformly in $\theta \in \left\{ \theta\in [0,2\pi):\alpha_{0,2\varepsilon+\varepsilon_0(\varepsilon)}\leq\theta\leq \beta_{0,2\varepsilon+\varepsilon_0(\varepsilon)}\right\}.$ Here and in what follows
\begin{equation}\label{eq3.135}
\omega_{0,2\varepsilon+\varepsilon_0(\varepsilon)}
=\frac{\pi}{\beta_{0,2\varepsilon+\varepsilon_0(\varepsilon)}-\alpha_{0,2\varepsilon+\varepsilon_0(\varepsilon)}}
=\frac{\pi}{\beta_{0,2\varepsilon}-\alpha_{0,2\varepsilon}-2\varepsilon_0(\varepsilon)}.
\end{equation}
Similarly, from Lemma \ref{Lemma2.13} and \eqref{eq3.79} for $j=0$ we deduce that there exists some positive constant $c_0$ and there exists some set $F\subset (0, +\infty)$ of finite logarithmic measure, such that the relation
\begin{equation}\label{eq3.136}
\begin{aligned}
\log |f(re^{i\theta})|
&=c_0r^{\omega_{0,2\varepsilon}}
\sin\left(\omega_{0,2\varepsilon}\left(\theta-\alpha_{0,2\varepsilon}\right)\right)
+o\left(r^{\omega_{0,2\varepsilon}}\right)\\
\end{aligned}
\end{equation}
holds uniformly in $\theta \in \left\{ \theta\in [0,2\pi):\alpha_{0,2\varepsilon}\leq\theta\leq \beta_{0,2\varepsilon}\right\},$ when $r\not\in F$ and $r\rightarrow\infty.$ Here and in what follows, $\omega_{0,2\varepsilon}$ is defined as \eqref{eq3.81} for $j=0.$ From \eqref{eq3.110}, \eqref{eq3.136} and the supposition $f(a_{0,r})\not\in\{a_1,a_2,a_3,\infty\}$ we deduce
\begin{equation}\label{eq3.137}
\frac{c_0}{2}\sin\frac{\pi\varepsilon}{\beta_{0,2\varepsilon}-\alpha_{0,2\varepsilon}}\left(\frac{r}{2}\right)^{\omega_{0,2\varepsilon}}\leq \log |f(a_{0,r})|
\leq 2c_0\left(\frac{r}{2}\right)^{\omega_{0,2\varepsilon}},
\end{equation}
 as $r/2\not\in F,$ $f(a_{0,r})\not\in\{a_1,a_2,a_3,\infty\}$ and $r\rightarrow\infty.$ On the other hand, from \eqref{eq3.129}, \eqref{eq3.132}, \eqref{eq3.134} and \eqref{eq3.135} we have
\begin{equation}\label{eq3.138}
\begin{aligned}
 & \quad T(D_{0,2\varepsilon},a_{0,r},f)\\
  &\geq \frac{1}{2\pi}\frac{4(\tilde{R}^2-\tilde{r}^2)\tilde{R}\tilde{r}\sin\varepsilon}{(\tilde{R}+\tilde{r})^4}\left(\sin\frac{\pi \varepsilon}{\beta_{0,2\varepsilon}-\alpha_{0,2\varepsilon}}\right)\int_{\alpha_{0,2\varepsilon}+\varepsilon_0(\varepsilon)}
 ^{\beta_{0,2\varepsilon}-\varepsilon_0(\varepsilon)}
 \log^{+}\left|f\left(re^{i\theta}\right)\right|d\theta\\
  &\geq  \frac{1}{2\pi}\frac{4(\tilde{R}^2-\tilde{r}^2)\tilde{R}\tilde{r}\sin\varepsilon}{(\tilde{R}+\tilde{r})^4}\left(\sin\frac{\pi \varepsilon}{\beta_{0,2\varepsilon}-\alpha_{0,2\varepsilon}}\right)\int_{\alpha_{0,2\varepsilon}+\varepsilon_0(\varepsilon)+\varepsilon}
 ^{\beta_{0,2\varepsilon}-\varepsilon_0(\varepsilon)-\varepsilon}
 \log^{+}\left|f\left(re^{i\theta}\right)\right|d\theta\\
 &\geq  \frac{1}{2\pi}\frac{4(\tilde{R}^2-\tilde{r}^2)\tilde{R}\tilde{r}\sin\varepsilon}{(\tilde{R}+\tilde{r})^4}\left(\sin\frac{\pi \varepsilon}{\beta_{0,2\varepsilon}-\alpha_{0,2\varepsilon}}\right)\left(\sin
 \frac{\pi\varepsilon}{\beta_{0,2\varepsilon}-\alpha_{0,2\varepsilon}-2\varepsilon_0(\varepsilon)}\right)\\
 &\quad \times\int_{\alpha_{0,2\varepsilon}+\varepsilon_0(\varepsilon)+\varepsilon} ^{\beta_{0,2\varepsilon}-\varepsilon_0(\varepsilon)-\varepsilon}\frac{c_1r^{\omega_{0,2\varepsilon+\varepsilon_0(\varepsilon)}}}{2} d\theta\\
 &=\frac{1}{2\pi}\frac{4(\tilde{R}^2-\tilde{r}^2)\tilde{R}\tilde{r}\sin\varepsilon}{(\tilde{R}+\tilde{r})^4}\left(\sin\frac{\pi \varepsilon}{\beta_{0,2\varepsilon}-\alpha_{0,2\varepsilon}}\right)\left(\sin
 \frac{\pi\varepsilon}{\beta_{0,2\varepsilon}-\alpha_{0,2\varepsilon}-2\varepsilon_0(\varepsilon)}\right)\\
 &\quad \times\frac{\beta_{0,2\varepsilon}-\alpha_{0,2\varepsilon}-2\varepsilon_0(\varepsilon)-2\varepsilon}{2}
 c_1r^{\omega_{0,2\varepsilon+\varepsilon_0(\varepsilon)}},
\end{aligned}
\end{equation}
 as $r/2\not\in F,$ $r\not\in F,$ $f(a_{0,r})\not\in\{a_1,a_2,a_3,\infty\}$ and $r\rightarrow\infty.$ Last, from Lemma \ref{Lemma2.16} we have
\begin{equation}\label{eq3.139}
 T\left(D_{0,2\varepsilon},a_{0,r},f\right)
 \leq \sum\limits_{l=1}^{3}N\left(D_{0,2\varepsilon},a_{0,r},\frac{1}{f-a_l}\right)+S(D_{0,2\varepsilon},a_{0,r},f),
\end{equation}
where
\begin{equation} \label{eq3.140}
\begin{aligned}
S(D_{0,2\varepsilon},a_{0,r},f)&=m\left(D_{0,2\varepsilon},a_{0,r},\frac{f'}{f}\right)
+\sum\limits_{l=1}^{3}m\left(D_{0,2\varepsilon},a_{0,r},\frac{f'}{f-a_l}\right)\\
&\quad +3\left(\log^{+}\frac{6}{\delta_0}+\log^{+}\frac{\delta_0}{6}+\log2\right)+\log 6-\log|f'(a_{0,r})| \\
&\quad+\sum\limits_{l=1}^{3} \left(\log|f(a_{0,r})-a_l|+\varepsilon(a_l,D_{0,2\varepsilon})\right)
\end{aligned}
\end{equation}
with $\delta_0=\min\limits_{1\leq j< k \leq 3}|a_j-a_k|$ and $\varepsilon(a_l,D_{0,2\varepsilon})\leq \log^{+}|a_l|+\log 2$ for $l\in\{1,2,3\}.$ Noting that $G_{\tilde{D}_0}\left(\tilde{z},\tilde{a}_{0,\tilde{r}}\right)$ denotes the Green function for the half-disc
\begin{equation}\nonumber
\tilde{D}_0=\left\{z\in\Bbb{C}: |z|<\tilde{R} \, \,  \text{and} \, \, \text{Im}(\tilde{z})>0\right\},
\end{equation}
we use the formula of the Green function \eqref{eq2.11} for the half-disc
\begin{equation}\nonumber
D=\left\{z\in\Bbb{C}: |z|<R \, \,  \text{and} \, \, \text{Im}(z)>0\right\}
\end{equation}
in Remark \ref{Remark2.5X2}.1 to have
\begin{equation}\label{eq3.141}
\begin{split}
&\quad G_{\tilde{D}_0}\left(\tilde{z},\tilde{a}_{0,\tilde{r}}\right)=\log\left|\frac{\tilde{R}^2
-\overline{\tilde{a}_{0,\tilde{r}}}\tilde{z}}{\tilde{R}(\tilde{z}-\tilde{a}_{0,\tilde{r}})}
 \cdot\frac{\tilde{R}(\tilde{z}-\overline{\tilde{a}_{0,\tilde{r}}})}{\tilde{R}^2-\tilde{a}_{0,\tilde{r}}\tilde{z}} \right|\\
 &=\log\frac{1}{\left|\tilde{z}-\tilde{a}_{0,\tilde{r}}\right|} +\omega_{\tilde{D}_0}\left(\tilde{z},\tilde{a}_{0,\tilde{r}}\right)
  \end{split}
\end{equation}
with
\begin{equation}\label{eq3.142}
\omega_{\tilde{D}_0}\left(\tilde{z},\tilde{a}_{0,\tilde{r}}\right)=:\log\left|\frac{\left(\tilde{R}^2
-\overline{\tilde{a}_{0,\tilde{r}}}\tilde{z}\right)(\tilde{z}-\overline{\tilde{a}_{0,\tilde{r}}})}{\tilde{R}^2-\tilde{a}_{0,\tilde{r}}\tilde{z}}
\right|
\end{equation}
for each $\tilde{z}\in\tilde{D}_0\cup\partial{\tilde{D}_0}=\left\{z\in\Bbb{C}: |\tilde{z}|\leq \tilde{R} \, \,  \text{and} \, \, \text{Im}(\tilde{z})\geq 0\right\}\setminus\{\tilde{a}_{0,\tilde{r}}\}.$ From \eqref{eq3.112}, \eqref{eq3.124} and \eqref{eq3.141} we have
\begin{equation}\label{eq3.143}
G_{D_{0,2\varepsilon}}\left(\tilde{\phi}_{\tilde{a}_{0,\tilde{r}}}(\tilde{z}),a_{0,r}\right)
=G_{\tilde{D}_0}\left(\tilde{z},\tilde{a}_{0,\tilde{r}}\right)
=\log\frac{1}{\left|\tilde{z}-\tilde{a}_{0,\tilde{r}}\right|} +\omega_{\tilde{D}_0}\left(\tilde{z},\tilde{a}_{0,\tilde{r}}\right)
\end{equation}
for each $\tilde{z}\in\tilde{D}_0\cup\partial{\tilde{D}_0}\setminus\{ \tilde{a}_{0,\tilde{r}}\}$ with $z=e^{i\alpha_{0,2\varepsilon}}{\tilde{z}}^{\frac{\beta_{0,2\varepsilon}-\alpha_{0,2\varepsilon}}{\pi}}.$
From Claim 2I we see that there exist finitely many distinct zeros of $f-b_l$ with $l\in\{1,2,3\}$ in $D_{0,2\varepsilon},$ say
$b_{l,1}, b_{l,2}, \ldots,  b_{l,N_l} \, \,  \text{with} \, \, l\in\{1,2,3\},$
such that
\begin{equation}\nonumber
|b_{l,1}|<|b_{l,2}|<\ldots<|b_{l,N_l}| \, \,  \text{with} \, \, l\in\{1,2,3\},
\end{equation}
where $N_l=N_l(r,\varepsilon)$ is some positive integer that depends only upon the positive numbers $r$ and $\varepsilon.$ Next we suppose that the multiplicity of $ b_{l,j}$ of a zero of $f-a_l$ is equal to some positive integer $m_{l,j}$ for $1\leq j\leq N_l,$ $j\in\Bbb{Z}^{+}$ and $l\in\{1,2,3\}.$ Then, from \eqref{eq3.143}, \eqref{eq3.128}, the reasoning in Goldberg-Ostrovskii \cite[p.306]{Goldberg1970}, the supposition  $f(a_{0,r})\not\in\{a_1,a_2,a_3,\infty\}$ and the definition \eqref{eq2.4} of the counting function for the meromorphic function $f$ with the center at $a_{0,r}$ for $D_{0,2\varepsilon},$ we have
\begin{equation}\label{eq3.144}
\begin{aligned}
&\quad N\left(D_{0,2\varepsilon},a_{0,r}, \frac{1}{f-a_l}\right)\\
&=\sum\limits_{j=1}^{N_l} m_{l,j}G_{D_{0,\varepsilon}}\left(b_{l,j},a_{0,r}\right) +n(0,a_{0,r},f)\omega_{D_{0,\varepsilon}}\left(a_{0,r},a_{0,r}\right)\\
&=\sum\limits_{j=1}^{N_l} m_{l,j}G_{D_{0,\varepsilon}}\left(b_{l,j},a_{0,r}\right)=\sum\limits_{j=1}^{N_l} m_{l,j}G_{\tilde{D}_0}\left(\tilde{b}_{l,j},\tilde{a}_{0,\tilde{r}}\right)\\
\end{aligned}
\end{equation}
\begin{equation*}
\begin{aligned}
&=\sum\limits_{j=1}^{N_l} m_{l,j}\log\left|\frac{\tilde{R}^2-\overline{\tilde{a}_{0,\tilde{r}}}\tilde{b}_{l,j}}
{\tilde{R}(\tilde{b}_{l,j}-\tilde{a}_{0,\tilde{r}})}\cdot\frac{\tilde{R}(\tilde{b}_{l,j}-\overline{\tilde{a}_{0,\tilde{r}}})}{\tilde{R}^2-\tilde{a}_{0,\tilde{r}}\tilde{b}_{l,j}} \right|=O(1) \, \, \text{for} \, \, l\in\{1,2,3\},
\end{aligned}
\end{equation*}
as
$\tilde{r}=\left(\frac{r}{2}\right)^{\frac{\pi}{\beta_{0,2\varepsilon}-\alpha_{0,2\varepsilon}}}
=\left(\frac{1}{2}\right)^{\frac{\pi}{\beta_{0,2\varepsilon}-\alpha_{0,2\varepsilon}}}\tilde{R}\rightarrow\infty.$ Here and in what follows,
\begin{equation}\label{eq3.145}
b_{l,j}=\tilde{\phi}_{\tilde{a}_{0,\tilde{r}}}\left(\tilde{b}_{l,j}\right)
=e^{i\alpha_{0,2\varepsilon}}{\tilde{b}_{l,j}}^{\frac{\beta_{0,2\varepsilon}-\alpha_{0,2\varepsilon}}{\pi}}
\end{equation}
for $1\leq j\leq N_l,$ $j\in\Bbb{Z}^{+}$ and $l\in\{1,2,3\}.$ On the other hand, from Lemma \ref{Lemma2.18} we see that there exists some  set
$F\subset (1, +\infty)$ of finite logarithmic measure such that
\begin{equation} \label{eq3.146}
\left|\frac{f'(z)}{f(z)}\right|\leq |z|^{\rho-1+\varepsilon} \, \, \text{and} \, \, \left|\frac{f'(z)}{f(z)-a_l}\right|\leq |z|^{\rho-1+\varepsilon},
\end{equation}
 as $|z|\not\in F\cup [0,1]$ and $|z|\rightarrow\infty.$ From the formula (2.1.1) in Zheng \cite[p.27]{Zheng2009}, we have
\begin{equation} \label{eq3.147}
\frac{1}{2\pi}\int_{\partial{D_{0,2\varepsilon}}}\frac{\partial {G}_{D_{0,2\varepsilon}}(\zeta, z )}{\partial{\mathfrak{n}}}ds=1 \, \, \text{for each } \,  z\in D_{0,2\varepsilon}.
\end{equation}
From  \eqref{eq3.146}, \eqref{eq3.147} and the definitions of the proximity functions and the Nevanlinna characteristic functions of $\frac{f'}{f}$  and $\frac{f'}{f-a_l}$ for $l\in\{1,2,3\}$ with the center at $a_{0,r}$ for $D_{0,2\varepsilon},$ we have
\begin{equation} \label{eq3.148}
m\left(D_{0,2\varepsilon},a_{0,r},\frac{f'}{f}\right)+\sum\limits_{l=1}^{3}m\left(D_{0,2\varepsilon},a_{0,r},\frac{f'}{f-a_l}\right)
\leq 4(\rho-1+\varepsilon)\log r,
\end{equation}
as $r\not\in F\cup [0,1]$ and $r\rightarrow\infty.$  From \eqref{eq3.110} and \eqref{eq3.137} we deduce
\begin{equation}\label{eq3.149}
\begin{aligned}
&\quad \sum\limits_{l=1}^{3} \left(\log|f(a_{0,r})-a_l|+\varepsilon(a_l,D_{0,2\varepsilon})\right)\\
&\leq \sum\limits_{l=1}^{3} \left(\log^{+} (|f(a_{0,r})|+|a_l|)+\varepsilon(a_l,D_{0,2\varepsilon})\right)\leq 7c_0\left(\frac{r}{2}\right)^{\omega_{0,2\varepsilon}},
\end{aligned}
\end{equation}
 as $r/2\not\in F\cup [0,1],$ $f(a_{0,r})\not\in\{a_1,a_2,a_3,\infty\}$ and $r\rightarrow\infty.$ From Remark \ref{Remark2.3}, Lemma \ref{Lemma2.3} (i), the assumption $\rho(f)=\rho<\infty$ and the assumption that $f$ and $g$ share $a_1,$ $a_2,$ $a_3$ IM and $a_4$ CM we have
 \begin{equation}\nonumber
 N_0\left(r,\frac{1}{f'}\right)+N_0\left(r,\frac{1}{g'}\right)=O(\log r),
 \end{equation}
 which implies that $f$ and $g$ have finitely many critical values including $a_1,$ $a_2$ and $a_3$ possibly in the complex plane. Combining this with Lemma \ref{Lemma2.20} and the assumption $\rho(f)=\rho<\infty,$ we deduce that the set of finite singular values of the inverse function $f^{-1}$  is bounded. Therefore, from Lemma \ref{Lemma2.19} we see that there exist constants $L_0 > 0$ and $M _0> 0$ such that if $|z| > L_0$ and $|f(z)| > M_0$ then
\begin{equation}\label{eq3.150}
\left|z\frac{f'(z)}{f(z)}\right|\geq \frac{\log \left|f(z)/M_0\right|}{C_0},
\end{equation}
where  $C_0$ is a positive absolute constant, in particular independent of $f,$ $L_0$ and $M_0.$ From \eqref{eq3.110}, \eqref{eq3.137}, \eqref{eq3.150} and the supposition $f(a_{0,r})\not\in\{a_1,a_2,a_3,\infty\}$ we see that there exists some set $F\subset (1, +\infty)$ of finite logarithmic measure such that
\begin{equation}\label{eq3.151}
\left|\frac{1}{f'(a_{0,r})}\right|\leq \frac{c_0|a_{0,r}|}{\log\left|f(a_{0,r})\right|-\log M_0}\cdot\frac{1}{|f(a_{0,r})|} \, \, \text{with} \, \, l\in\{1,2,3\},
\end{equation}
 as $|a_{0,r}|=r/2\not\in F\cup [0,1],$ $f(a_{0,r})\not\in\{a_1,a_2,a_3,\infty\}$ and $r\rightarrow\infty.$ From \eqref{eq3.115} and \eqref{eq3.81} for $j=0$ we have $1<\omega_{0,2\varepsilon}<\infty.$  Combining this with \eqref{eq3.110}, \eqref{eq3.137} and \eqref{eq3.151} we deduce
\begin{equation}\label{eq3.152}
\begin{aligned}
&\quad\log^{+}\left|\frac{1}{f'(a_{0,r})}\right|\leq \log^{+}\frac{c_0|a_{0,r}|}{\left|\log\left|f(a_{0,r})\right|-\log M_0\right|}+\log^{+}\left|\frac{1}{f(a_{0,r})}\right|\\
&\leq\log^{+}\frac{\frac{c_0r}{2}}{\left|\frac{c_0}{2}\sin\frac{\pi\varepsilon}{\beta_{0,2\varepsilon}-\alpha_{0,2\varepsilon}}\left(\frac{r}{2}\right)^{\omega_{0,2\varepsilon}}-\log M_0\right|}=0,
\end{aligned}
\end{equation}
 as $|a_{0,r}|=r/2\not\in F\cup [0,1],$ $f(a_{0,r})\not\in\{a_1,a_2,a_3,\infty\}$ and $r\rightarrow\infty.$ From \eqref{eq3.110}, \eqref{eq3.137}, \eqref{eq3.152}, Lemma \ref{Lemma2.18} and the supposition $f(a_r)\not\in\{a_1,a_2,a_3,\infty\}$
we see that there exists some set $F\subset (1, +\infty)$ of finite logarithmic measure, such that
\begin{equation}\label{eq3.153}
\begin{aligned}
&\quad \left|\log|f'(a_{0,r})|\right|=\log^{+}|f'(a_{0,r})|+\log^{+}\left|\frac{1}{f'(a_{0,r})}\right|=\log^{+}|f'(a_{0,r})| \\
&\leq \log^{+}|f(a_{0,r})|+\log^{+}\left|\frac{f'(a_{0,r})}{f(a_{0,r})}\right|\leq 2c_0\left(\frac{r}{2}\right)^{\omega_{0,2\varepsilon}}+\log^{+}\left(\frac{r}{2}\right)^{\rho-1+\varepsilon}\\
&\leq 3c_0\left(\frac{r}{2}\right)^{\omega_{0,2\varepsilon}},
\end{aligned}
\end{equation}
as $|a_{0,r}|=r/2\not\in F\cup [0,1],$ $f(a_r)\not\in\{a_1,a_2,a_3,\infty\}$ and $r\rightarrow\infty.$ From \eqref{eq3.140}, \eqref{eq3.148}, \eqref{eq3.149}, \eqref{eq3.153} we have
\begin{equation} \label{eq3.154}
\begin{aligned}
S(D_{0,2\varepsilon},a_{0,r},f)&=4(\rho-1+\varepsilon)\log r+  3c_0\left(\frac{r}{2}\right)^{\omega_{0,2\varepsilon}}+7c_0\left(\frac{r}{2}\right)^{\omega_{0,2\varepsilon}}+O(1)\\
&\leq 11c_0\left(\frac{r}{2}\right)^{\omega_{0,2\varepsilon}},
\end{aligned}
\end{equation}
 as $|a_{0,r}|=r/2\not\in F\cup [0,1],$ $
 r\not\in F\cup [0,1],$ $f(a_{0,r})\not\in\{a_1,a_2,a_3,\infty\}$ and $r\rightarrow\infty.$ From \eqref{eq3.139}, \eqref{eq3.140}, \eqref{eq3.144}, \eqref{eq3.145}, \eqref{eq3.154} we have
\begin{equation} \label{eq3.155}
T(D_{0,2\varepsilon},a_{0,r},f)\leq 11c_0\left(\frac{r}{2}\right)^{\omega_{0,2\varepsilon}},
\end{equation}
as $|a_{0,r}|=r/2\not\in F\cup [0,1],$ $r\not\in F\cup [0,1],$ $f(a_{0,r})\not\in\{a_1,a_2,a_3,\infty\}$ and as $r\rightarrow\infty.$
From \eqref{eq3.128}, \eqref{eq3.138} and \eqref{eq3.155} we have
\begin{equation}\label{eq3.156}
\begin{aligned}
&\quad \frac{1}{2\pi}\frac{4(\tilde{R}^2-\tilde{r}^2)\tilde{R}\tilde{r}\sin\varepsilon}{(\tilde{R}+\tilde{r})^4}\left(\sin\frac{\pi \varepsilon}{\beta_{0,2\varepsilon}-\alpha_{0,2\varepsilon}}\right)\left(\sin
 \frac{\pi\varepsilon}{\beta_{0,2\varepsilon}-\alpha_{0,2\varepsilon}-2\varepsilon_0(\varepsilon)}\right)\\
 &\quad \times\frac{\beta_{0,2\varepsilon}-\alpha_{0,2\varepsilon}-2\varepsilon_0(\varepsilon)-2\varepsilon}{2}
 c_1r^{\omega_{0,2\varepsilon+\varepsilon_0(\varepsilon)}} \leq 11c_0\left(\frac{r}{2}\right)^{\omega_{0,2\varepsilon}},
\end{aligned}
\end{equation}
 as $|a_{0,r}|=r/2\not\in F\cup [0,1],$ $r\not\in F\cup [0,1],$ $f(a_{0,r})\not\in\{a_1,a_2,a_3,\infty\}$ and $r\rightarrow\infty.$ From \eqref{eq3.130}, \eqref{eq3.135}, \eqref{eq3.71} and \eqref{eq3.81} for $j=0$  we see that
 \begin{equation}\label{eq3.157}
  \omega_{0,2\varepsilon}<\omega_{0,2\varepsilon+\varepsilon_0(\varepsilon)}
 \end{equation}
 for any given small positive number $\varepsilon$ satisfying $0<\varepsilon<\frac{\pi}{8n}.$
 From \eqref{eq3.156}, \eqref{eq3.157} and \eqref{eq3.128} we derive a contradiction, when $|a_{0,r}|=r/2\not\in F\cup [0,1],$ $r\not\in F\cup [0,1],$ $f(a_{0,r})\not\in\{a_1,a_2,a_3,\infty\}$ and $r\rightarrow\infty.$ This completes the proof of Theorem \ref{Theorem1.1}.
\vskip 2mm
\par
\section{On a question of Gary G. Gundersen concerning the nonexistence of two distinct and non-constant meromorphic functions sharing three distinct values DM and a fourth value CM }
\vskip 2mm
\par In 1979, G.G.Gundersen proposed the following two questions in Gundersen \cite{Gundersen1979}:
\vskip 2mm
\par
\begin{question}\rm{}(\cite[p.458]{Gundersen1979}) \label{question4.1}
Do there exist two distinct and non-constant meromorphic functions that share two distinct values CM and share the other two distinct values DM, where the four values are four distinct values in the extended complex plane?
\end{question}
\vskip 2mm
\par
\begin{question}\rm{}(\cite[p.458]{Gundersen1979})\label{question4.2}
Do there exist two distinct and non-constant meromorphic functions that share one value CM and share the other three distinct values DM, where the four values are four distinct values in the extended complex plane?
\end{question}
\vskip 2mm
\par In 1983, Gundersen\cite[Theorem 1]{Gundersen1983} proved that if two distinct and non-constant meromorphic functions share two values CM and share two other values IM, where the four values are four distinct values in the extended complex plane, then the two meromorphic functions share all four values CM. This implies that Gundersen\cite[Theorem 1]{Gundersen1983} gave a negative answer to Question \ref{question4.1}. But Question \ref{question4.2} is still open by now. The following question is a special case of Question \ref{question4.2}:
\vskip 2mm
\par
\begin{question}\rm{}\label{question4.3}
Do there exist two distinct and non-constant entire functions that share three distinct finite values DM?
\end{question}
In this direction, we recall the following result in Mues \cite{Mues1989F1} that completely resolved Question \ref{question4.3}:
\vskip 2mm
\par
\begin{theorem}\rm{}(\cite[pp.109-117]{Mues1989F1})\label{TheoremI}
There do not exist two distinct and non-constant entire functions that share three distinct finite values $a_1,$ $a_2,$ $a_3$ DM.
\end{theorem}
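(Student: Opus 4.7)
The plan is to argue by contradiction: assume that $f$ and $g$ are two distinct non-constant entire functions that share three distinct finite values $a_1,a_2,a_3$ DM. Since $f$ and $g$ are entire, they share $a_4=\infty$ CM trivially, so in particular they share all four values IM with $a_4$ CM. Lemma \ref{lemma2.1} rules out the rational case, so $f$ and $g$ are transcendental, and Lemma \ref{Lemma2.2}(i) gives $T(r,f)=T(r,g)+S(r,f)$; in particular the orders of $f$ and $g$ coincide modulo this error.

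I would then split into the finite-order and infinite-order cases. If the common order of $f$ and $g$ is finite, Corollary \ref{corollary1.1} of the present paper applies at once and forces $f$ and $g$ to share $a_1,a_2,a_3$ CM. This directly contradicts the DM hypothesis, which by the Mues/Yang-Yi definition requires that at every common $a_j$-point the multiplicities of $f-a_j$ and $g-a_j$ differ. Hence only the infinite-order case needs original analysis.

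For that case the natural tool is Mues's auxiliary function
\begin{equation*}
\phi=\frac{f'g'(f-g)^{2}}{\prod_{j=1}^{3}(f-a_{j})(g-a_{j})},
\end{equation*}
recalled in Remark \ref{Remark2.2}. At a common $a_j$-point with multiplicities $m$ for $f$ and $m'\neq m$ for $g$, a local Taylor expansion shows that $f-g$ vanishes to order exactly $\min(m,m')\geq 1$, so the numerator vanishes to order $m+m'-2+2\min(m,m')$ while the denominator has pole of order $m+m'$. The net order $2(\min(m,m')-1)$ is non-negative, so $\phi$ is entire; expanding $\phi$ as a polynomial in the logarithmic derivatives $f'/(f-a_s)$ and $g'/(g-a_t)$ yields $m(r,\phi)=S(r,f)$, and combined with $N(r,\phi)=0$ this gives $T(r,\phi)=S(r,f)$. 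The common $a_j$-points with $\min(m,m')\geq 2$ are zeros of $\phi$ of order at least $2$, hence are absorbed by $T(r,\phi)+S(r,f)$ together with Lemma \ref{Lemma2.3}(ii). Substituting into Lemma \ref{Lemma2.2}(ii) with $\overline{N}(r,f)=0$, the argument reduces to bounding the counting function of the "one-sided simple" DM points, where $\min(m,m')=1$.

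The main obstacle is exactly this last class of DM points: since $\phi$ does not vanish there, the "$\phi$ is small" strategy alone fails to control them. To overcome this I would introduce a second auxiliary function comparing $f'/((f-a_j)(f-a_k))$ with $g'/((g-a_j)(g-a_k))$, whose vanishing does detect these one-sided simple DM points, and estimate its proximity function by the logarithmic-derivative lemma. In the finite-order regime this control is automatic via Lemma \ref{lemma2.4}, which is precisely the route Corollary \ref{corollary1.1} exploits; in the infinite-order regime the exceptional sets in the logarithmic-derivative lemma must be removed by combining the angular estimates of Lemmas \ref{lemma2.10}--\ref{Lemma2.13} with P\'olya peaks of the order of $f$ (Lemma \ref{Lemma2.14}), in the spirit of Case 2 of the proof of Theorem \ref{Theorem1.1}, ultimately yielding $2T(r,f)=S(r,f)$ and the desired contradiction.
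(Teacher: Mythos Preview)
The paper does not actually prove this statement; Theorem~\ref{TheoremI} is quoted from Mues \cite{Mues1989F1} as a known result, and no proof is supplied. The paper's own contribution (Theorem~\ref{Theorem1.1} and its Corollary~\ref{corollary1.1}) covers only the finite-order situation, and the infinite-order analogue is explicitly left open as Conjectures~\ref{Conjecture6.1} and~\ref{Conjecture6.2}. Your reduction of the finite-order case to Corollary~\ref{corollary1.1} is correct, but your plan for the infinite-order case has a genuine gap.

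Every tool you propose to invoke in infinite order in fact requires finite order. Lemma~\ref{Lemma2.12} is proved only under $\rho(f)<\infty$; without it the error term $R_{\alpha,\beta}(r,f)$ in Lemma~\ref{lemma2.10} is uncontrolled. Lemma~\ref{Lemma2.14} demands $\mu(f)<\infty$, but by Theorem~\ref{TheoremF} two distinct functions sharing three values IM and a fourth CM have regular growth, so in your setting $\mu(f)=\rho(f)=\infty$ and the required P\'olya peaks are unavailable. Most fundamentally, Case~2 of the proof of Theorem~\ref{Theorem1.1}, whose ``spirit'' you wish to follow, rests on the fact that the auxiliary $\varphi$ of \eqref{eq3.3} is rational and hence $P$ in \eqref{eq3.11} is a polynomial; this step uses $T(r,\varphi)=O(\log r)$, which comes from Lemma~\ref{lemma2.4} and is a finite-order statement. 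In infinite order there is no polynomial $P$ on which to base any angular decomposition, so the entire architecture you describe does not exist. Mues's original argument in \cite{Mues1989F1}, which is not reproduced in the paper, must therefore proceed along quite different lines from the ones you sketch.
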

\vskip 2mm
\par For the existence of three distinct and non-constant meromorphic functions sharing four distinct values IM, where the four distinct shared values are in the extended complex plane that are neither CM shared values nor DM shared values, we refer to Steinmetz \cite[Theorem 2]{Steinmetz1988}.
For the existence of two distinct and non-constant meromorphic functions sharing four distinct values DM, where the four distinct shared values are in the extended complex plane, we refer to  Reinders \cite[Theorem 1]{Reinders1992}.
\vskip 2mm
\par From Theorem \ref{Theorem1.1} we get the following result that gives a negative answer to Question \ref{question4.2} for the finite order meromorphic functions:
\vskip 2mm
\par
\begin{theorem}\rm{}\label{Theorem4.1}
There do not exist two distinct and non-constant finite order meromorphic functions that share $a_1,$ $a_2,$ $a_3$ DM and share $a_4$ CM, where
$a_1,$ $a_2,$ $a_3,$ $a_4$ are four distinct complex values in the extended complex plane.
\end{theorem}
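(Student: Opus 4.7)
The plan is to derive Theorem \ref{Theorem4.1} as an immediate consequence of Theorem \ref{Theorem1.1} by exploiting the fact that, under either the Gundersen definition or the Mues--Yang--Yi definition of DM sharing recalled in Section 1, DM sharing of a value $a$ implies IM sharing of $a$ but explicitly \emph{forbids} CM sharing of $a$ (the Gundersen definition requires at least one common $a$-point of different multiplicities, and the Mues--Yang--Yi definition requires that $f-a$ and $g-a$ have no common zero of the same multiplicity).

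Concretely, I would argue by contradiction. Suppose there exist two distinct non-constant meromorphic functions $f$ and $g$ of finite order such that $f$ and $g$ share $a_1,a_2,a_3$ DM and $a_4$ CM, where $a_1,a_2,a_3,a_4$ are four distinct values in the extended complex plane. Since DM sharing entails IM sharing, $f$ and $g$ share $a_1,a_2,a_3$ IM and $a_4$ CM, and one of $f,g$ has finite order (in fact both do). The hypotheses of Theorem \ref{Theorem1.1} are therefore satisfied, and we conclude that $f$ and $g$ share all four values $a_1,a_2,a_3,a_4$ CM.

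However, $f$ and $g$ sharing $a_1$ CM directly contradicts $f$ and $g$ sharing $a_1$ DM: by the definition recalled in Section 1, DM sharing of $a_1$ requires the existence of at least one common $a_1$-point at which $f$ and $g$ have different multiplicities (resp.\ that $f-a_1$ and $g-a_1$ have no common zero of the same multiplicity), whereas CM sharing of $a_1$ means every common $a_1$-point is taken with the same multiplicity by $f$ and $g$. This contradiction proves Theorem \ref{Theorem4.1}.

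There is essentially no obstacle beyond invoking Theorem \ref{Theorem1.1} and the incompatibility of CM and DM sharing for a single value; the entire content of Theorem \ref{Theorem4.1} is packaged in the finite-order resolution of Question \ref{Question1.2}. The only minor point worth emphasizing is that the argument works uniformly for both conventions of DM sharing described in Section 1, since CM sharing of $a_1$ is ruled out by either definition of DM sharing.
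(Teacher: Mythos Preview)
Your proposal is correct and matches the paper's approach exactly: the paper derives Theorem \ref{Theorem4.1} as an immediate consequence of Theorem \ref{Theorem1.1}, with no further argument given beyond the sentence ``From Theorem \ref{Theorem1.1} we get the following result.'' You have simply spelled out the one-line contradiction between DM and CM sharing that the paper leaves implicit.
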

\vskip 2mm
\par
From Example \ref{Example1.1} above and the following example we see that the number of the three distinct DM shared values and the number of one CM shared value in Theorem \ref{Theorem4.1}, in a sense, are best possible:
\vskip 2mm
\par
\begin{example} \rm{}(\cite[p.94]{Steinmetz1988}) \, Let $\mathcal{P}$ denote the Weierstrass $\mathcal{P}$-function with a pair of primitive periods $2\omega-\omega'$ and $\omega+\omega'.$ Then
\begin{equation}\nonumber
F(z)=\frac{\left(\mathcal{P}(z)-\mathcal{P}(\omega/2)\right)\left(\mathcal{P}(z)-\mathcal{P}(3\omega/2)\right)^2}{\mathcal{P}(z)-\mathcal{P}(\omega)}, \, \, G(z)=F(z+\omega)
\end{equation}
 and $H(z)=F(z+\omega')$ share the value $0,$ $a_1,$ $a_2,$ $\infty$ IM, where $a_1$ and $a_2$ are two distinct complex numbers such that $a_1/a_2$ with $a_1/a_2\neq -1$ is a cube root of unity, while $\omega$ and $\omega'$ are two complex numbers such that $2\omega-\omega'$ and $\omega+\omega'$ are linearly independent on $\Bbb{R}.$ That is to say, for each pair of real numbers $\lambda_1$ and $\lambda_2$ such that $|\lambda_1|+|\lambda_2|\neq 0,$ we have $\lambda_1(2\omega-\omega')+\lambda_2(\omega+\omega')\neq 0.$  Moreover, we can verify that the four values $0,$ $a_1,$ $a_2,$ $\infty$ are neither CM shared values nor DM shared values of $F(z),$ $G(z)$ and $H(z).$
\end{example}
\vskip 2mm
\par
\section {Concluding remarks}
\vskip 2mm
\par From Question \ref{Question1.1} and Theorem \ref{Theorem1.1} we propose the following conjecture:
\vskip 2mm
\par
\begin{conjecture}\rm{}\label{Conjecture6.1}
Suppose that $f$ and $g$ are two distinct transcendental meromorphic functions of infinite order, if $f$ and $g$ share $a_1,$ $a_2,$ $a_3$ IM and $a_4$ CM, where $a_1,$ $a_2,$ $a_3,$ $a_4$ are four distinct complex values in the extended complex plane, then $f$ and $g$ share the four values $a_1,$ $a_2,$ $a_3,$ $a_4$ CM.
\end{conjecture}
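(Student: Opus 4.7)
The plan is to parallel the structure of the proof of Theorem~\ref{Theorem1.1}, substituting the finite-order tools (principally Lemma~\ref{lemma2.4}) by rescaling arguments that reduce the problem to the finite-order case just established. First I would define $\varphi$ exactly as in \eqref{eq3.3}. The pole-cancellation analysis of Lemma~\ref{Lemma2.3}(i) together with the general logarithmic derivative estimate still yields $m(r,\varphi)+N(r,\varphi)=S(r,f)$, so integration produces the relation
\begin{equation*}
\frac{f'\,g(g-1)(g-c)}{g'\,f(f-1)(f-c)}=h,
\end{equation*}
where $h$ is meromorphic with $T(r,h'/h)=S(r,f)$ and $\overline{N}(r,h)+\overline{N}(r,1/h)=S(r,f)$, all zeros and poles of $h$ being simple and lying at the sparse exceptional set described after \eqref{eq3.8}. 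This is the infinite-order counterpart of \eqref{eq3.10}, with the exponential factor $e^{P}$ in the finite-order case now replaced by an arbitrary entire exponential.

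If $h$ itself is a small function of $f$, the Case~1 argument of Theorem~\ref{Theorem1.1} goes through with purely notational change: in the analogues of Subcases~1.1--1.3, the value of $h$ at a sequence of shared zeros forces $h\in\{1,1/k,k\}$ for some integer $2\le k\le 6$, and Lemma~\ref{lemma2.7} together with the Valiron--Mokhon'ko lemma then forces either $f\equiv g$ or $T(r,f)=S(r,f)$, either of which contradicts the hypotheses.

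The main difficulty is the Case~2 analogue, where $h$ has exponential growth of possibly infinite order, so Markushevich's Lemma~\ref{lemma2.8} and the subsequent angular partition of $\mathbb{C}$ used crucially in \eqref{eq3.65}--\eqref{eq3.71} are unavailable. My plan is to reduce this to the finite-order setting via the Rescaling Lemma~\ref{Lemma2.22}: if the spherical derivatives of $f,g$ are uniformly bounded on $\mathbb{C}$, Lemma~\ref{Lemma2.21} and Remark~\ref{Remark2.6} give $\rho(f)\le 2$, contradicting the infinite-order hypothesis. Otherwise one extracts non-constant meromorphic rescaling limits $\widehat f,\widehat g$ of order at most $2$ that share $a_1,a_2,a_3,a_4$ IM, and the preservation of local multiplicities under normal convergence away from limit poles upgrades the sharing of $a_4$ to CM. Theorem~\ref{Theorem1.1} then forces $\widehat f,\widehat g$ to share all four values CM, and Theorem~\ref{TheoremB} pins down their joint structure as a M\"obius pair with two Picard values and cross-ratio $-1$.

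The hard part --- and the reason Conjecture~\ref{Conjecture6.1} and Gundersen's original Question~\ref{Question1.1} remain open --- is lifting this local limiting CM sharing back to global CM sharing of $(f,g)$. Concretely, if $f$ and $g$ were to share $a_j$ IM but not CM for some $j\in\{1,2,3\}$, one would want to select the rescaling base points $\{z_m\}$ so that they concentrate at multiplicity-discrepancy points of $(f,g)$, so that the limit pair $(\widehat f,\widehat g)$ itself fails to share $a_j$ CM, contradicting the conclusion just obtained from Theorem~\ref{Theorem1.1}. Making such a selection rigorous requires showing that the set of multiplicity-discrepancy points has positive angular upper density in a suitable sector, a Baernstein/P\'olya-peak-type statement that can be attacked via Lemma~\ref{Lemma2.14} and Lemma~\ref{Lemma2.15} when the lower order $\mu(f)$ is finite, but which becomes genuinely intractable in the regime $\mu(f)=+\infty$, where the bulk of the open difficulty resides.
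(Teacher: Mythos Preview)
The statement you are attempting is Conjecture~\ref{Conjecture6.1}, which the paper explicitly poses as an \emph{open problem} in the concluding remarks; the paper gives no proof or proof sketch for it. So there is no ``paper's own proof'' to compare your proposal against, and your submission should be read as a strategy outline rather than a proof --- which, to your credit, is how you yourself frame the final paragraph.

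Your Case~1 adaptation is essentially sound: once one knows $T(r,\varphi)=S(r,f)$ and $\overline N(r,h)+\overline N(r,1/h)=S(r,f)$, the Lemma~\ref{Lemma2.6} argument (which holds with $S(r,f)$ in place of $O(\log r)$) forces $h$ to take a fixed rational value $q/p$ on a set of non--$S(r,f)$ counting function, and since one of $p,q$ must equal~$1$ by Lemma~\ref{Lemma2.3}(ii), the Subcase~1.1--1.3 machinery transfers. This part is not where the conjecture is hard.

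The genuine gaps are in your Case~2. First, even granting that the rescaling limits $\widehat f,\widehat g$ share $a_4$ CM (your Hurwitz argument for this is correct), you have not excluded $\widehat f\equiv\widehat g$; nothing in Lemma~\ref{Lemma2.22} prevents the two limits from coinciding, and in that case Theorem~\ref{Theorem1.1} tells you nothing. Second, and more seriously, the base points $z_m$ in the Rescaling Lemma are not at your disposal: they are dictated by where $f^{\#}+g^{\#}$ is unbounded, so you cannot simply ``select the rescaling base points so that they concentrate at multiplicity-discrepancy points.'' Turning a hypothetical failure of CM sharing into a specific unbounded-spherical-derivative sequence sitting on discrepancy points is precisely the missing bridge, and neither Lemma~\ref{Lemma2.14} nor Lemma~\ref{Lemma2.15} supplies it when $\mu(f)=\infty$. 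Your closing paragraph correctly locates the obstruction; the proposal is an honest reconnaissance of the problem rather than a proof.
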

\vskip 2mm
\par From Question \ref{question4.2} amd Theorem \ref{Theorem4.1} we propose the following conjecture:
\vskip 2mm
\par
\begin{conjecture}\rm{}\label{Conjecture6.2}
There do not exist two distinct and non-constant infinite order meromorphic functions that share $a_1,$ $a_2,$ $a_3$ DM and share $a_4$ CM, where
$a_1,$ $a_2,$ $a_3,$ $a_4$ are four distinct complex values in the extended complex plane.
\end{conjecture}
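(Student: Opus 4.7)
The cleanest route to Conjecture \ref{Conjecture6.2} is to derive it from Conjecture \ref{Conjecture6.1} in exactly the same manner as Theorem \ref{Theorem4.1} is derived from Theorem \ref{Theorem1.1}. Suppose for contradiction that $f$ and $g$ are two distinct non-constant infinite-order meromorphic functions that share $a_1,$ $a_2,$ $a_3$ DM and $a_4$ CM. By the definition of DM sharing, $f$ and $g$ then share $a_1,$ $a_2,$ $a_3$ IM and $a_4$ CM. Granting Conjecture \ref{Conjecture6.1}, $f$ and $g$ would share all four values CM, which contradicts the DM assumption that at least one shared $a_j$-point has different multiplicities with respect to $f$ and $g.$ Hence Conjecture \ref{Conjecture6.2} is a formal consequence of Conjecture \ref{Conjecture6.1}, and the true task is to settle the latter in the infinite-order regime.

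An alternative line of attack that attempts to avoid Gundersen's open question exploits the strictly stronger DM hypothesis from the outset. One would imitate the skeleton of the proof of Theorem \ref{Theorem1.1}: introduce the auxiliary meromorphic function $\varphi$ via \eqref{eq3.3}, derive the integrated identity \eqref{eq3.12}, and then use the DM condition to guarantee, for each $j\in\{1,2,3\}$ and each common $a_j$-point, that either $f'$ vanishes and $g'$ does not, or vice versa, or that the two derivatives vanish to different orders. This would localize the zeros and poles of the factor $Re^P$ appearing in \eqref{eq3.12} and, in the spirit of Cases 1 and 2 of the proof of Theorem \ref{Theorem1.1}, reduce the problem to an analysis of the exponential factor $e^P$ and the rational factor $R,$ now supplemented by the multiplicity asymmetry coming from DM sharing.

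The principal obstacle is that when $f$ and $g$ have infinite order one loses Lemma \ref{lemma2.4}, and therefore the reduction of $\varphi$ to a rational function (and of $P$ to a polynomial as in \eqref{eq3.65}) breaks down. Consequently, the polynomial structure that underlies the angular-domain machinery of Lemmas \ref{lemma2.8}--\ref{Lemma2.15} in Case 2 is not directly available. A natural response is to invoke the Steinmetz rescaling Lemma \ref{Lemma2.22} to extract limit functions $\hat{f},$ $\hat{g}$ of order at most $2$ (see Remark \ref{Remark2.6}) that share $a_1,$ $a_2,$ $a_3,$ $a_4$ IM, and then to apply Theorem \ref{Theorem1.1} to $\hat{f},$ $\hat{g}.$ The crux is whether the CM sharing of $a_4$ and, much more delicately, the DM sharing of $a_1,$ $a_2,$ $a_3$ persist under rescaling: zeros of differing multiplicities can coalesce in the limit, and the rescaling lemma as stated only preserves the IM structure. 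Devising a refinement of Lemma \ref{Lemma2.22} that tracks multiplicities and preserves the DM/CM distinction is, in my view, the decisive difficulty; absent such a mechanism, the rescaling strategy cannot by itself close the problem, and one is pushed back to resolving the full Conjecture \ref{Conjecture6.1}.
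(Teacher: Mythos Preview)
The statement you are addressing is Conjecture~\ref{Conjecture6.2}, which the paper explicitly poses as an \emph{open conjecture} in the concluding remarks; the paper supplies no proof whatsoever. There is therefore nothing in the paper to compare your attempt against.

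Your first paragraph is logically sound: Conjecture~\ref{Conjecture6.2} is indeed a formal consequence of Conjecture~\ref{Conjecture6.1}, by exactly the same implication that deduces Theorem~\ref{Theorem4.1} from Theorem~\ref{Theorem1.1}. The paper makes this parallel structure evident by placing the two conjectures side by side, but it does not claim to have resolved either one. So your reduction is correct but conditional on an open problem, and hence does not constitute a proof.

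Your second and third paragraphs are an honest and accurate diagnosis of why the paper's own method does not extend: the loss of Lemma~\ref{lemma2.4} in infinite order destroys the rationality of $\varphi$ and the polynomial form of $P$, which in turn disables the angular-domain estimates of Case~2. Your observation about the rescaling Lemma~\ref{Lemma2.22} is also apt: it preserves only IM sharing, not CM or DM, so one cannot simply pass to finite-order limits and invoke Theorem~\ref{Theorem1.1}. These are precisely the obstructions that leave the conjecture open, and the paper offers no mechanism to overcome them. In short, you have correctly identified both the trivial conditional reduction and the genuine gap, but neither you nor the paper has a proof.
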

\vskip 2mm
\par
\def\cprime{$'$}
\providecommand{\bysame}{\leavevmode\hbox to3em{\hrulefill}\thinspace}
\providecommand{\MR}{\relax\ifhmode\unskip\space\fi MR }
\providecommand{\MRhref}[2]{%
  \href{http://www.ams.org/mathscinet-getitem?mr=#1}{#2}
}
\providecommand{\href}[2]{#2}

\end{document}